\numberwithin{equation}{section}
\newcommand{\ds}{\displaystyle}
\newcommand{\dd}{\mathrm{d}}
\newcommand{\K}{\mathcal{K}}
\newtheorem{Theorem}{Theorem}[section]
\newtheorem{Proposition}{Proposition}[section]
\newtheorem{Lemma}{Lemma}[section]
\newtheorem{example}{Test case}[section]
\theoremstyle{definition}           
\newtheorem{defn}{Definition}[section]
\theoremstyle{remark}
\newtheorem{Remark}{Remark}[section]
\begin{document}

\title[Convergence-rate and error analysis of sectional-volume average method]{Convergence-rate and error analysis of sectional-volume average method  for the  collisional breakage equation with  multi-dimensional modelling}

\author{Prakrati Kushwah}
\address{Department of Mathematics, National Institute of Technology Tiruchirappalli, Tamil Nadu - 620 015, India.}
\email{kprakrati1256@gmail.com}

\author{Anupama Ghorai}
\address{Department of Mathematics, National Institute of Technology Tiruchirappalli, Tamil Nadu - 620 015, India.}
\email{anupamaghorai@gmail.com}
\author{Jitraj Saha}
\address{Department of Mathematics, National Institute of Technology Tiruchirappalli, Tamil Nadu - 620 015, India.}
\email{jitraj@nitt.edu (Corresponding author)}

\subjclass[2020]{ Primary: 34A12; 35Q70; 45K05; Secondary: 47J35.}



\keywords{Collisional nonlinear breakage,  volume average method, consistency, error estimates, two-dimensional extension}

\begin{abstract}
	Recent literature reports two sectional techniques, the finite volume method [Das et al., 2020, SIAM J. Sci. Comput., 42(6): B1570-B1598] and the fixed pivot technique [Kushwah et al., 2023, Commun. Nonlinear Sci. Numer. Simul., 121(37): 107244] to solve one-dimensional collision-induced nonlinear particle breakage equation. It is observed that both the methods become inconsistent over  random grids. Therefore, we propose a new birth modification strategy, where the newly born particles are proportionately allocated in three adjacent cells, depending upon the average volume in each cell. This modification technique improves the numerical model by making it  consistent over  random grids. A detailed convergence and error analysis   for this new scheme is studied over different  possible choices of grids such as uniform, nonuniform, locally-uniform, random and oscillatory grids. In addition, we have also identified the conditions upon kernels  for which the  convergence rate increases significantly and the scheme achieves second order of convergence over uniform, nonuniform and locally-uniform grids.	 The enhanced order of accuracy will enable the new model to be easily coupled with CFD-modules. Another significant advancement in the literature is done by extending the discrete model for two-dimensional equation over rectangular grids.
\end{abstract}

\maketitle

	\section{Introduction}
In disperse system,  particles are encountered by several physical processes such as  aggregation, breakage, nucleation, evaporation etc. The particle encounters can be induced by some external force or due to Brownian motion among the fellow particles. During such encounters, particles disintegrate into smaller fragments, thus leading to the breakage process.


Thus, particle volume  evolves in a closed system over  a period of time  and the equations representing these mechanisms are called breakage population balance equations (PBEs). PBEs are initial valued integro-partial differential equations.
Mathematical   model where breakage PBEs are induced by  external force is represented by a  linear equation \cite{saha2023rate}.  On the other hand, mathematical model for breakage PBEs, 
driven by the collisions between particles is  nonlinear equation \cite{MR4566091}. Note that the collisions between two mother particles can lead to an outcome in which one mother particle breaks into smaller fragments while  other remains unchanged, acting as a catalyst.
This study focuses on  
the collision-induced nonlinear breakage process  that has several  applications in real life.
The collision-induced breakage process can be experienced during the formation of raindrops and bubbles.  Prat et al. 
\cite{prat2012influence} have identified that during the formation of raindrop and bubbles, the collisions occur among the raindrops (which can  also be treated as particles) and raindrops disintegrate into smaller ones. They have  modeled this phenomena with the help of  the discrete  nonlinear  equation and
explored the solution (which is the raindrop size distribution) through   Monte Carlo simulations. 
On  a furthernote, the collisional nonlinear breakage equation plays crucial role for productions of powders with particular density distribution for drug formation and  to create specified sizes tablets from wet granulation process \cite{singh2022challenges2}.  Various other applications of such breakage model are found in  dense disperse system for fluidized beds \cite{levenspiel1968processing}, 
astrophysics \cite{piotrowski1953collisions}, the crushing or milling processing, mineral processing, material science and both batch and continuous granulation processes. 
\subsection{The continuous collisional  nonlinear breakage model}
In this article, we consider the continuous collision-induced nonlinear breakage equation which is first introduced by Cheng and Redner \cite{cheng1988scaling,cheng1990kinetics}.  The time evolution  of density distribution function $n(x,t)$ of particles of volume $x\geq0$  at time $t\geq0$ due to  the collision induced breakage process is governed by the following equation:  
\begin{equation}\label{1_1}
	\frac{\partial n(x,t)}{\partial t}=\int_{0}^{\infty} \int_{x}^{\infty} \beta(x|y;z) \mathcal{K}(y,z) n(y,t)n(z,t) \dd y \dd z- \int_{0}^{\infty} \mathcal{K}(x,y)n(x,t)n(y,t) \dd y,
\end{equation}
with the initial condition
\begin{equation}\label{1_2}
	n(x,0)=n_{0}(x)\geq0,\quad\text{for any}\quad x\geq0.
\end{equation}

Here, functions $\beta(x|y;z)$ and $\mathcal{K}(y,z)$ are  the  breakage distribution function and the collisional rate kernel respectively. 
The first term  on the right hand side (R.H.S.) of \eqref{1_1} is the birth term which represents the formation of  $x$ volume particles as a result of  collision between two particles of $y$ $(>x)$ and $z$ volume. The mother particle of $y$ volume  breaks into the smaller particle of volume $x$ whereas particle of  volume $z$  acts like a catalyst and remain unchanged during collision. The second term on R.H.S. of \eqref{1_1}  is the death term that accounts the depletion of particles of volume $x$  from the system.
In general, the collisional  kernel and the breakage distribution function  satisfies the following properties:\\
$(i)$  $\mathcal{K}(y,z)$ is nonnegative and symmetric with respect to the arguments $y$ and $z$ i.e. satisfies the conditions,
\begin{align}\label{1_3}
	0\le \mathcal{K}(y,z)=\mathcal{K}(z,y),\quad\text{for all}\quad y,z>0,
\end{align}
$(ii)$  $\beta(x|y;z)$ is nonnegative  satisfying  the condition  $\beta(x|y;z)=0 $ when $  x> y $ and the volume conserving property,
\begin{align}\label{1_4}
	\int_{0}^{y}x\beta(x|y;z)\dd x=y,\quad\text{for all}\quad  y, z>0,
\end{align}
and total number of fragments produced during a breakage event is calculated as  
\begin{align}\label{1_5}
	\int_{0}^{y}\beta(x|y;z)\dd x=\zeta(y,z),\quad\text{where}\quad2\leq\zeta(y,z)\le\bar{\zeta}<\infty,\quad\text{for all}\quad y,z>0.
\end{align}

For population balance equations, conservation of particles properties such as mass and number of particles of the system plays a key role to  determine the density evolution for certain kinetic rates. In this regard, some integral properties
of the density function play a crucial role to indicate several significant physical properties. 
For this purpose,  the $r^{\text{th}}$ order  moment is denoted by $\mathcal{M}_r(t)$ and  is defined by
\begin{align}\label{1_6}
	\mathcal{M}_r(t)=\int_{0}^{\infty}x^rn(x,t)\dd x \quad\text{for all}\quad t\geq0 \hspace{0.2cm}\text{and}\hspace{0.2cm} r=0,1,2,\dots...
\end{align}
Consider, a function $\varphi(x)$ of positive real numbers. Now, multiplying  the breakage equation \eqref{1_1} with $\varphi(x)$  on both sides and taking  integration over $x$, we obtain  the corresponding  moment equation for the collisional nonlinear breakage equation 
\begin{align}\label{1_7}
	\frac{\dd }{\dd t}\int_{0}^{\infty}\varphi(x)n(x,t)\dd x=\int_{0}^{\infty}\int_{0}^{\infty}\left(\int_{0}^{y}\varphi(x)\beta(x|y;z)\dd x-\varphi(y)\right)\K(y,z)n(y,t)n(z,t)\dd y \dd z.
\end{align}
Setting $\varphi(x)=1$ in  equation \eqref{1_7}  and using property \eqref{1_5}, we obtain the time evolution of zeroth  moment given as
\begin{align}\label{1_9}
	\frac{\dd \mathcal{M}_0(t) }{\dd t}	=\int_{0}^{\infty}\int_{0}^{\infty}\left[\zeta(y,z)-1\right]\K(y,z)n(y,t)n(z,t)\dd y \dd z.
\end{align}
Likewise, setting $\varphi(x)=x$ in  equation  \eqref{1_7} and by property \eqref{1_5},
we obtain  the time evolution of the first moment of the particles as
\begin{align*}
	\frac{\dd \mathcal{M}_1(t) }{\dd t}=0\hspace{0.2cm}\text{implies}\hspace{0.2cm} \mathcal{M}_1(t)=\mathcal{M}_1(0),\hspace{0.2cm}\text{for all}\hspace{0.2cm}t\geq0.
\end{align*}
provided all the integrals in the R.H.S. of equation \eqref{1_7} exists. Thus, for  a suitable choice of kinetic rates the volume conservation law holds appropriately.


\subsection{The state of art and motivation}
The evolution of raindrop  size distribution undergoing the collision induced nonlinear breakage process  \cite{vigil2006destructive,prat2012influence} is extensively used to describe the precipitation dynamics, weather modeling, and radar meteorology.  Moreover, several other applications in real life  and  industrial processes urge researchers  to explore more on the collisional nonlinear breakage equation. In literature, this equation is mostly theoretically analysed \cite{giri2021weak,giri2024continuous} and less numerically investigated.
Since achieving analytical solutions of the collisional nonlinear breakage equation is very challenging, so the primary motivation lies on securing efficient numerical solutions. 
In this regard,  various numerical methods including sectional  discretization methods \cite{MR4666144,MR4566091}, 
Monte Carlo simulations \cite{das2020approximate} and semi-analytical methods \cite{yadav2023homotopy} are implemented in the literature to  solve  the collisional nonlinear breakage equation. Among these numerical methods, the sectional discretization methods are formulated on the basis of distributing the particles in the representatives as well as neighboring  nodes (or pivots) for capturing the  required integral properties accurately. 
Monte Carlo method needs a significant large number of  data to achieve higher accuracy, thus involving a significant computational cost.  On the other hand, semi-analytical method  uses the analytical toolbox of the supporting software and hence very slow to produce results for higher iterations. Additionally, this method fails to give solution of any complex, nonlinear equations in closed form except specific conditions. Thus, the sectional discretization methods are efficient to address and overcome these limitations for generating results with better accuracy. In last few decades, these section based methods includes finite volume methods (FVM)  \cite{kumar2015development, das2024approximate}, fixed pivot techniques (FPT)   \cite{saha2023rate,kumar2008convergenceI} and  cell average techniques (CAT) \cite{kumar2008convergenceII,kushwah2024performance} to solve the linear  breakage equation. In recent literature,  there are  few works documented to  address the numerical treatment of the collisional nonlinear breakage equation, employing  sectional discretization  methods including FVM \cite{MR4666144} and FPT \cite{MR4566091} to solve the equation \eqref{1_1} but these existing schemes have its limitations. For  FVM, the allocation of the new born particles is restricted to a single pivot whereas in FPT,  two pivots are used for birth modification. Moreover, FVM   is highly acccurate for a particular choice of grids that is the  geometric, but for  other choice of grids it struggles with performance. Additionally, FVM fails on coarser grids. On the other hand, FPT offers some definite  improvement, yet it fails  over randomly generated grids. For overall accuracy, it  achieves first order convergence rate.
In this context, we propose an improved sectional discretization method based on particles  averaged volume in a particular cell   named as \emph{volume average method} (VAM). The cell allocation is done in  three nighboring cells based on the properties of particles volume average, which are expected to be preserved. The birth terms are modified in order to achieve consistency with total volume and number of particles. The nonnegative solution obtained from the numerical scheme and consistency of the scheme are examined in detail followed by a discussion on discretization error over different grid types. It is observed that VAM is first order convergent over uniform, nonuniform  and locally-uniform grids. Most significant  observation is that VAM shows  first order convergent over random grids.
We  also study the performance of VAM over  oscillatory grids which is first evidence where consistency analysis is reported for the collisional nonlinear breakage equation in the literature and this new method  achieves first order accurate. An important observation is discussed where  conditions depending upon the kernels for which the VAM shows second order convergence over uniform, nonuniform and locally-uniform grids and maintains first order accuracy over random and  oscillatory grids. The accuracy of the new scheme VAM is validated against the existing fixed pivot technique.   
It is worth mentioning here that the existing FPT  is inconsistent over random grids. Furthermore, we carry out the stability analysis  of the new scheme using Lipschitz criterion. The model is further extended for two-dimensional case over the rectangular grids.  For numerical results, two examples are solved, where the total particle properties and the particle hypervolume are calculated against their exact values.
Importantly, the results show that VAM performs better than the existing FPT.

The article is organized as follows: in section \ref{sec_2}, we describe the  cell adaptive mathematical formulation of VAM for the collisional nonlinear breakage equation \eqref{1_1}. Section \ref{sec_3} discusses  the convergence and consistency analysis. Two-dimensional model is presented in section \ref{sec_4} and  numerical discussion is done for different text problems in section \ref{sec_5}. The findings of the work are highlighted in section \ref{sec_6}.


\section{ The cell volume  average method}\label{sec_2}
We first truncate the continuous collisional breakage equation  \eqref{1_1} by considering  a finite computational domain as $\Lambda=[0,x_{\max}]$ with $x_{\max}<\infty$ : for all $x\in \Lambda$ and $t\geq0$ and the truncated equation \eqref{1_1} reads as
\begin{align}\label{2_1}
	\frac{\partial n(x,t)}{\partial t}=\int_{0}^{x_{\max}} \int_{x}^{x_{\max}} \beta(x|y;z) \mathcal{K}(y,z) n(y,t)n(z,t) \dd y \dd z- \int_{0}^{x_{\max}} \mathcal{K}(x,y)n(x,t)n(y,t) \dd y,
\end{align}
with the initial condition
\begin{align}\label{2_2}
	n(x,0)=n_{0}(x)\quad\text{for any}\quad x\in \Lambda.
\end{align}

We now discretize the volume domain  $\Lambda$ into $I (<\infty)$ discrete cells. Let $\Lambda_i$ denotes the $i^{\text{th}}$ cell, $\Lambda_i:=[x_{i-1/2}, x_{i+1/2}]$ with  $x_{1/2}:=0$ and $x_{I+1/2}:=x_{\max}$ and $\Delta x_i: = x_{i+1/2} - x_{i-1/2}$. A finitely discretized computational domain is shown in Figure \ref{D55}. The midpoint $x_i:= \ds \frac{x_{i+1/2} + x_{i-1/2} }{2}$ of $\Lambda_i$  is the  cell representatives or pivots. It is assumed that the particle property is concentrated at these pivots. 
Consider  	$\Delta x_i\le \Delta x_{i+1}$ and 	$\Delta x_{\text{min}}=\min\limits_{i}\Delta x_i\le\Delta x_i\le \Delta x=\max\limits_{i}\Delta x_i\hspace{0.2cm}\text{with}\hspace{0.2cm}\ds \frac{\Delta x}{\Delta_{\text{min}}}\le \alpha$, a constant, for all $ 1\le i\le I $.
Let $N_i(t)$ be the discrete number density in the $i^{\text{th}}$ cell at time $t$  defined by
\begin{align}\label{2_3}
N_i(t)=\int_{x_{i-1/2}}^{x_{i+1/2}}n(x,t)\dd x.
\end{align} 
Using  relation $\ds n(x,t)\approx \sum_{i=1}^{I}N_i(t) \delta(x-x_i)$  
in the truncated equation \eqref{2_1}, we obtain the collisional  nonlinear breakage equation as
\begin{align}\label{2_4}
\frac{\dd {N}_i(t)}{\dd t} = \hat{ B}_i(t) - \hat{D}_i(t),\quad\text{ with initial conditions}\quad N_i(0)=N_i^0,
\end{align}
for all	$i=1,2,\dots,I$.
Here, birth terms and death terms are defined as respectively
\begin{align}\label{2_5}
		\hat{B}_i(t)=\ds \sum_{k=1}^{I} \sum_{j=i}^{I} \beta_{i,j}^k \K(x_j,x_k) {N}_j(t) {N}_k(t)\quad\text{and}\quad
		\hat{D}_i(t)= \ds \sum_{j=1}^{I} \K(x_i,x_j){ N}_i(t)  {N}_j(t),
\end{align}	with
\begin{align}\label{2_6}
\beta_{i,j}^k= \int_{x_{i-1/2}}^{p_j^i} \beta(x|x_j;x_k)\dd x \quad \text{and} \quad  p_j^i= \left\{\begin{array}{ll} 
	x_i, & \quad \mbox{if}\quad i=j,\\
	x_{i+1/2}, & \quad \mbox{otherwise}.
\end{array}
\right.
\end{align}	
Note that positivity of all the terms appearing on the R.H.S. of $B_i(t)$ and $D_i(t)$ defines positivity  of the both  $B_i(t)$ and $D_i(t)$.
The $r^{th}$ order discrete moment  is 
defined as
$\hat{\mathcal{M}}_r(t)=\ds\sum_{i=1}^{I}x^r_iN_i(t)$, for all $r\in\mathbb{N}$.  Recalling equation \eqref{1_7} the discrete moment equation is written as 
\begin{align}\label{2_7}
\frac{\dd}{\dd t}\sum_{i=1}^{I}\varphi_i N_i(t)=	\sum_{k=1}^{I} \sum_{j=1}^{I}\K(x_j,x_k){ N}_j(t) {N}_k(t)\left(\sum_{i=1}^{j} \int_{x_{i-1/2}}^{p_i^j}\varphi_i \beta(x|x_j;x_k)\dd x-\varphi_j\right),
\end{align}
where $\left\{\varphi_i\right\}_{i\geq1}$ is a sequence of positive real numbers.
Consequently,	 setting $\varphi_i=1$ and $x_i$  in  equation \eqref{2_7},  the time  evolution of the discrete zeroth  moment  and first moment are written as respectively
\begin{align}\label{2_8}
\frac{\dd \hat{\mathcal{M}}_0(t)}{\dd t}=\sum_{k=1}^{I}\sum_{j=1}^{I}\K(x_j,x_k){N}_j(t){N}_k(t)\left[\zeta(x_j,x_k)-1\right],
\end{align}
and
\begin{align}\label{2_9}
\frac{\dd \hat{\mathcal{M}}_1(t)}{\dd t}=\sum_{k=1}^{I}\sum_{j=1}^{I}\K(x_j,x_k){N}_j(t){N}_k(t)\left(\sum_{i=1}^{j} \int_{x_{i-1/2}}^{p_i^j}x_i \beta(x|x_j;x_k)\dd x-x_j\right).
\end{align}
\begin{Remark}
The discrete formulation \eqref{2_4} is not consistent with  discrete  first order moment.\cite{das2020approximate}
\end{Remark}

Since the discrete formulation  \eqref{2_4}  fails to preserve the total mass of the particles in system, it is not suitable  to approximate the collisional  nonlinear breakage equation \eqref{1_1}. To overcome this limitation, we  formulate a new  numerical method which modifies the birth term  of equation \eqref{1_1} and conserves the total mass as well as number of the particles.

\subsection{Cell volume average   based birth rate modification}
To capture the birth rate of the particles more precisely, 
the daughter particles are assigned to the neighboring pivots  depending upon the position of the average volume $\bar{v}_i$ in $\Lambda_i$. In this context, to obtain the average volume,  the	discrete  volume flux over $\Lambda_i$  is defined as 
\begin{align}\label{2_10}
	\hat{V}_i(t)=\sum_{k=1}^{I} \sum_{j=i}^{I}  \K(x_j,x_k) {N}_j(t) {N}_k(t) \int_{x_{i-1/2}}^{p_j^i}x \beta(x|x_j;x_k)\dd x,
\end{align}
and recalling   $B_i(t)$  from equation \eqref{2_5} the volume average $\bar{v}_i$ of all newborn particles in  $\Lambda_i$  as 
\begin{align}\label{2_11}
	\bar{v}_i=\frac{\hat{V}_i(t)}{\hat{B}_i(t)},\quad\text{for all}\quad i=1,2,\dots,I.
\end{align}

There are two possibilities for the particle allocation in a cell as follows:
\begin{enumerate}
	\item (Less likely event) if the  average volume $\bar{v}_i$ of the particles in  $\Lambda_i$  matches with the volume of cell representative (happens very rarely) i.e. $\bar{v}_i=x_i$, then the total birth $B_i$ can be allocated to the node $x_i$. Here, all properties corresponding to  the  average volume of particles are  preserved trivially.
	
	\item (Most likely event) if  the  average volume $\bar{v}_i$ of the particles in $\Lambda_i$ does not match  with the volume of cell representative, that is either $\bar{v}_i>x_i$ or $\bar{v}_i<x_i$, then the particles are distributed to the neighboring pivots such that the total number of particles and mass are preserved.  Then, depending upon the position of average volume, we see the contribution of fractions of birth term in the neighboring pivots.
\end{enumerate}

\begin{figure}[htpb!]
	\centering
	\includegraphics[width=0.9\textwidth]{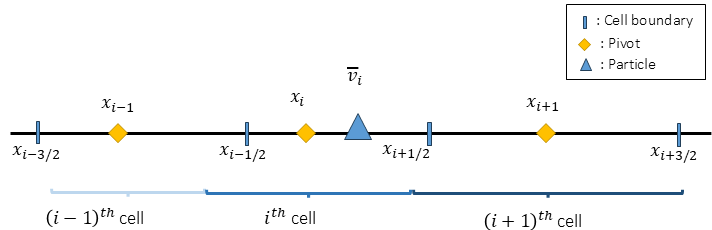}
	\caption{Particle allocation}\label{fig_3}
	\label{D55}
\end{figure}

Consider  $\bar{v}_i>x_i$ (see Figure \ref{fig_3})  and  $x_i$ and $x_{i+1}$ are the two neighboring nodes associated with $\bar{v}_i$. Consider  the terms $c_1(\bar{v}_i,x_i)$ and $c_2(\bar{v}_i,x_{i+1})$ are the fractions of the birth terms $\hat{B}_i(t)$ to be allocate at $x_i$ and $x_{i+1}$ respectively.
Then, to preserve number of particles and volume allocated in $i$ and $i+1$ cell, the  fractions should satisfy the following relations:
\begin{align}\label{2_12}
	c_1(\bar{v}_i,x_i)+c_2(\bar{v}_i,x_{i+1})=\hat{B}_i\quad\text{and}\quad
	x_ic_1(\bar{v}_i,x_i)+ x_{i+1}c_2(\bar{v}_i,x_{i+1})=\bar{v}_i\hat{B}_i.
\end{align}
Solving relations \eqref{2_12}, we get
\begin{align}\label{2_13}
	c_1(\bar{v}_i,x_i)=\hat{B}_i\lambda_i^{+}(\bar{v}_i)\quad\text{and}\quad c_2(\bar{v}_i,x_{i+1})=\hat{B}_i\lambda_{i+1}^{-}(\bar{v}_i),\quad \text{where}\quad\lambda_i^{\pm}(x)= \frac{\ds x-x_{i\pm 1}}{\ds x_i-x_{i \pm 1}}.
\end{align}

Now, there arise four possible birth fractions that can be  considered during a birth assignment at $x_i$. Among these,  two birth fractions arise from the $i^\text{th}$  cell  and other two arise from the neighboring cells (see Figure \ref{fig_4}).

\begin{figure}[htpb!]
	\centering
	\includegraphics[width=0.9\textwidth]{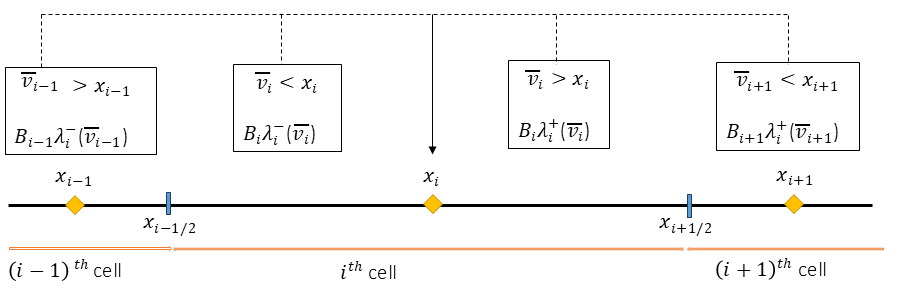}
	\caption{Particle contribution at $x_i$ from all possible cells}\label{fig_4}
	\label{D60}
\end{figure} 

Considering  all possible birth assignments, the following semi-discrete system  obtained due to cell volume average method is defined as
\begin{equation}\label{2_14}
	\frac{\dd \hat{N}_i(t)}{\dd t}=\hat{\mathcal{B}}_i(t)-\hat{\mathcal{D}}_i(t),\hspace{0.1cm}\text{with initial conditions,}\quad\hat{N}_i(0)=\hat{N}_i^0\geq0,
\end{equation}
for all $i=1,2,\dots,I$ and  $\hat{N}_i(t)$ be the solution of \eqref{2_14}. Here, the modified birth term is defined as
\begin{align}\label{2_15} 
	\mathcal{\hat{B}}_i(t)	 
	=&\left\{\begin{array}{ll} 
		\vspace{0.4cm}	b^{(2)}_i+b^{(3)}_{i+1}	, & \quad i=1,\\\vspace{0.4cm}
		b^{(1)}_{i-1} +b^{(2)}_i+b^{(3)}_{i+1}, & \quad i=2,3,\dots,I-1,\\
		b^{(1)}_{i-1} +b^{(2)}_i, & \quad i=I,
	\end{array}
	\right.
\end{align}
where
\begin{align*}
	b^{(1)}_{i-1}:=&\lambda_i^-(\bar{v}_{i-1})H(\bar{v}_{i-1}-x_{i-1}) \hat{B}_{i-1}(t),\quad
	b^{(2)}_{i}:=\lambda_i^-(\bar{v}_{i})H(x_i-\bar{v}_{i})	\hat{B}_{i}(t)+ \lambda_i^+(\bar{v}_{i})H(\bar{v}_{i}-x_{i})\hat{B}_{i}(t),\\
	b^{(3)}_{i+1}:=&\lambda_i^+(\bar{v}_{i+1})H(x_{i+1}-\bar{v}_{i+1})\hat{B}_{i+1}(t)\quad\text{and}\quad H(x)= \left\{\begin{array}{ll} 
		1	, & \quad x > 0,\\
		1/2, & \quad x=0,\\
		0, & \quad x<0,
	\end{array}\hspace{0.1cm} \text{is the Heaviside function}.
	\right.
\end{align*}
The death term  is defined by 
\begin{align}\label{2_16}
	\hat{\mathcal{D}}_i(t)=\sum_{j=1}^{I} \K(x_i,x_j)\hat{ N}_i(t) \hat{ N}_j(t).
\end{align}
Throughout the study we take  the following assumptions as:\\
$(i)$ The collisional kernel satisfies $\mathcal{K}\in\mathcal{C}(\mathbb{R}^2_+)$. Therefore,   for all $y,z\in [x_{\min},x_{\max}]$, there exists a constant $C$  depending on $x_{\max}$ only such that
\begin{align}\label{2_17}
	\sup_{(y,z)\in[x_{\min},x_{\max}]^2}|\mathcal{K}(y,z)|\le C(x_{\max}).
\end{align}
Note that using  the inequality \eqref{2_17}  together with equation \eqref{1_6} in  the discrete equation \eqref{2_14} the  time evolution of the discrete zeroth moment is defined  as
\begin{align}\label{2_18}
	\frac{\dd \mathcal{\hat{M}}_0(t)}{\dd t}\le [\bar{\zeta}-1]C(x_{\max})\mathcal{\hat{M}}^2_0(t).
\end{align}
Thus, the time evolution of the discrete zeroth moment  or the total number of particles  
is bounded on a finite time interval $[0,T]$. So, there exists a constant $\beta(T)$ such that
\begin{align}\label{2_19}
	\mathcal{\hat{M}}_0(t)\le\beta(T),\quad\text{for all}\quad t\in[0,T]. 
\end{align}
\begin{Proposition}\label{prop_2_1}
	The discrete scheme \eqref{2_14}-\eqref{2_16} satisfies the volume conservation law and consistent with the temporal evolution of zeroth moment.
\end{Proposition}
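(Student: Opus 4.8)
The plan is to verify both assertions by differentiating the discrete moments directly in the semi-discrete system \eqref{2_14} and then reorganising the modified birth term \eqref{2_15} according to the cell from which each fraction originates. Writing $\frac{\dd \hat{\mathcal{M}}_1}{\dd t}=\sum_{i=1}^I x_i(\hat{\mathcal{B}}_i-\hat{\mathcal{D}}_i)$ and $\frac{\dd \hat{\mathcal{M}}_0}{\dd t}=\sum_{i=1}^I (\hat{\mathcal{B}}_i-\hat{\mathcal{D}}_i)$, the death contributions are immediate: relabelling the indices in \eqref{2_16} gives $\sum_i x_i \hat{\mathcal{D}}_i=\sum_j\sum_k x_j\K(x_j,x_k)\hat N_j\hat N_k$ and $\sum_i \hat{\mathcal{D}}_i=\sum_j\sum_k \K(x_j,x_k)\hat N_j\hat N_k$, so everything reduces to understanding the two birth sums.

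The central step is a resummation of $\sum_i x_i\hat{\mathcal{B}}_i$ and $\sum_i\hat{\mathcal{B}}_i$ by source cell. Each unmodified birth $\hat B_m$ is split into exactly two fractions of $\hat B_m$ determined by \eqref{2_13}: when $\bar v_m>x_m$ it is shared between $x_m$ and $x_{m+1}$, and when $\bar v_m<x_m$ between $x_{m-1}$ and $x_m$, the Heaviside factors in \eqref{2_15} selecting the correct pair. In either case the two weights were constructed to solve the linear system \eqref{2_12}, so that their sum is $1$ and their $x$-weighted sum is $\bar v_m$; multiplying by $\hat B_m$, the pair contributes total number $\hat B_m$ and total volume $\bar v_m\hat B_m=\hat V_m$ by \eqref{2_11}. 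Summing over $m$ therefore yields $\sum_i\hat{\mathcal{B}}_i=\sum_m\hat B_m$ and $\sum_i x_i\hat{\mathcal{B}}_i=\sum_m\hat V_m$.

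It then remains to telescope $\sum_m\hat B_m$ and $\sum_m\hat V_m$. Substituting \eqref{2_5} and \eqref{2_10} and interchanging the order of summation so that the outer sums run over the mother indices $(j,k)$ and the inner sum over $m=1,\dots,j$, the $m$-sums collapse because the partition points $p_j^m$ from \eqref{2_6} are chosen so that the intervals $[x_{m-1/2},p_j^m]$ tile $[0,x_j]$ exactly, the diagonal term $m=j$ terminating at $p_j^j=x_j$. Hence $\sum_{m=1}^j\beta_{m,j}^k=\int_0^{x_j}\beta(x|x_j;x_k)\,\dd x=\zeta(x_j,x_k)$ by \eqref{1_5}, and $\sum_{m=1}^j\int_{x_{m-1/2}}^{p_j^m}x\beta\,\dd x=\int_0^{x_j}x\beta(x|x_j;x_k)\,\dd x=x_j$ by the volume-conservation property \eqref{1_4}. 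Combining with the death terms gives $\frac{\dd\hat{\mathcal{M}}_1}{\dd t}=\sum_j\sum_k\K(x_j,x_k)\hat N_j\hat N_k\,(x_j-x_j)=0$, which is the volume conservation law, while $\frac{\dd\hat{\mathcal{M}}_0}{\dd t}=\sum_j\sum_k\K(x_j,x_k)\hat N_j\hat N_k[\zeta(x_j,x_k)-1]$ coincides with \eqref{2_8}, establishing consistency with the zeroth moment.

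The step I expect to require the most care is the treatment of the boundary cells $i=1$ and $i=I$, where \eqref{2_15} omits one of the three contributions and the resummation by source cell could in principle leak mass past the ends of the grid. For the top cell this is ruled out because only the mother index $j=I$ feeds $\Lambda_I$, and since $p_I^I=x_I$ the newborns there lie in $[x_{I-1/2},x_I]$, forcing $\bar v_I\le x_I$ so that no fraction is ever assigned to the nonexistent pivot $x_{I+1}$; the analogous check at $i=1$ rules out downward leakage. One must also confirm that $\bar v_i\in[x_{i-1/2},x_{i+1/2}]$ always holds, so that the interpolation weights $\lambda_i^{\pm}$ in \eqref{2_13} stay in $[0,1]$ and the reorganisation above is legitimate; this is immediate from $x_{i-1/2}\hat B_i\le\hat V_i\le x_{i+1/2}\hat B_i$, which follows from the integration ranges in \eqref{2_10}. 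Once these edge cases are dispatched, the resummation and telescoping are routine.
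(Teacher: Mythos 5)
Your argument is essentially the paper's own proof: the paper performs the same resummation by source cell in Appendix \ref{appen_A} (showing the four birth contributions collapse to $\sum_i\bar v_i\hat B_i$ because the weights solve \eqref{2_12}), then interchanges the order of summation and invokes the tiling of $[0,x_j]$ by the intervals $[x_{i-1/2},p_i^j]$ together with \eqref{1_4} and \eqref{1_5}; your treatment of the death terms and of the zeroth moment is likewise identical in substance. The one place you go beyond the paper is the boundary discussion, and there your claim for $i=1$ does not follow from the stated reasoning: unlike the top cell, $\Lambda_1$ receives newborns from every mother index $j\ge 1$, so $\hat V_1$ integrates $x$ over $[0,x_{3/2}]$ and one only gets $\bar v_1\in[0,x_{3/2}]$, not $\bar v_1\ge x_1$. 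Hence $\bar v_1<x_1$ is entirely possible, in which case the complementary fraction $\lambda_0^{+}(\bar v_1)\hat B_1$ has no pivot to land on and the ``analogous check'' does not rule out downward leakage. This is a defect of the scheme's boundary treatment that the paper's proof silently ignores rather than resolves, so it does not invalidate your reproduction of the paper's argument, but you should either drop the claim that the $i=1$ case is dispatched analogously or state explicitly that exact conservation at the first cell requires an additional convention (e.g.\ assigning the whole of $\hat B_1$ to $x_1$ when $\bar v_1<x_1$, at the cost of an $\mathcal{O}(\Delta x_1)$ mass defect there).
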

\begin{proof}
	Multiplying $x_i$ to the equation \eqref{2_14} and summing over $i$, we obtain
	\allowdisplaybreaks
	\begin{align}
		\frac{\dd}{\dd t}\sum_{i=1}^{I}x_i\hat{ N}_i(t)=&\sum_{i=1}^{I}x_i\hat{B}_{i-1}(t)\lambda_i^-(\bar{v}_{i-1})H(\bar{v}_{i-1}-x_{i-1}) +\sum_{i=1}^{I}x_i\hat{B}_{i}(t)
		\lambda_i^-(\bar{v}_{i})H(x_i-\bar{v}_{i})\notag \\ 
		&+ \sum_{i=1}^{I}x_i\hat{B}_{i}(t)\lambda_i^+(\bar{v}_{i})H(\bar{v}_{i}-x_{i})+\sum_{i=1}^{I}x_i\hat{B}_{i+1}(t) \lambda_i^+(\bar{v}_{i+1})H(x_{i+1}-\bar{v}_{i+1})\notag\\&-\sum_{i=1}^{I}\sum_{j=1}^{I}x_i\K(x_i,x_j)\hat{ N}_i(t)\hat{ N}_j(t).
		%
		\label{2_20}
	\end{align}
	Using the definition of $\lambda^{\pm}_{i}$ \eqref{2_13} and $H(x)$  in equation \eqref{2_20}, we obtain 
	\begin{align}
		\frac{\dd}{\dd t}\sum_{i=1}^{I}x_i\hat{ N}_i(t)	=&\sum_{i=1}^{I}\bar{v}_i\hat{B}_{i}(t)-\sum_{i=1}^{I}\sum_{j=1}^{I}x_i\K(x_i,x_j)\hat{ N}_i(t)\hat{ N}_j(t)\label{2_21}\\
		=&\sum_{i=1}^{I}\sum_{k=1}^{I} \sum_{j=i}^{I}  \K(x_j,x_k) \hat{N}_j(t)\hat{N}_k(t) \int_{x_{i-1/2}}^{p_j^i}x \beta(x|x_j;x_k)\dd x\notag\\&-\sum_{i=1}^{I}\sum_{j=1}^{I}x_i\K(x_i,x_j)\hat{ N}_i(t)\hat{ N}_j(t).\label{2_22}
	\end{align}
	(see Appendix \ref{appen_A} to  derive equation \eqref{2_21} for detailed calculation).
	Changing order of summations of  the above equation \eqref{2_22}, we get
	\begin{align}
		\frac{\dd}{\dd t}\sum_{i=1}^{I}x_i\hat{ N}_i(t)	=&\sum_{k=1}^{I} \sum_{j=1}^{I}  \K(x_j,x_k)\hat{ N}_j(t)\hat {N}_k(t)\sum_{i=1}^{j} \int_{x_{i-1/2}}^{p_i^j}x \beta(x|x_j;x_k)\dd x\notag\\&-\sum_{i=1}^{I}\sum_{j=1}^{I}x_i\K(x_i,x_j)\hat{ N}_i(t)\hat{ N}_j(t).\label{2_23}
	\end{align}
	Using volume preserving property \eqref{1_4} in equation \eqref{2_23} and simplifying calculation, it yields
	\begin{align*}
		\frac{\dd}{\dd t}\sum_{i=1}^{I}x_i\hat{ N}_i(t)=\sum_{k=1}^{I}\sum_{j=1}^{I}x_j\K(x_j,x_k)\hat{ N}_j(t)\hat{ N}_k(t)-\sum_{i=1}^{I}\sum_{j=1}^{I}x_i\K(x_i,x_j)\hat{ N}_i(t)\hat{ N}_j(t)=0,
	\end{align*}
	that is $\mathcal{\hat{M}}_1(t)=\mathcal{\hat{M}}_1(0)$ for all $t\geq0$, which is the volume conservation law.\\
	The time evolution of the discrete zeroth moment is calculated by taking summation over $i$ on  discrete equation \eqref{2_14} and calculating in similar way as volume conservation and using property \eqref{1_5}
	\begin{align*}
		\frac{\dd}{\dd t}\sum_{i=1}^{I}\hat{N}_i(t)=&\sum_{k=1}^{I} \sum_{j=i}^{I}  \K(x_j,x_k)\hat{N}_j(t)\hat{ N}_k(t)\sum_{i=1}^{j} \int_{x_{i-1/2}}^{p_j^i}\beta(x|x_j;x_k)\dd x-\sum_{i=1}^{I}\sum_{j=1}^{I}\K(x_i,x_j)\hat{ N}_i(t)\hat{ N}_j(t)\notag\\
		=&\sum_{k=1}^{I} \sum_{j=1}^{I}  \K(x_j,x_k)\hat{ N}_j(t) \hat{N}_k(t)\zeta(x_j,x_k)-\sum_{i=1}^{I}\sum_{j=1}^{I}\K(x_i,x_j)\hat{ N}_i(t)\hat{ N}_j(t)\notag\\
		=&\sum_{k=1}^{I}\sum_{j=1}^{I}\K(x_j,x_k)\hat{ N}_j(t)\hat{ N}_k(t)\left[\zeta(x_j,x_k)-1\right].
	\end{align*}
	This is equivalent with the time evolution of discrete zeroth  moment \eqref{2_8}.
\end{proof}
Here, we are a position to introduce the semi-discrete system in a  vector form  described by reformulation\eqref{2_14} as follows: 
$\boldsymbol{\hat{N}}=\left\{\hat{N}_1,\hat{N}_2,\dots,\hat{N}_I\right\}\in\mathbb{R}^I$ is the numerical solution of the following semi-discrete system
\begin{align}\label{2_24}
	\frac{\dd \boldsymbol{\hat{N}}}{\dd t}=&\hat{\boldsymbol{\mathcal{B}}}(\boldsymbol{\hat{N}})-\hat{\boldsymbol{\mathcal{D}}}(\boldsymbol{\hat{N}})
	:=\boldsymbol{\hat{J}}(\boldsymbol{\hat{N}})\quad\text{with}\quad\boldsymbol{\hat{N}}(0)=\boldsymbol{\hat{N}}^0\geq0,
\end{align}
where $i^{th}$ components of $\boldsymbol{\mathcal{\hat{B}}}, \boldsymbol{\mathcal{{\hat{D}}}}\in\mathbb{R}^I$ are defined by $\mathcal{\hat{B}}_i$ \eqref{2_15} and $\mathcal{\hat{D}}_i$ \eqref{2_16} and also the numerical flux  $\boldsymbol{\hat{J}}=\left\{{\hat{J}}_1, {\hat{J}}_2,\dots,{\hat{J}}_I\right\}\in\mathbb{R}^I$ is a non negative vector whose $i^{\text{th}}$ component is given by
${\hat{J}}_i=\hat{\mathcal{B}}_i-\hat{\mathcal{D}}_i.$


\section{Stability and consistency}\label{sec_3}
\subsection{Preliminary definitions and theorems}

Here, we assume the space $X=\mathbb{R}^I$ equipped with 
the discrete $L^1$-norm defined by
\begin{align}\label{3_1}
	||\boldsymbol{\hat{N}}(t)||=\sum_{i=1}^{I}|\hat{N}_i(t)|,\hspace{0.2cm}\text{ for all}\hspace{0.2cm}0\le t\le T.
\end{align}
\begin{defn}
	(Spatial truncation error).  The discretization residue obtained due to the substitution  of the exact solution in the  discrete scheme \eqref{2_24} is called truncation error and is mathematically calculated as
	\begin{align}\label{3_2}
		\sigma(t)=\frac{\dd \boldsymbol{N}(t)}{\dd t} -\frac{\dd \boldsymbol{\hat{N}}(t)}{\dd t},\hspace{0.2cm}\text{ for all}\hspace{0.2cm}0\le t\le T
	\end{align}
	where $\sigma=\left\{\sigma_1,\sigma_2,\dots,\sigma_I\right\}$ is a vector whose $i^\text{th}$ component is given by $\ds \sigma_i(t)=\ds \frac{\dd {N}_i(t)}{\dd t} -\frac{\dd {\hat{N}}_i(t)}{\dd t}$.
\end{defn}
\begin{defn}
	(Global discretization error). A numerical method is called the $p$th order convergent if for $\Delta x\to 0$
	\begin{align}\label{3_3}
		||\boldsymbol{N}(t)-\boldsymbol{\hat{N}}(t)||=\mathcal{O}(\Delta x ^p)\hspace{0.2cm}\text{ for all}\hspace{0.2cm}0\le t\le T.
	\end{align}
\end{defn}
\begin{defn}
	(Lipschitz condition). A mapping $\hat{J}$ which follows
	\begin{align}
		||\boldsymbol{\hat{J}}(\boldsymbol{g})-\boldsymbol{\hat{J}}(\boldsymbol{h})||\le \gamma||\boldsymbol{g}-\boldsymbol{h}||\hspace{0.2cm}\text{ for all}\hspace{0.2cm}\boldsymbol{g},\boldsymbol{h}\in\mathbb{R}^I,
	\end{align}
	is said to be satisfy the Lipschitz condition with $\gamma<\infty$ as Lipschitz constant.
\end{defn}
\begin{defn}\label{def_3_4}
	(Nonnegativity). The system of ODEs in $\mathbb{R}^I$  defined by \eqref{2_24} is called nonnegative or nonnegativity preserving if 
	\begin{align}\label{3_5}
		\boldsymbol{\hat{N}}(0)\geq0 \hspace{0.2cm}\text{implies}\hspace{0.2cm} \boldsymbol{\hat{N}}(t)\geq0\quad\text{for all}\quad 0\leq t\le T.
	\end{align}
\end{defn}
\begin{defn}
	(Consistency). A numerical scheme is called $p^\text{th}$ order consistent if for $\Delta x\to 0$
	\begin{align}\label{3_4}
		||\sigma(t)||=\mathcal{O}(\Delta x ^p)\hspace{0.2cm}\text{ uniformly for all}\hspace{0.2cm}0\le t\le T.
	\end{align}
\end{defn}
\begin{Proposition}\label{pros_3_1}
	Let $\boldsymbol{\hat{J}}\in \mathbb{R}^I$ be the numerical flux defined by the equation \eqref{2_24} in the new scheme VAM. Then $\boldsymbol{\hat{J}}$ satisfies the Lipschitz condition.
\end{Proposition}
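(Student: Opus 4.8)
The plan is to establish the bound componentwise: for each index $i$ I would show that $\hat{J}_i=\hat{\mathcal{B}}_i-\hat{\mathcal{D}}_i$ is Lipschitz on the bounded region in which the discrete solution lives — the moment bound \eqref{2_19} guarantees $\|\boldsymbol{\hat{N}}\|\le\beta(T)$ — and then sum these estimates against the norm \eqref{3_1}. First I would dispose of the death term: since $\hat{\mathcal{D}}_i=\sum_{j}\K(x_i,x_j)\hat{N}_i\hat{N}_j$ is a quadratic form with coefficients bounded by $C(x_{\max})$ from \eqref{2_17}, the elementary identity $g_ig_j-h_ih_j=g_i(g_j-h_j)+h_j(g_i-h_i)$ yields $\|\hat{\boldsymbol{\mathcal{D}}}(\boldsymbol{g})-\hat{\boldsymbol{\mathcal{D}}}(\boldsymbol{h})\|\le C(x_{\max})(\|\boldsymbol{g}\|+\|\boldsymbol{h}\|)\|\boldsymbol{g}-\boldsymbol{h}\|$, which is Lipschitz on the bounded set with constant proportional to $\beta(T)$.

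The genuinely delicate object is the birth term, because of the ratio $\bar{v}_i=\hat{V}_i/\hat{B}_i$ appearing inside both the weights $\lambda_i^{\pm}$ and the Heaviside switches. The key device is that the division by $\hat{B}_i$ never actually survives: writing $\lambda_i^{-}(\bar{v}_i)\hat{B}_i=(\hat{V}_i-x_{i-1}\hat{B}_i)/(x_i-x_{i-1})$ and $\lambda_i^{+}(\bar{v}_i)\hat{B}_i=(\hat{V}_i-x_{i+1}\hat{B}_i)/(x_i-x_{i+1})$, each allocated fraction becomes \emph{affine} in the pair of bilinear quantities $(\hat{V}_i,\hat{B}_i)$ defined in \eqref{2_10} and \eqref{2_5}. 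Moreover, since $\hat{B}_i\ge0$, the sign of $\bar{v}_i-x_i$ equals the sign of the polynomial $\hat{V}_i-x_i\hat{B}_i$, so each Heaviside switch can be expressed without division. Combining the two branches of the self-contribution gives
\begin{align*}
	b^{(2)}_i=\begin{cases}\dfrac{\hat{V}_i-x_{i-1}\hat{B}_i}{x_i-x_{i-1}}, & \hat{V}_i\le x_i\hat{B}_i,\\ \dfrac{\hat{V}_i-x_{i+1}\hat{B}_i}{x_i-x_{i+1}}, & \hat{V}_i\ge x_i\hat{B}_i,\end{cases}
\end{align*}
and a direct check shows both branches equal $\hat{B}_i$ on the switching surface $\hat{V}_i=x_i\hat{B}_i$; hence $b^{(2)}_i$ is a \emph{continuous} piecewise-affine, and therefore globally Lipschitz, function of $(\hat{V}_i,\hat{B}_i)$. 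The same reasoning applies to $b^{(1)}_{i-1}$ and $b^{(3)}_{i+1}$.

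It then remains to control $(\hat{V}_i,\hat{B}_i)$ as functions of $\boldsymbol{\hat{N}}$. Both are quadratic forms $\sum_{k}\sum_{j\ge i}a_{i,j}^{k}\K(x_j,x_k)N_jN_k$ whose coefficients $a_{i,j}^{k}$ — namely $\beta_{i,j}^{k}$ and $\int_{x_{i-1/2}}^{p_j^i}x\,\beta(x|x_j;x_k)\,\dd x$ — are bounded via \eqref{1_4} and \eqref{1_5} by $\bar{\zeta}$ and $x_{\max}\bar{\zeta}$ respectively, while $\K$ is bounded by \eqref{2_17}. The same quadratic estimate used for the death term then makes $\boldsymbol{g}\mapsto(\hat{V}_i,\hat{B}_i)$ Lipschitz on the bounded region. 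Composing the globally Lipschitz piecewise-affine selection maps with these locally Lipschitz quadratic maps, and summing over $i$ using the structure of \eqref{2_15}, yields $\|\hat{\boldsymbol{\mathcal{B}}}(\boldsymbol{g})-\hat{\boldsymbol{\mathcal{B}}}(\boldsymbol{h})\|\le\gamma_1\|\boldsymbol{g}-\boldsymbol{h}\|$; adding the death estimate gives $\|\boldsymbol{\hat{J}}(\boldsymbol{g})-\boldsymbol{\hat{J}}(\boldsymbol{h})\|\le\gamma\|\boldsymbol{g}-\boldsymbol{h}\|$.

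The hard part, and the step I would write most carefully, is the continuity/Lipschitz property of the birth term across the Heaviside switches: one must verify that the two affine branches meet on $\hat{V}_i=x_i\hat{B}_i$ so that no jump is introduced, and that the composition stays Lipschitz even though $\bar{v}_i$ is individually unbounded as $\hat{B}_i\to0$ — which is exactly what the $\hat{B}_i$-cancellation above is designed to circumvent. A secondary point worth stating explicitly is that, since the maps are only \emph{quadratic}, the global estimate on all of $\mathbb{R}^I$ should be read on the bounded invariant set supplied by \eqref{2_19}, with $\gamma=\gamma(T)$.
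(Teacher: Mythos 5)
Your proposal is correct, and for the birth term it takes a genuinely different --- and in one respect more careful --- route than the paper. The paper's proof factors the weights $\lambda_i^{\pm}(\bar v)H(\cdot)$ out of the difference $\hat{\mathcal{B}}_i(\boldsymbol{\hat M})-\hat{\mathcal{B}}_i(\boldsymbol{\hat N})$ and invokes the uniform bound $0\le\lambda_i^{\pm}(x)H(x)\le1$, reducing everything to $|\hat B_i(\boldsymbol{\hat M})-\hat B_i(\boldsymbol{\hat N})|\le 2\bar\zeta C(x_{\max})\beta(T)\,\|\boldsymbol{\hat M}-\boldsymbol{\hat N}\|$; this tacitly treats $\bar v_i$ and the Heaviside switches as identical for both arguments, which they are not, since $\bar v_i=\hat V_i/\hat B_i$ depends on the solution. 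Your reformulation of each allocated fraction as a continuous piecewise-affine function of the bilinear pair $(\hat V_i,\hat B_i)$, with the two branches checked to agree on the switching surface $\hat V_i=x_i\hat B_i$, confronts exactly the point the paper glosses over, and it is the honest way to obtain a Lipschitz bound for a solution-dependent allocation rule. One step you must still supply to match the paper's conclusion that the constant is independent of the grid: the affine selection maps $(\hat V_i,\hat B_i)\mapsto(\hat V_i-x_{i\pm1}\hat B_i)/(x_i-x_{i\pm1})$ have naive Lipschitz constants of order $|x_i-x_{i\pm1}|^{-1}$, so ``compose Lipschitz with Lipschitz'' alone gives a constant that blows up as $\Delta x\to0$. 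You need the cancellation that the numerator is itself a quadratic form in $\boldsymbol{\hat N}$ with coefficients $\int(x-x_{i\pm1})\beta(x|x_j;x_k)\,\dd x$ over a range on which $|x-x_{i\pm1}|\le 2|x_i-x_{i\pm1}|$, so the $|x_i-x_{i\pm1}|^{-1}$ is absorbed and the bound reduces to $2\bar\zeta C(x_{\max})\beta(T)\,\|\boldsymbol{g}-\boldsymbol{h}\|$ per branch --- which is precisely what the paper's shortcut $0\le\lambda H\le1$ delivers in one line. Your death-term estimate coincides with the paper's, and your closing caveat that the Lipschitz property is to be read on the bounded set supplied by \eqref{2_19} is the same restriction the paper uses implicitly through $\beta(T)$.
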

\allowdisplaybreaks
\begin{proof}
	Let $\boldsymbol{\hat{M}}, \boldsymbol{\hat{N}}\in \mathbb{R}^I$ be two solutions  satisfying the discrete system \eqref{2_24}. 
	\begin{align}
		||\hat{\boldsymbol{\mathcal{B}}}(\boldsymbol{\hat{M}})-\hat{\boldsymbol{\mathcal{B}}}(\boldsymbol{\hat{N}})||=&\sum_{i=1}^{I}\bigg|\lambda_i^-(\bar{v}_{i-1})H(\bar{v}_{i-1}-x_{i-1}) \left(\hat{B}_{i-1}(\boldsymbol{\hat{M}})-\hat{B}
		_{i-1}(\boldsymbol{\hat{N}})\right)\notag\\&\hspace{0.6cm}+
		\lambda_i^-(\bar{v}_{i})H(x_i-\bar{v}_{i})	\left(\hat{B}_{i}(\boldsymbol{\hat{M}})-\hat{B}_{i}(\boldsymbol{\hat{N}})\right)  
		+\lambda_i^+(\bar{v}_{i})H(\bar{v}_{i}-x_{i})\left(\hat{B}_{i}(\boldsymbol{\hat{M}})-\hat{B}_{i}(\boldsymbol{\hat{N}})\right)\notag\\&\hspace{0.6cm}+ \lambda_i^+(\bar{v}_{i+1})H(x_{i+1}-\bar{v}_{i+1})\left(\hat{B}_{i+1}(\boldsymbol{\hat{M}})-\hat{B}_{i+1}(\boldsymbol{\hat{N}})\right) \bigg|\notag\\
		\le&\sum_{i=1}^{I}\lambda_i^-(\bar{v}_{i-1})H(\bar{v}_{i-1}-x_{i-1}) \left|\hat{B}_{i-1}(\boldsymbol{\hat{M}})-\hat{B}_{i-1}(\boldsymbol{\hat{N}})\right|\notag\\&+
		\sum_{i=1}^{I}\lambda_i^-(\bar{v}_{i})H(x_i-\bar{v}_{i})	\left|\hat{B}_{i}(\boldsymbol{\hat{M}})-\hat{B}_{i}(\boldsymbol{\hat{N}})\right|  
		\notag	\\&	+\sum_{i=1}^{I} \lambda_i^+(\bar{v}_{i})H(\bar{v}_{i}-x_{i})\left|\hat{B}_{i}(\boldsymbol{\hat{M}})-\hat{B}_{i}(\boldsymbol{\hat{N}})\right|\notag\\&+\sum_{i=1}^{I} \lambda_i^+(\bar{v}_{i+1})H(x_{i+1}-\bar{v}_{i+1})\left|\hat{B}_{i+1}(\boldsymbol{\hat{M}})-\hat{B}_{i+1}(\boldsymbol{\hat{N}})\right|.\label{3_7}
	\end{align}
	By the definitions of $\lambda_i^\pm(x)$  and $H(x)$, we have
	\begin{align*}
		0\le \lambda_i^\pm(x)H(x)\le 1. 
	\end{align*}
	Thereby,  estimation \eqref{3_7} can be written as		\begin{align}\label{3_8}
		||\hat{\boldsymbol{\mathcal{B}}}(\boldsymbol{\hat{M}})-\hat{\boldsymbol{\mathcal{B}}}(\boldsymbol{\hat{N}})||\le&\sum_{i=1}^{I}\left|\hat{B}_{i-1}(\boldsymbol{\hat{M}})-\hat{B}_{i-1}(\boldsymbol{\hat{N}})\right|+\sum_{i=1}^{I}\left|\hat{B}_{i}(\boldsymbol{\hat{M}})-\hat{B}_{i}(\boldsymbol{\hat{N}})\right|\notag\\&+\sum_{i=1}^{I}\left|\hat{B}_{i+1}(\boldsymbol{\hat{M}})-\hat{B}_{i+1}(\boldsymbol{\hat{N}})\right|.
	\end{align} 
	Using definition of $\hat{\beta_i}$ and property \eqref{1_5}, bounds of $\K$  \eqref{2_17} and boundedness of the discrete zeroth moment $\mathcal{\hat{M}}_0$ \eqref{2_19}, we deduce that
	\begin{align}
		\left|\hat{\mathcal{B}}_{i}(\boldsymbol{\hat{M}})-\hat{\mathcal{B}}_{i}(\boldsymbol{\hat{N}})\right| =&\sum_{i=1}^{I}\sum_{k=1}^{I}\sum_{j=i}^{I}\K(x_j,x_k)\left| \hat{M}_j \hat{M}_k-\hat{N}_j\hat{N}_k\right|\int_{x_{i-1/2}}^{p_j^i} \beta(x|x_j,x_k)\dd x\notag\\
		\le &\bar{\zeta}C(x_{\max})\sum_{k=1}^{I}\sum_{j=1}^{I}\frac{1}{2}\left|\left(\hat{M}_j+\hat{N}_j\right)\left(\hat{M}_k-\hat{N}_k\right)+\left(\hat{M}_j-\hat{N}_j\right)\left(\hat{M}_K+\hat{N}_k\right)\right|\notag\\
		\le &\frac{\bar{\zeta}C(x_{\max})}{2}\times2\bigg(\sum_{j=1}^{I}\left|\hat{M}_j+\hat{N}_j\right|\bigg)\bigg(\sum_{k=1}^{I}\left|\hat{M}_k-\hat{N}_k\right|\bigg)\notag\\
		\le&2\bar{\zeta}C(x_{\max})\beta(T)||\boldsymbol{\hat{M}}-\boldsymbol{\hat{N}}||.\label{3_9}
	\end{align}
	In similar way, for death term we obtain
	\begin{align}
		||\hat{\boldsymbol{\mathcal{D}}}(\boldsymbol{\hat{M}})-\hat{\boldsymbol{\mathcal{D}}}(\boldsymbol{\hat{N}})||\le 2C(x_{\max})\beta(T)||\boldsymbol{\hat{M}}-\boldsymbol{\hat{N}}||.\label{3_10}
	\end{align}
	Thereby birth term  from\eqref{3_9} and death term from \eqref{3_10}, combinedly we may write
	\begin{align*}
		\left|\left|\left(\hat{\boldsymbol{\mathcal{B}}}(\boldsymbol{\hat{M}})-\hat{\boldsymbol{\mathcal{D}}}(\boldsymbol{\hat{M}})\right)-\left(\hat{\boldsymbol{\mathcal{B}}}(\boldsymbol{\hat{N}})-\hat{\boldsymbol{\mathcal{D}}}(\boldsymbol{\hat{N}})\right)\right|\right|\le& ||\hat{\boldsymbol{\mathcal{B}}}(\boldsymbol{\hat{M}})-\hat{\boldsymbol{\mathcal{B}}}(\boldsymbol{\hat{N}})||+||\hat{\boldsymbol{\mathcal{D}}}(\boldsymbol{\hat{M}})-\hat{\boldsymbol{\mathcal{D}}}(\boldsymbol{\hat{N}})||\notag\\
		\le &2(\bar{\zeta}+1)C(x_{\max})\beta(T)\;||\boldsymbol{\hat{M}}-\boldsymbol{\hat{N}}||\notag\\
		\le &\eta||\boldsymbol{\hat{M}}-\boldsymbol{\hat{N}}||,
	\end{align*}
	where $\eta:=2(\bar{\zeta}+1)C(x_{\max})\beta(T)<\infty$ is a constant, independent of the grids.\\
	Finally, we can write
	\begin{align*}
		||\boldsymbol{\hat{J}}( \boldsymbol{\hat{M}})-\boldsymbol{\hat{J}}( \boldsymbol{\hat{N}})||\le \eta||\boldsymbol{\hat{M}}-\boldsymbol{\hat{N}}||,
	\end{align*}
	which implies that the $\boldsymbol{\hat{J}}$ satisfies the Lipschitz condition irrespective of the grids.
\end{proof}

\subsection{Nonnegativity of the solution}
\begin{Theorem}\label{theo_3_1}
	Let $\boldsymbol{\hat{J}}( \boldsymbol{\hat{N}})$
	be a continuous mapping satisfying Lipschitz condition with respect to $\boldsymbol{\hat{N}}$. Then the semi-discrete system \eqref{2_24} is nonnegative, if and only if for any vector $\boldsymbol{\hat{N}}\in\mathbb{R}^I$ satisfying
	\begin{align}\label{3_11}
		\boldsymbol{\hat{N}}\geq0,\quad\text{with}\quad \hat{N}_i=0\hspace{0.2cm}\text{implies}\hspace{0.2cm}\boldsymbol{\hat{J}(\boldsymbol{\hat{N}})}\geq0,
	\end{align}
	which proves that the solution by the new scheme VAM \eqref{2_14} is nonnegative.
\end{Theorem}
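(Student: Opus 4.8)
The statement is an equivalence, so the plan is to prove the two implications separately; I expect the reverse direction---that the subtangent condition \eqref{3_11} forces invariance of the nonnegative orthant---to carry the real content, while the forward direction is a short differentiation argument.

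For the \emph{necessity} direction I would assume that \eqref{2_24} is nonnegativity preserving and fix any $\boldsymbol{\hat{N}}\geq 0$ with $\hat{N}_i=0$ for some index $i$. Let $\boldsymbol{\hat{N}}(t)$ be the solution of \eqref{2_24} issued from this datum; it exists and is unique on $[0,T]$ by the Lipschitz property established in Proposition \ref{pros_3_1}, and is $C^1$ since $\boldsymbol{\hat{J}}$ is continuous. By hypothesis $\hat{N}_i(t)\geq 0=\hat{N}_i(0)$ for all small $t>0$, so the forward difference quotient $[\hat{N}_i(t)-\hat{N}_i(0)]/t$ is nonnegative; letting $t\to 0^{+}$ gives $\hat{J}_i(\boldsymbol{\hat{N}})=\dot{\hat{N}}_i(0)\geq 0$, which is precisely \eqref{3_11}.

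For the \emph{sufficiency} direction I would run a perturbation argument. Assuming \eqref{3_11}, take any $\boldsymbol{\hat{N}}(0)\geq 0$ and, for $\epsilon>0$, let $\boldsymbol{\hat{N}}^{\epsilon}$ solve the perturbed system $\dot{\boldsymbol{\hat{N}}}^{\epsilon}=\boldsymbol{\hat{J}}(\boldsymbol{\hat{N}}^{\epsilon})+\epsilon\boldsymbol{e}$ with $\boldsymbol{\hat{N}}^{\epsilon}(0)=\boldsymbol{\hat{N}}(0)+\epsilon\boldsymbol{e}$, where $\boldsymbol{e}=(1,\dots,1)^{\top}$; this solution again lives on $[0,T]$ by the Lipschitz bound. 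First I would show $\boldsymbol{\hat{N}}^{\epsilon}(t)>0$ componentwise on $[0,T]$: if not, set $t^{*}=\inf\{t\in[0,T]:\hat{N}^{\epsilon}_k(t)=0\text{ for some }k\}$, so that $\boldsymbol{\hat{N}}^{\epsilon}(t^{*})\geq 0$ with $\hat{N}^{\epsilon}_i(t^{*})=0$ for some $i$. Then \eqref{3_11} applies at $\boldsymbol{\hat{N}}^{\epsilon}(t^{*})$ to give $\hat{J}_i(\boldsymbol{\hat{N}}^{\epsilon}(t^{*}))\geq 0$, whence $\dot{\hat{N}}^{\epsilon}_i(t^{*})=\hat{J}_i(\boldsymbol{\hat{N}}^{\epsilon}(t^{*}))+\epsilon\geq\epsilon>0$. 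But $\hat{N}^{\epsilon}_i$ is strictly positive for $t<t^{*}$ and vanishes at $t^{*}$, forcing its left derivative there to be $\leq 0$, a contradiction. Hence $\boldsymbol{\hat{N}}^{\epsilon}>0$ on $[0,T]$.

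The final and, to my mind, most delicate step is the limit $\epsilon\to 0$, which is exactly where the Lipschitz hypothesis is used quantitatively. Writing both systems in integral form, subtracting, and using $\|\boldsymbol{\hat{N}}^{\epsilon}(0)-\boldsymbol{\hat{N}}(0)\|=\epsilon\|\boldsymbol{e}\|$ together with the Lipschitz constant $\gamma$ from Proposition \ref{pros_3_1}, Gronwall's inequality yields $\|\boldsymbol{\hat{N}}^{\epsilon}(t)-\boldsymbol{\hat{N}}(t)\|\le\epsilon\|\boldsymbol{e}\|(1+T)e^{\gamma T}$ uniformly on $[0,T]$. Thus $\boldsymbol{\hat{N}}^{\epsilon}(t)\to\boldsymbol{\hat{N}}(t)$, and since each $\boldsymbol{\hat{N}}^{\epsilon}(t)>0$, the limit satisfies $\boldsymbol{\hat{N}}(t)\geq 0$ on $[0,T]$. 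This establishes invariance of the nonnegative orthant and hence nonnegativity of the VAM solution, closing the equivalence. The only points needing care are the global existence of $\boldsymbol{\hat{N}}^{\epsilon}$ on $[0,T]$ (guaranteed by the Lipschitz bound and the a priori moment bound \eqref{2_19}) and the justification of the left-derivative sign at $t^{*}$.
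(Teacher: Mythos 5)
Your proof of the equivalence is correct, and your sufficiency argument (perturb the flux by $\epsilon\boldsymbol{e}$, rule out a first crossing time $t^{*}$ via the sign of the left derivative, then pass to the limit with Gronwall) is the same perturbation-plus-Lipschitz-stability route the paper takes, only executed more carefully --- the paper merely asserts that the solution ``cannot cross the hyperplane'' and appeals to a standard stability argument, whereas you actually supply the contradiction at $t^{*}$ and the quantitative $\epsilon\to 0$ limit. Where you genuinely diverge is the necessity direction: you use the general subtangent argument (the forward difference quotient of $\hat{N}_i$ at $t=0$ is nonnegative because the orbit stays in the orthant), which works for any Lipschitz flux, while the paper instead verifies condition \eqref{3_11} directly from the structure of the VAM terms: $\hat{\mathcal{B}}_i$ is a sum of nonnegative contributions (since $0\le\lambda_i^{\pm}(\cdot)H(\cdot)\le 1$ and each $\hat{B}_j\ge 0$) and $\hat{\mathcal{D}}_i=\hat{N}_i\sum_{j}\K(x_i,x_j)\hat{N}_j$ vanishes when $\hat{N}_i=0$, so $\hat{J}_i=\hat{\mathcal{B}}_i\ge 0$. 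Your version is the cleaner proof of the abstract ``only if,'' but the paper's structural computation is what actually delivers the theorem's final clause --- that the solution of the VAM scheme \eqref{2_14} is nonnegative --- because to invoke your sufficiency direction for the concrete scheme one must first check that the VAM flux satisfies \eqref{3_11}. That verification is absent from your write-up; it is a one-line check (precisely the computation just described), but without it your closing sentence ``hence nonnegativity of the VAM solution'' does not follow from the equivalence alone.
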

\begin{proof}
	Assume that the  semi-discrete system  \eqref{2_24} is nonnegative. Let $\boldsymbol{\hat{N}}$ be the solution of the system \eqref{2_24}  with initial conditions $\boldsymbol{\hat N(0)}\geq0$. By  definition \ref{def_3_4} of the ODE system, we have $\boldsymbol{\hat N}\geq0$. Now, $\boldsymbol{\hat{N}}\geq0$ with $\hat{N}_i=0$ implies
	$\hat{\mathcal{B}}_i(\boldsymbol{\hat{N}})\geq0\hspace{0.1cm}\text{and}\hspace{0.1cm} \hat{\mathcal{D}}_i(\boldsymbol{\hat{N}})=0 \hspace{0.1cm}\text{for all}\quad i\in\left\{1,2,\dots,I\right\}$.
	So, $\boldsymbol{\hat{N}}\geq0$ with $\hat{N}_i=0$ deduces that ${\hat{J}_i}(\boldsymbol{\hat{N}})\geq0$ for all $i=1,2,\dots,I\hspace{0.1cm} \text{ which implies}\hspace{0.1cm}   \boldsymbol{\hat{J}}(\boldsymbol{\hat{N}})\geq0$.\\
	Conversely, we have  the criteria
	\begin{align*}
		\boldsymbol{\hat{N}}\geq0,\quad\text{with}\quad \hat{N}_i=0\hspace{0.2cm}\text{implies}\hspace{0.2cm}\boldsymbol{\hat{J}(\boldsymbol{\hat{N}})}\geq0.
	\end{align*}
	The above expression implies $\ds  \frac{ \dd {\hat{N}_i}}{\dd t}\ds \geq0.$
	To prove the nonnegativity, we will use the Lipschitz condition on 
	$	\boldsymbol{\hat{J}}$. Since the solution $\boldsymbol{\hat{N}}$ can not cross  the hyperplane $\mathcal{H}_i=\left\{\boldsymbol{\hat{N}}\in\mathbb{R}^I: \hat{N}_i=0\right\}$ so there exists sufficiently small  $\varepsilon>0$ such that
	\begin{align*}
		\boldsymbol{\hat{N}}\geq0,\quad\text{with}\quad \hat{N}_i=0\hspace{0.2cm}\text{implies}\hspace{0.2cm}\frac{\ds \dd{\hat{N}}_i}{\ds \dd t}\geq\varepsilon>0.
	\end{align*}
	This will satisfy the the perturbed ODE system with
	\begin{align*}
		{\tilde{J}}_i={\hat{J}}_i+\varepsilon, \quad\text{for all}\quad i=1,2,\dots,I.
	\end{align*}
	By the Lipschitz condition of $\boldsymbol{\hat{J}}$ and using the standard stability argument for ODEs, if $\varepsilon\to0$, the solution of the unperturbed system with given initial condition will be approximated with exact solution of the perturbed system. This above argument and the criteria \eqref{3_11}, we can write that
	the system \eqref{2_24} is nonnegative. \\
	For the second part, we  have already proved that the system of ODEs \eqref{2_24} is nonnegative in the first part. 
	This directly implies that the solution by the new scheme VAM is nonnegative.
\end{proof}

\subsection{Consistency}
To find the error estimations of  approximated birth term, death term  and total volume flux, we need to integrate the birth term and death term of equation \eqref{2_1}  with  total volume  flux over $\Lambda_i$ and hence define  
\begin{subequations}\label{3_12}
	
	\allowdisplaybreaks
	\begin{align}
		B_i(t):=&\int_{x_{i-1/2}}^{x_{i+1/2}}\int_{0}^{x_{\max}} \int_{x}^{x_{\max}} \beta(x|y;z) \mathcal{K}(y,z) n(y,t)n(z,t) \dd y \dd z \dd x,\\
		D_i(t):=&\int_{x_{i-1/2}}^{x_{i+1/2}}\int_{0}^{x_{\max}} \mathcal{K}(x,y)n(x,t)n(y,t) \dd y \dd x,\\
		V_i(t):=&\int_{x_{i-1/2}}^{x_{i+1/2}}\int_{0}^{x_{\max}} \int_{x}^{x_{\max}} x \beta(x|y;z) \K(y,z) n(y,t)n(z,t) \dd y \dd z \dd x.
	\end{align}
\end{subequations}
\begin{Lemma}\label{lem_3_1}
	Let $B_i$, $D_i$, $V_i$, $\hat{B}_{i}$, $\hat{D}_{i}$, $\hat{V}_{i}$ are defined by the equations \eqref{3_12}, \eqref{2_5} and \eqref{2_10} respectively. Then we have the following error estimates:\\
	\begin{subequations}\label{3_13}
		\begin{flalign}
			&(i) \quad B_i=\hat{B}_{i} + \mathcal{O}(\Delta x_i^3),&&\\
			&(ii)\quad D_i=\hat{D}_{i} + \mathcal{O}(\Delta x_i^3),&&\\
			&(iii)\quad V_i=\hat{V}_{i} + \mathcal{O}(\Delta x_i^3).&&
		\end{flalign}
	\end{subequations}
\end{Lemma}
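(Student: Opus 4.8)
The plan is to read all three identities as weighted midpoint-quadrature estimates, using throughout that each pivot $x_j$ is the exact centre of $\Lambda_j$. The single building block I would isolate first is the one-cell estimate: for a weight $f$ that is $C^2$ on $\Lambda_j$,
\[\int_{\Lambda_j} f(y)\,n(y,t)\,\dd y = f(x_j)\,N_j(t) + \mathcal{O}(\Delta x_j^3).\]
This follows by writing the error as $\int_{\Lambda_j}[f(y)-f(x_j)]\,n(y,t)\,\dd y$, Taylor expanding $f$ about $x_j$, and observing that the centred first moment $\int_{\Lambda_j}(y-x_j)\,\dd y$ vanishes, which promotes the naive $\mathcal{O}(\Delta x_j)$ remainder to $\mathcal{O}(\Delta x_j^3)$; the constants are controlled by the continuity and boundedness already recorded in \eqref{2_17} and \eqref{2_19}, together with the assumed $C^2$ regularity of $\K$, $\beta$ and $n$.

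For $(ii)$ I would set $g(x,t):=\int_0^{x_{\max}}\K(x,y)\,n(y,t)\,\dd y$ so that $D_i=\int_{\Lambda_i} n(x,t)\,g(x,t)\,\dd x$, and split the error into an inner and an outer piece. Applying the one-cell estimate in $y$ to each subinterval gives $\sum_{j}\K(x_i,x_j)N_j = g(x_i,t)+\mathcal{O}(\Delta x^2)$, since $\sum_j\Delta x_j^3\le \Delta x^2\sum_j\Delta x_j=\mathcal{O}(\Delta x^2)$; applying it in $x$ gives $D_i=g(x_i,t)N_i+\mathcal{O}(\Delta x_i^3)$. Recalling $\hat D_i=N_i\sum_j\K(x_i,x_j)N_j$ from \eqref{2_5} and subtracting, $D_i-\hat D_i=\mathcal{O}(\Delta x_i^3)-N_i\,\mathcal{O}(\Delta x^2)$. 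Because $N_i=\mathcal{O}(\Delta x_i)$ while the grid regularity $\Delta x\le\alpha\,\Delta x_{\min}\le\alpha\,\Delta x_i$ gives $\Delta x^2=\mathcal{O}(\Delta x_i^2)$, the second term is again $\mathcal{O}(\Delta x_i^3)$, which closes $(ii)$.

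For $(i)$ and $(iii)$ I would first interchange the $x$- and $y$-integrations in \eqref{3_12} so that the inner integral $w(y,z):=\int_{x_{i-1/2}}^{\,\min(x_{i+1/2},\,y)}\beta(x|y;z)\,\dd x$ (with an extra factor $x$ inside for $V_i$) becomes part of a smooth integrand of $(y,z)$, the integral being understood as zero for $y<x_{i-1/2}$. The crucial observation is that its range lies inside $\Lambda_i$, so $w=\mathcal{O}(\Delta x_i)$. I would then apply a two-dimensional version of the one-cell estimate on each rectangle $\Lambda_j\times\Lambda_k$ with $j\ge i$: replacing $\K(y,z)\,w(y,z)$ by its pivot value and $n(y)n(z)\,\dd y\,\dd z$ by $N_jN_k$ (the latter exact by \eqref{2_3}), the centred-moment cancellations give a per-rectangle error $\mathcal{O}\big(\Delta x_i(\Delta x_j^3\Delta x_k+\Delta x_j\Delta x_k^3)\big)$ for $j>i$. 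Evaluating $w$ at $y=x_j$ automatically reproduces the discrete upper limit $p_j^i$ of \eqref{2_6} --- in particular $\min(x_{i+1/2},y)$ collapses to $x_i$ when $j=i$ --- so the variable upper limit needs no separate treatment; the single diagonal block $j=i$ is controlled directly by its $\Delta x_j^3=\Delta x_i^3$ factor even though $\partial_y w$ is not small there. Summing over $j\ge i$ and all $k$ and using $\sum_j\Delta x_j^3=\mathcal{O}(\Delta x^2)$, $\sum_k\Delta x_k=\mathcal{O}(1)$, the total error is $\mathcal{O}(\Delta x_i\,\Delta x^2)=\mathcal{O}(\Delta x_i^3)$, again by grid regularity.

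The main obstacle is the order bookkeeping rather than any single hard estimate: a naive count of the two-dimensional quadrature summed over all cell pairs only yields $\mathcal{O}(\Delta x^2)$, so the sharp third order rests on two separate gains --- the extra factor $\Delta x_i$ coming from integrating over a subinterval of $\Lambda_i$ in $(i)$ and $(iii)$ (and the factor $N_i$ in $(ii)$), and the grid-regularity bound $\Delta x/\Delta x_{\min}\le\alpha$ that turns a global $\mathcal{O}(\Delta x^2)$ into the local $\mathcal{O}(\Delta x_i^3)$. One should also verify that $w$ is genuinely $C^2$ in $(y,z)$ despite its variable upper limit, which follows from differentiating under the integral sign once $\beta(y|y;z)$ and the first derivatives of $\beta$ are bounded.
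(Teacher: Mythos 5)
Your proposal is correct and follows essentially the same route as the paper: interchange the order of integration so the cell $\Lambda_i$ appears as the inner range, split off the diagonal block $j=i$ (whose variable upper limit collapses to $p_j^i$ at the pivot) from the blocks $j>i$, and apply midpoint quadrature at the cell representatives. The only difference is one of detail: the paper simply invokes ``the midpoint quadrature rule'' and writes $+\,\mathcal{O}(\Delta x_i^3)$, whereas you make explicit the two ingredients that actually deliver the local third order --- the $\mathcal{O}(\Delta x_i)$ (resp.\ $N_i$) factor from the inner integral and the quasi-uniformity bound $\Delta x/\Delta x_{\min}\le\alpha$ --- which the paper leaves implicit.
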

\begin{proof}
	Considering $B_i(t)$ and the fact that $z$ is independent of $x$ and $y$, so we change the order of integration
	and rearrange the integrals in simplified  discretized form to get
	\begin{align*}
		B_i(t)=&\int_{0}^{x_{I+1/2}}\int_{x_{i-1/2}}^{x_{i+1/2}} \int_{x_{i-1/2}}^{y} \beta(x|y;z) \mathcal{K}(y,z) n(y,t)n(z,t) \dd x \dd y\dd z\\&+\int_{0}^{x_{I+1/2}} \int_{x_{i+1/2}}^{x_{I+1/2}} \int_{x_{i-1/2}}^{x_{i+1/2}}\beta(x|y;z) \K(y,z) n(y,t)n(z,t) \dd x \dd y\dd z.
	\end{align*}
	Applying the midpoint quadrature rule  for first two integrals of above equation, we get
	\allowdisplaybreaks
	\begin{align*}
		B_i(t)=&\sum_{k=1}^{I}\K(x_i,x_k) N_i(t)N_k(t)\int_{x_{i-1/2}}^{x_i} \beta(x|x_i;x_k)  \dd x+\mathcal{O}(\Delta x_i^3)\\&+\sum_{k=1}^{I}\sum_{j=i+1}^{I} \mathcal{K}(x_j,x_k) N_j(t)N_k(t) \int_{x_{i-1/2}}^{x_{i+1/2}} \beta(x|x_j;x_k)\dd x+\mathcal{O}(\Delta x_i^3)\\
		=&\sum_{k=1}^{I}\sum_{j=i}^{I} \K(x_j,x_k) N_j(t)N_k(t) \int_{x_{i-1/2}}^{p^i_{j}} \beta(x|x_j;x_k)\dd x+\mathcal{O}(\Delta x_i^3)\\
		=&\sum_{k=1}^{I}\sum_{j=i}^{I}\beta_{i,j}^k \mathcal{K}(x_j,x_k)N_j(t) N_k(t)+\mathcal{O}(\Delta x_i^3).
	\end{align*}
	Therefore, we have
	\begin{align}\label{3_14}
		B_i(t)=\hat{B}_i(t)+\mathcal{O}(\Delta x_i^3).
	\end{align}
	Similarly, the death term \eqref{3_12} can be written as
	\begin{align}
		D_i(t)=\sum_{j=1}^{I}\K(x_i,x_j)N_i(t)N_j(t)+\mathcal{O}(\Delta x_i^3)
		=\hat{D}_i(t)+\mathcal{O}(\Delta x_i^3),
	\end{align}
	and  the volume flux is written as
	\begin{align*}
		{V}_i(t)=\sum_{k=1}^{I} \sum_{j=i}^{I}  \K(x_j,x_k) N_j(t) N_k(t) \int_{x_{i-1/2}}^{p_j^i}x \beta(x|x_j,x_k)\dd x +\mathcal{O}(\Delta x_i^3)
		=\hat{V}_i(t)+\mathcal{O}(\Delta x_i^3).
	\end{align*}
\end{proof}
We now recall the discrete scheme VAM and simplify  each term in the scheme separately. By  using the definition of $\lambda_i$ and  $\mathcal{\hat{B}}_i(t)$,   we can write the following term as 
\begin{align}
	\lambda_i^-(\bar{v}_{i-1})\hat{B}_{i-1}(t)=\frac{\bar{v}_{i-1}-{x}_{i-1}}{{x}_{i}-{x}_{i-1}}\hat{B}_{i-1}(t)=\frac{2}{\Delta x_i+\Delta x_{i-1}}\left[\bar{v}_{i-1}\hat{B}_{i-1}(t)-{x}_{i-1}\hat{B}_{i-1}(t)\right].\label{3_16}
\end{align}
Substituting the values of $\hat{B}_{i-1}(t)$  \eqref{2_5} and $\bar{v}_{i-1}$ \eqref{2_11}  in  equation \eqref{3_16}, we obtain
\allowdisplaybreaks
\begin{align}
	\lambda_i^-(\bar{v}_{i-1})\hat{B}_{i-1}(t)=&\frac{2}{\Delta x_i+\Delta x_{i-1}}\bigg[\sum_{k=1}^{I} \sum_{j=i-1}^{I}  \K(x_j,x_k)\hat{ N}_j(t)\hat{ N}_k (t)\int_{x_{i-3/2}}^{p_j^{i-1}}x \beta(x|x_j;x_k)\dd x\notag\\
	& \hspace{2.6cm}-x_{i-1}\sum_{k=1}^{I}\sum_{j=i-1}^{I} \mathcal{K}(x_j,x_k) \hat{N}_j(t) \hat{N}_k(t) \int_{x_{i-3/2}}^{p^{i-1}_{j}} \beta(x|x_j;x_k)\dd x\bigg] \notag\\
	=&\frac{2}{\Delta x_i+\Delta x_{i-1}}\bigg[\sum_{k=1}^{I} \sum_{j=i-1}^{I}  \K(x_j,x_k)\hat{ N}_j(t) \hat{N}_k(t) \int_{x_{i-3/2}}^{p_j^{i-1}}(x-x_{i-1}) \beta(x|x_j;x_k)\dd x\bigg]\notag\\
	=&\frac{2}{\Delta x_i+\Delta x_{i-1}}\bigg[\sum_{k=1}^{I}\K(x_{i-1},x_k) \hat{N}_{i-1}(t) \hat{N}_k(t) \int_{x_{i-3/2}}^{x_{i-1}}(x-x_{i-1}) \beta(x|x_{i-1};x_k)\dd x\notag\\
	&\hspace{2.6cm} +\sum_{k=1}^{I} \sum_{j=i}^{I}  \K(x_j,x_k) \hat{N}_j(t)\hat{ N}_k(t) \int_{x_{i-3/2}}^{x_{i-1/2}}(x-x_{i-1}) \beta(x|x_j;x_k)\dd x\bigg]\label{3_17}.
\end{align}
Assume  sufficient smoothness on $\beta$ with respect to $x$ and consider $g(x):=(x-x_{i-1}) \beta(x|x_{i-1};x_k)$.
Applying Taylor series expansions about $x=x_{i-1}$ of function  $g(x)$ having nonzero derivatives upto second order, we obtain
\begin{align}\label{3_18}
g(x)
=g(x_{i-1})+(x-x_{i-1})g'(x_{i-1})
+\mathcal{O}(\Delta x^2_i)=(x-x_{i-1})\beta(x_{i-1}|x_{i-1};x_k)+\mathcal{O}(\Delta x^2_i).
\end{align}
By applying the similar argument on $h(x):=(x-x_{i-1}) \beta(x|x_j;x_k)$, we  obtain
\begin{align}\label{3_19}
h(x)=(x-x_{i-1})\beta(x_{i-1}|x_j;x_k)+(x-x_{i-1})^2\beta_x(x_{i-1}|x_j;x_k)+\mathcal{O}(\Delta x_i^3).
\end{align}
Now substituting the estimates of $g$ and $h$ in \eqref{3_17} and using \eqref{2_3}, it yields
\begin{align*}
\lambda_i^-(\bar{v}_{i-1})\hat{B}_{i-1}(t)=&\frac{2}{\Delta x_i+\Delta x_{i-1}}\bigg[\sum_{k=1}^{I} \K(x_{i-1},x_k) \hat{N}_{i-1}(t) \hat{N}_k(t) \int_{x_{i-3/2}}^{x_{i-1}}g(x)\dd x\\
&\hspace{2.6cm}	+ 
\sum_{k=1}^{I} \sum_{j=i}^{I}  \K(x_j,x_k) \hat{N}_j(t) \hat{N}_k(t)\int_{x_{i-3/2}}^{x_{i-1/2}}h(x)\dd x\bigg]
+\mathcal{O}(\Delta x_i^3)\\
=&\frac{2}{\Delta x_i+\Delta x_{i-1}}\bigg(-\frac{1}{8}\sum_{k=1}^{I}\beta(x_{i-1}|x_{i-1};x_k) \K(x_{i-1},x_k) \hat{N}_{i-1}(t) \hat{N}_k (t)\Delta x^2_{i-1}\\ &\hspace{2.6cm}+
\frac{1}{12}\sum_{k=1}^{I} \sum_{j=i}^{I}\beta_x(x_{i-1}|x_j;x_k)\K(x_j,x_k) \hat{N}_j(t) \hat{N}_k (t)\Delta x^3_{i-1}\bigg)+\mathcal{O}(\Delta x_i^3).
\end{align*}
Let $\bar{h}(x_{i-1})=\beta(x_{i-1}|x_{i-1};x_k) \K(x_{i-1},x_k)N_{i-1}(t)$ and $\bar{g}(x_{i-1})=\beta_x(x_{i-1}|x_j;x_k)$ from the above equation. 
Applying  forwarded difference  on $\bar{h}$ and $\bar{g}$ about $x_i$ in the above equation,
we obtain
\begin{align*}
\lambda_i^-(\bar{v}_{i-1})\hat{B}_{i-1}(t)=&-\frac{\Delta x^2_{i-1}}{\Delta x_i+\Delta x_{i-1}}\times\frac{1}{4}\sum_{k=1}^{I}\beta(x_{i}|x_{i};x_k) \K(x_{i},x_k) \hat{N}_{i}(t) \hat{N}_k (t)\\ &+\frac{\Delta x^3_{i-1}}{\Delta x_i+\Delta x_{i-1}}\times
\frac{1}{6}\sum_{k=1}^{I} \sum_{j=i}^{I}\beta_x(x_{i}|x_j;x_k)\K(x_j,x_k) \hat{N}_j(t) \hat{N}_k (t)+\mathcal{O}(\Delta x_i^3).		
\end{align*}
Similarly, we can obtain the second term as
\begin{align*}
\lambda_i^+(\bar{v}_{i})\hat{B}_{i}(t)=&\frac{\bar{v}_{i}-{x}_{i+1}}{{x}_{i}-{x}_{i+1}}\hat{B}_{i}(t)=\bigg(1-\frac{\bar{v}_{i}-{x}_{i}}{{x}_{i+1}-{x}_{i}}\bigg)\hat{B}_i(t)\\
=&\hat{B}_i(t)+ \frac{\Delta x^2_{i}}{\Delta x_i+\Delta x_{i+1}}
\times	\frac{1}{4}\sum_{k=1}^{I}\beta(x_{i}|x_{i};x_k) \K(x_{i},x_k) \hat{N}_{i}(t) \hat{N}_k (t)\\&-\frac{\Delta x^3_{i}}{\Delta x_i+\Delta x_{i+1}}\times\frac{1}{6}\sum_{k=1}^{I} \sum_{j=i+1}^{I}\beta_x(x_{i}|x_j;x_k)\K(x_j,x_k) \hat{N}_j(t) \hat{N}_k (t)+\mathcal{O}(\Delta x_i^3).
\end{align*}
As in previous way, the third term is
\begin{align*}
\lambda_i^-(\bar{v}_{i})\hat{B}_{i}(t)=&\frac{\bar{v}_{i}-{x}_{i-1}}{{x}_{i}-{x}_{i-1}}\hat{B}_{i}(t)=\bigg(1+\frac{\bar{v}_{i}-{x}_{i}}{{x}_{i}-{x}_{i-1}}\bigg)\hat{B}_i(t)\\
=&\frac{\bar{v}_{i}-{x}_{i+1}}{{x}_{i}-{x}_{i+1}}\hat{B}_{i}(t)=\bigg(1-\frac{\bar{v}_{i}-{x}_{i}}{{x}_{i+1}-{x}_{i}}\bigg)\hat{B}_i(t)\\
=&\hat{B}_i(t)-\frac{\Delta x^2_{i}}{\Delta x_i+\Delta x_{i+1}}\times \frac{1}{4}
\sum_{k=1}^{I}\beta(x_{i}|x_{i};x_k) \K(x_{i},x_k) \hat{N}_{i}(t) \hat{N}_k (t)\\&+\frac{\Delta x^3_{i}}{\Delta x_i+\Delta x_{i+1}}\times\frac{1}{6}\sum_{k=1}^{I} \sum_{j=i+1}^{I}\beta_x(x_{i}|x_j;x_k)\K(x_j,x_k) \hat{N}_j(t) \hat{N}_k (t)+\mathcal{O}(\Delta x_i^3).		
\end{align*}
The fourth term is given by
\begin{align*}
\lambda_i^-(\bar{v}_{i+1})\hat{B}_{i+1}(t)=&\frac{\bar{v}_{i}-{x}_{i+1}}{{x}_{i}-{x}_{i+1}}\hat{B}_{i}(t)\\=&\frac{\Delta x^2_{i+1}}{\Delta x_i+\Delta x_{i+1}}\times
\frac{1}{4}\sum_{k=1}^{I}\beta(x_{i}|x_{i};x_k) \K(x_{i},x_k) \hat{N}_i(t)\hat{N}_k(t)\\ &
-\frac{\Delta x^3_{i+1}}{\Delta x_i+\Delta x_{i+1}}\times\frac{1}{6}\sum_{k=1}^{I} \sum_{j=i+2}^{I}\beta_x(x_{i}|x_j;x_k)\K(x_j,x_k)\hat{N}_j(t)\hat{N}_k(t)+\mathcal{O}(\Delta x_i^3).
\end{align*}
Without loss of generality, the summation appearing in all  the above four expressions can be started from $(i+2)$ since the terms being omitted have $4$th order accuracy.
By  error estimation Lemma \ref{lem_3_1}  and setting  $F(x|y;z):=\beta(x|y;z)\K(y,z)$ and $F_x(x|y;z):= \beta_x(x|y;z)\K(y,z)$, all four expressions rewritten in more simplified form as
\begin{align*}
\lambda_i^-(\bar{v}_{i-1})\hat{B}_{i-1}(t)=&- \frac{\Delta x^2_{i-1}}{\Delta x_i+\Delta x_{i-1}}\times\frac{1}{4}\sum_{k=1}^{I}F(x_{i}|x_{i};x_k)\hat{N}_i(t)\hat{N}_k(t)\\&+\frac{\Delta x^3_{i-1}}{\Delta x_i+\Delta x_{i-1}} \times\frac{1}{6}
\sum_{k=1}^{I} \sum_{j=i+2}^{I}F_x(x_{i}|x_j;x_k)\hat{N}_j(t)\hat{N}_k(t)+\mathcal{O}(\Delta x_i^3),
\end{align*}
\begin{align*}
\lambda_i^+(\bar{v}_{i})\hat{B}_{i}(t)	=&B_i(t)+\frac{\Delta x^2_{i}}{\Delta x_i+\Delta x_{i+1}}\times\frac{1}{4}\sum_{k=1}^{I}F(x_{i}|x_{i};x_k)\hat{N}_i(t)\hat{N}_k(t)\\&-\frac{\Delta x^3_{i}}{\Delta x_i+\Delta x_{i+1}}\times\frac{1}{6} 
\sum_{k=1}^{I} \sum_{j=i+2}^{I}F_x(x_{i}|x_j;x_k)\hat{N}_j(t)\hat{N}_k(t)+\mathcal{O}(\Delta x_i^3),
\end{align*}
\begin{align*}
\lambda_i^-(\bar{v}_{i})\hat{B}_{i}(t)	=&B_i(t)-\frac{\Delta x^2_{i}}{\Delta x_i+\Delta x_{i-1}}\times\frac{1}{4}\sum_{k=1}^{I}F(x_{i}|x_{i};x_k)\hat{N}_i(t)\hat{N}_k(t)\\&+\frac{\Delta x^3_{i}}{\Delta x_i+\Delta x_{i-1}}\times\frac{1}{6} 
\sum_{k=1}^{I} \sum_{j=i+2}^{I}F_x(x_{i}|x_j;x_k)\hat{N}_j(t)\hat{N}_k(t)+\mathcal{O}(\Delta x_i^3)
\end{align*}
and
\begin{align*}
\lambda_i^+(\bar{v}_{i+1})\hat{B}_{i+1}(t)=& \frac{\Delta x^2_{i+1}}{\Delta x_i+\Delta x_{i+1}}\times\frac{1}{4}\sum_{k=1}^{I}F(x_{i}|x_{i};x_k)\hat{N}_i(t)\hat{N}_k(t)\\&-\frac{\Delta x^3_{i+1}}{\Delta x_i+\Delta x_{i+1}}\times\frac{1}{6}
\sum_{k=1}^{I} \sum_{j=i+2}^{I}F_x(x_{i}|x_j;x_k)\hat{N}_j(t)\hat{N}_k(t)+\mathcal{O}(\Delta x_i^3).
\end{align*}
Depending on the position of volume average $\bar{v}_i$, there arise two cases arise: (see  Figure \ref{fig_5})\\
\begin{figure}[htpb!]
\centering
\includegraphics[width=\textwidth]{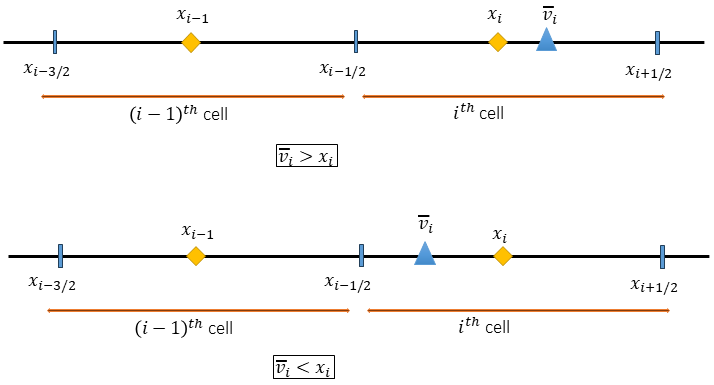}
\caption{$\bar{v}_{i}>x_{i}$ and $\bar{v}_{i}<x_{i}$}\label{fig_5}
\label{D61}
\end{figure}
\textbf{Case I}:
Consider the particle  average volume $\bar{v}_i$ in the  right side of the pivot $x_i$ that is  \\ $\ds\bar{v}_{i-1}> x_{i-1},\; \bar{v}_{i}>x_{i},\; \bar{v}_{i+1}\geq x_{i+1}$. Then 
\begin{align*}
\mathcal{\hat{B}}_{i}(t)=&{B}_{i}(t)+\bigg(\frac{\Delta x^2_{i}}{\Delta x_i+\Delta x_{i+1}}-\frac{\Delta x^2_{i-1}}{\Delta x_i+\Delta x_{i-1}}\bigg)
\times\frac{1}{4}\sum_{k=1}^{I}F(x_{i}|x_{i};x_k)\hat{N}_i(t)\hat{N}_k(t)\\&-\bigg(\frac{\Delta x^3_{i}}{\Delta x_i+\Delta x_{i+1}}-\frac{\Delta x^3_{i-1}}{\Delta x_i+\Delta x_{i-1}}\bigg)\times\frac{1}{6}
\sum_{k=1}^{I} \sum_{j=i+2}^{I}F_x(x_{i}|x_j;x_k)\hat{N}_j(t)\hat{N}_k(t)+\mathcal{O}(\Delta x_i^3).
\end{align*}
\textbf{Case II}:	
Consider the particle  average volume $\bar{v}_i$ in the  left side of the pivot $x_i$  that is\\ $\ds\bar{v}_{i-1}\leq x_{i-1},\; \bar{v}_{i}<x_{i},\; \bar{v}_{i+1}<x_{i+1}$. Then
\begin{align*}
\mathcal{\hat{B}}_{i}(t)=&{B}_{i}(t)+\bigg(\frac{\Delta x^2_{i+1}}{\Delta x_i+\Delta x_{i+1}}-\frac{\Delta x^2_{i}}{\Delta x_i+\Delta x_{i-1}}\bigg)\times
\frac{1}{4}\sum_{k=1}^{I}F(x_{i}|x_{i};x_k) \hat{N}_i(t)\hat{N}_k(t)\\&-\bigg(\frac{\Delta x^3_{i+1}}{\Delta x_i+\Delta x_{i+1}}-\frac{\Delta x^3_{i}}{\Delta x_i+\Delta x_{i-1}}\bigg)\times\frac{1}{6}
\sum_{k=1}^{I} \sum_{j=i+2}^{I}F_x(x_{i}|x_j;x_k)\hat{N}_j(t)\hat{N}_k(t)+\mathcal{O}(\Delta x_i^3).
\end{align*}
\begin{Remark}\label{rem_3_1}
Consider breakage kernel $\beta(x|y;z)=\ds \frac{12x}{y^2}\left(1-\frac{x}{y}\right)$,  therefore $F(x_{i}|x_{i};x_k)=0$. Then,  \textbf{Case I} gives

\begin{align*}
\mathcal{\hat{B}}_{i}(t)= &{B}_{i}(t)-\bigg(\frac{\Delta x^3_{i}}{\Delta x_i+\Delta x_{i+1}}-\frac{\Delta x^3_{i-1}}{\Delta x_i+\Delta x_{i-1}}\bigg)\times\frac{1}{6}
\sum_{k=1}^{I} \sum_{j=i+2}^{I}F_x(x_{i}|x_j;x_k)\hat{N}_j(t)\hat{N}_k(t)+\mathcal{O}(\Delta x_i^3),
\end{align*}
and	 \textbf{Case II} gives
\begin{align*}
\mathcal{\hat{B}}_{i}(t)={B}_{i}(t)-\bigg(\frac{\Delta x^3_{i+1}}{\Delta x_i+\Delta x_{i+1}}-\frac{\Delta x^3_{i}}{\Delta x_i+\Delta x_{i-1}}\bigg)\times\frac{1}{6}
\sum_{k=1}^{I} \sum_{j=i+2}^{I}F_x(x_{i}|x_j;x_k)\hat{N}_j(t)\hat{N}_k(t)+\mathcal{O}(\Delta x_i^3).
\end{align*}
\end{Remark}
In order to investigate the consistency of the scheme, we will use these following results:
\begin{Lemma}\label{lem_3_2}
Let $0<h\in\mathbb{R}^+$ and $B(x,t)\in\mathcal{C}^1(\mathbb{R}^+\times\mathbb{R}^+)$, then for all $v\in\mathbb{R}^+$ we have
\begin{align*}
\int_{v-h}^{v+h}(v-x)B(x,t)\dd x
\begin{cases}
	\geq 0, &\hspace{0.1cm}\mbox{if}\quad B(x,t)\quad\mbox{is monotonically decreasing for}\hspace{0.2cm}x\in[v-h,v+h]\\
	\le 0, &\hspace{0.1cm}\mbox{if}\quad B(x,t)\quad\mbox{is monotonically increasing for}\hspace{0.2cm} x\in [v-h,v+h].
\end{cases}
\end{align*}
\end{Lemma}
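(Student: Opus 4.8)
The plan is to exploit the antisymmetry of the weight $(v-x)$ about the midpoint $x=v$: it is nonnegative on $[v-h,v]$ and nonpositive on $[v,v+h]$, with mirror-image behaviour on the two halves. First I would split the integral at $x=v$,
\[
\int_{v-h}^{v+h}(v-x)B(x,t)\,\dd x=\int_{v-h}^{v}(v-x)B(x,t)\,\dd x+\int_{v}^{v+h}(v-x)B(x,t)\,\dd x,
\]
so that the left half carries a nonnegative weight and the right half a nonpositive one, and then fold the right half onto the left.

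The folding is done by the reflection substitution $u=2v-x$ on the second integral (so $x=2v-u$, $\dd x=-\dd u$, and $v-x=u-v$). As $x$ runs from $v$ to $v+h$, the variable $u$ runs from $v$ to $v-h$, and the second integral becomes $-\int_{v-h}^{v}(v-u)B(2v-u,t)\,\dd u$. Renaming $u$ back to $x$ and combining with the first integral collapses everything into the single expression
\[
\int_{v-h}^{v+h}(v-x)B(x,t)\,\dd x=\int_{v-h}^{v}(v-x)\bigl[B(x,t)-B(2v-x,t)\bigr]\,\dd x.
\]

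The sign is now transparent. On $[v-h,v]$ the factor $(v-x)$ is nonnegative, and the reflected point satisfies $2v-x\ge x$ (since $2v-x-x=2(v-x)\ge 0$); thus $x$ and its mirror image lie on the same side with $x\le 2v-x$. If $B(\cdot,t)$ is monotonically decreasing, then $B(x,t)\ge B(2v-x,t)$, so the bracket is nonnegative, the whole integrand is nonnegative, and the integral is $\ge 0$. If $B(\cdot,t)$ is monotonically increasing, the bracket is nonpositive and the integral is $\le 0$. This settles both cases simultaneously.

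The argument is elementary, so I do not expect a genuine obstacle; the only points requiring care are the bookkeeping of limits and signs under the reflection $u=2v-x$ (getting the minus sign and the reversed orientation right) and the observation that the relevant comparison is $x\le 2v-x$, so that the monotonicity of $B$ on the whole window $[v-h,v+h]$ controls the sign of the bracket. I would also remark that the statement tacitly requires $[v-h,v+h]$ to lie in the domain of $B$ (that is, $v\ge h$, which holds in the intended application where $v$ is a pivot and $h$ a local cell half-width), and that the $\mathcal{C}^1$ hypothesis is not actually used beyond continuity of $B$, since the proof invokes only the prescribed monotonicity.
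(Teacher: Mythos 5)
Your proof is correct. It differs from the paper's argument in a genuine, if modest, way: the paper expands $B(x,t)=B(v,t)+(x-v)B_x(\theta,t)$ by Taylor's theorem, notes that $\int_{v-h}^{v+h}(v-x)\,\dd x=0$ kills the constant term, and concludes from the sign of $-\int_{v-h}^{v+h}(x-v)^2B_x(\theta,t)\,\dd x$ using $B_x\le 0$ (respectively $\ge 0$); you instead fold the integral by the reflection $u=2v-x$ to obtain
\begin{align*}
\int_{v-h}^{v+h}(v-x)B(x,t)\,\dd x=\int_{v-h}^{v}(v-x)\bigl[B(x,t)-B(2v-x,t)\bigr]\,\dd x
\end{align*}
and read off the sign of the bracket directly from monotonicity. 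Both routes exploit the antisymmetry of the weight $(v-x)$ about the midpoint, but yours dispenses with differentiability entirely — continuity plus monotonicity suffices, which you correctly point out makes the $\mathcal{C}^1$ hypothesis superfluous — whereas the paper's version leans on the mean value form of the remainder and implicitly on the ($x$-dependent) point $\theta$ being handled consistently inside the integral. Your closing remark that the statement tacitly needs $v\ge h$ so that $[v-h,v+h]$ stays in the domain of $B$ is a fair observation that the paper does not make explicit. No gaps; the bookkeeping of limits and signs in your substitution is right.
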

\begin{proof}
Using Taylor's series, we have
\begin{align}\label{3_20}
B(x,t)=B(v,t)+(v-x)B_x(\theta,t), \quad\text{for}\quad\theta\in[v-h,v+h].
\end{align}
Therefore, using \eqref{3_20} we have
\begin{align}\label{3_21}
\int_{v-h}^{v+h}(v-x)B(x,t)\dd x=\int_{v-h}^{v+h}(v-x)\left[B(v,t)+ (x-v)B_x(\theta,t)\right]\dd x,\quad\text{for}\quad\theta\in[v-h,v+h].
\end{align}
Since $B(x,t)$ is monotonically decreasing with respect to $x$ for $x\in [v-h,v+h]$, we obtain $B_x(\theta,t)\le 0$. Moreover, evaluating the integrals in equation \eqref{3_21}, we get 
\begin{align*}
\int_{v-h}^{v+h}(v-x)B(x,t)\dd x=-B(v,t)\int_{v-h}^{v+h}(x-v)\dd x- \int_{v-h}^{v+h} (x-v)^2 B_x(\theta,t)\dd x\geq0.
\end{align*}
Proceeding as first part, we can also prove the second part.
\end{proof}
\begin{Lemma}\label{lem_3_3} 
Assume that $\Sigma$ be subset of $\mathbb{R}^+$. 
Let $\beta(x|y;z):\mathcal{C}	(\Sigma^3)\to \mathbb{R}^+$,  $\K(y,z):\mathcal{C}(\Sigma^2)\to \mathbb{R}^+$ and $n(x,t):\mathcal{C}	(\Sigma\times\mathbb{R}^+)\to \mathbb{R}^+$. If the birth rate function 
\begin{align*}
B(x,t)=\int_{0}^{x_{\max}} \int_{x}^{x_{\max}} \beta(x|y;z) \mathcal{K}(y,z) n(y,t)n(z,t) \dd y \dd z,
\end{align*}
has finitely many oscillations (at the most a finite number of maxima and minima) in $\Sigma$ at any time $t$, then the expression $x_i-\bar{v}_i$, $i=1,2,\dots,I$ defined by using the equations \eqref{2_5} and \eqref{2_10}-\eqref{2_11}  
\begin{align*}
x_i-\bar{v}_i=\frac{x_i\hat{B}_i-\hat{V}_i}{\hat{B}_i}, \quad \hat{B}_i>0,
\end{align*}
changes its sign at most finitely many times for $\Delta x$ sufficiently small. 
\end{Lemma}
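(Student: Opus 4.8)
The plan is to reduce the sign behaviour of the discrete sequence $\{x_i-\bar{v}_i\}_{i=1}^{I}$ to the monotonicity structure of the continuous birth rate $B(x,t)$, and then convert the finite-oscillation hypothesis on $B$ into a bound on the number of transitions. First I would combine the definitions of $\hat{B}_i$ \eqref{2_5} and $\hat{V}_i$ \eqref{2_10} into a single weighted form,
\[
x_i\hat{B}_i-\hat{V}_i=\sum_{k=1}^{I}\sum_{j=i}^{I}\K(x_j,x_k)N_j(t)N_k(t)\int_{x_{i-1/2}}^{p_j^i}(x_i-x)\beta(x|x_j;x_k)\,\dd x .
\]
Since $\hat{B}_i>0$ we have $\sgn(x_i-\bar{v}_i)=\sgn(x_i\hat{B}_i-\hat{V}_i)$, so it suffices to track the sign of this expression. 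Using the error estimates of Lemma \ref{lem_3_1} applied to $B_i$ and $V_i$ from \eqref{3_12}, I would identify this with the purely continuous quantity
\[
x_iB_i-V_i=\int_{x_{i-1/2}}^{x_{i+1/2}}(x_i-x)B(x,t)\,\dd x ,
\]
up to a discretisation residual, where $B(x,t)$ is exactly the birth rate appearing in the lemma.

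Next I would invoke Lemma \ref{lem_3_2}. Because $x_i$ is the midpoint of $\Lambda_i=[x_{i-1/2},x_{i+1/2}]$, the cell is symmetric about $x_i$ with half-width $h=\Delta x_i/2$, so Lemma \ref{lem_3_2} applies with $v=x_i$: the integral $\int_{x_{i-1/2}}^{x_{i+1/2}}(x_i-x)B(x,t)\,\dd x$ is $\geq 0$ when $B(\cdot,t)$ is nonincreasing on $\Lambda_i$ and $\le 0$ when it is nondecreasing there. Thus, to leading order, $\sgn(x_i-\bar{v}_i)$ is governed purely by whether $B(\cdot,t)$ is decreasing or increasing across the cell $\Lambda_i$.

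The counting step then uses the hypothesis directly. Because $B(\cdot,t)$ possesses only finitely many maxima and minima on $\Sigma$, the interval $[0,x_{\max}]$ splits into finitely many maximal intervals of monotonicity $\mathcal{I}_1,\dots,\mathcal{I}_m$ separated by the finitely many extrema of $B$. On every cell $\Lambda_i$ contained in a single block $\mathcal{I}_\ell$ the sign of $x_i-\bar{v}_i$ is constant — positive throughout a decreasing block, negative throughout an increasing block — so no sign change can occur inside a monotone block. A sign change of $\{x_i-\bar{v}_i\}_{i=1}^{I}$ can therefore be produced only by a cell straddling one of the $m-1$ interfaces between consecutive blocks, and for $\Delta x$ sufficiently small each extremum is met by at most a bounded number of cells; hence the total number of sign changes is of order $m$, finite and independent of $I$.

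The hard part is the error control buried in the first step. The leading term $\int_{\Lambda_i}(x_i-x)B\,\dd x$ is itself $\mathcal{O}(\Delta x_i^{3})$ (it behaves like $-\tfrac{1}{12}B_x(x_i,t)\,\Delta x_i^{3}$), which is the same order as the discretisation residual inherited from Lemma \ref{lem_3_1}; the residual therefore cannot be discarded by shrinking $\Delta x$ alone. This is precisely the regime near the extrema of $B$, where $B_x(x_i,t)\to 0$ and the leading term degenerates. The delicate point is to show that the cells in which the residual can overturn the monotone sign prediction are confined to neighbourhoods of the finitely many extrema of $B$, so that they contribute only a bounded number of additional sign changes, while away from the extrema — where $B(\cdot,t)$ is strictly monotone — the leading term is robustly signed and the prediction of Lemma \ref{lem_3_2} persists. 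Establishing this localisation, rather than the clean monotone-block bookkeeping, is where the genuine work lies.
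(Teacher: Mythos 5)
Your proposal follows essentially the same route as the paper's proof: pass from $x_i-\bar v_i$ to the continuous quantity $\int_{x_{i-1/2}}^{x_{i+1/2}}(x_i-x)B(x,t)\,\dd x$ via the error estimates of Lemma \ref{lem_3_1}, fix the sign on each cell with Lemma \ref{lem_3_2} according to whether $B(\cdot,t)$ is increasing or decreasing, and then count sign changes by splitting $[0,x_{\max}]$ into the finitely many monotone blocks guaranteed by the hypothesis. The ``hard part'' you flag at the end --- that the leading term is itself $\mathcal{O}(\Delta x_i^3)$ and so of the same order as the discretisation residual, which therefore cannot be neglected merely by shrinking $\Delta x$ --- is a genuine subtlety, but the paper's own proof does not address it either; it simply asserts the sign conclusion ``for sufficiently small $\Delta x$,'' so your version is at least as careful as the original.
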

\begin{proof}
First assume  that $B(x,t)$	has finitely many oscillations (at the most a finite number of maxima and minima) in domain $\Sigma$ at any time $t$. So,  domain $\Sigma$ can be divided into  $m$ sub-domains $\Sigma_1, \Sigma_2,\dots,\Sigma_m$ such that in each sub-domain $\Sigma_k$ for $k=1,2,\dots,m$ the function $B(x,t)$ is either monotonically decreasing or increasing.
By Lemma \ref{lem_3_1}, we obtain

\begin{align}
x_i-\bar{v}_i=&\frac{x_iB_i-V_i+\mathcal{O}(\Delta x_i^3)}{\hat{B}_i}\notag\\
=&  \frac{\ds \int_{x_{i-1/2}}^{x_{i+1/2}}\ds \int_{0}^{x_{\max}}\ds  \int_{x}^{x_{\max}} (x_i-x) \beta(x|y;z) \mathcal{K}(y,z) n(y,t)n(z,t) \dd y \dd z \dd x+\mathcal{O}(\Delta x_i^3)}{\ds \hat{B}_i}\notag\\
=&\frac{\ds \int_{x_{i-1/2}}^{x_{i+1/2}}(x_i-x)B(x,t)\dd x+\mathcal{O}(\Delta x_i^3)}{\ds \hat{B}_i}.
\end{align}
For sufficiently small $\Delta x$ and Lemma \ref{lem_3_2}, we get that $	x_i-\bar{v}_i\geq0$ in any one sub-domain of the sub-domains $\Sigma_1, \Sigma_2,\dots,\Sigma_m$ where the function $B(x,t)$ is monotonically decreasing and $x_i-\bar{v}_i\le0$ in that sub-domain where  the function $B(x,t)$ is monotonically increasing. Since we divide the domain into finite $m$ sub-domains $\Sigma_1, \Sigma_2,\dots,\Sigma_m$, the  change of the sign of the function $x_i-\bar{v}_i$ is finite.
This completes the proof.
\end{proof}
In order to investigate the consistency,	we are now here to provide the discretization error of the collisional nonlinear breakage equation by cell average volume method as follows.

\subsection{Discretization error}
The spatial truncation error is given by for $i=1,2,\dots,I$,
\begin{align}
	\sigma_i(t)=\frac{\dd N_i(t)}{\dd t}-\frac{\dd\hat{ N}_i(t)}{\dd t}=\left[{B}_i(t)-\hat{\mathcal{B}}_i(t)\right]-\left[{D}_i(t)-\hat{\mathcal{D}}_i(t)\right]
\end{align}
We can write
\begin{equation}
	\sigma_i(t)=
	\begin{cases}
		\vspace{0.5cm} \ds C_i\left(\frac{\ds \Delta x^2_{i}}{\ds \Delta x_i+\Delta x_{i+1}}-\frac{\Delta x^2_{i-1}}{\Delta x_i+\Delta x_{i-1}}\right)
		+ \mathcal{O}(\Delta x_i^2), & \quad\mbox{if}\quad \bar{v}_{i-1}>x_{i-1},\;\; \bar{v}_i>x_i,\;\; \bar{v}_{i+1}\geq x_{i+1},\\ \vspace{0.5cm}
		\ds	C_i\left( \frac{\Delta x^2_{i+1}}{\Delta x_i+\Delta x_{i+1}}-\frac{\Delta x^2_{i}}{\Delta x_i+\Delta x_{i-1}}\right)
		+\mathcal{O}(\Delta x_i^2),&\quad\text{if}\quad \bar{v}_{i-1}\le x_{i-1},\;\; \bar{v}_i<x_i,\;\; \bar{v}_{i+1}<x_{i+1},\\ \vspace{0.5cm}
		\ds	\mathcal{O}(\Delta x_i^3),&\quad\mbox{if}\quad \bar{v}_{i-1}\le x_{i-1},\;\; \bar{v}_i=x_i,\;\; \bar{v}_{i+1}\geq x_{i+1},\\ \vspace{0.5cm}
		\ds	\mathcal{O}(\Delta x_i^2),&\quad\mbox{else,}\quad i=1,I_1,I_2,\dots,I_m,I,
	\end{cases}
\end{equation}
where \;
$$C_i=\ds \frac{1}{4}\sum_{k=1}^{I}F(x_{i}|x_{i};x_k)\hat{N}_i(t)\hat{N}_k(t).$$ 
\begin{Remark}\label{rem_3_2}
	Extending the Remark \ref{rem_3_1}, the spatial truncation error is obtained as
	
	\begin{equation}
		\sigma_i(t)=
		\begin{cases}
			\vspace{0.5cm} \ds C'_i\left(\frac{\ds \Delta x^3_{i}}{\ds \Delta x_i+\Delta x_{i+1}}-\frac{\Delta x^3_{i-1}}{\Delta x_i+\Delta x_{i-1}}\right)
			+ \mathcal{O}(\Delta x_i^3), & \quad\mbox{if}\quad \bar{v}_{i-1}>x_{i-1},\;\; \bar{v}_i>x_i,\;\; \bar{v}_{i+1}\geq x_{i+1},\\ \vspace{0.5cm}
			\ds C'_i\left( \frac{\Delta x^3_{i+1}}{\Delta x_i+\Delta x_{i+1}}-\frac{\Delta x^3_{i}}{\Delta x_i+\Delta x_{i-1}}\right)
			+\mathcal{O}(\Delta x_i^3),&\quad\text{if}\quad \bar{v}_{i-1}\le x_{i-1},\;\; \bar{v}_i<x_i,\;\; \bar{v}_{i+1}<x_{i+1},\\ \vspace{0.5cm}
			\ds	\mathcal{O}(\Delta x_i^3),&\quad\mbox{if}\quad \bar{v}_{i-1}\le x_{i-1},\;\; \bar{v}_i=x_i,\;\; \bar{v}_{i+1}\geq x_{i+1},\\ \vspace{0.5cm}
			\ds	\mathcal{O}(\Delta x_i^3),&\quad\mbox{else,}\quad i=1,I_1,I_2,\dots,I_m,I,
		\end{cases}
	\end{equation}\label{3_25_1}
	where 
	$$C'_i=-\ds\frac{1}{6} \sum_{k=1}^{I} \sum_{j=i+2}^{I}F_x(x_{i}|x_j;x_k)\hat{N}_j(t)\hat{N}_k(t).$$
\end{Remark}

\begin{figure}[htpb]
	\centering
	\includegraphics[width=\textwidth]{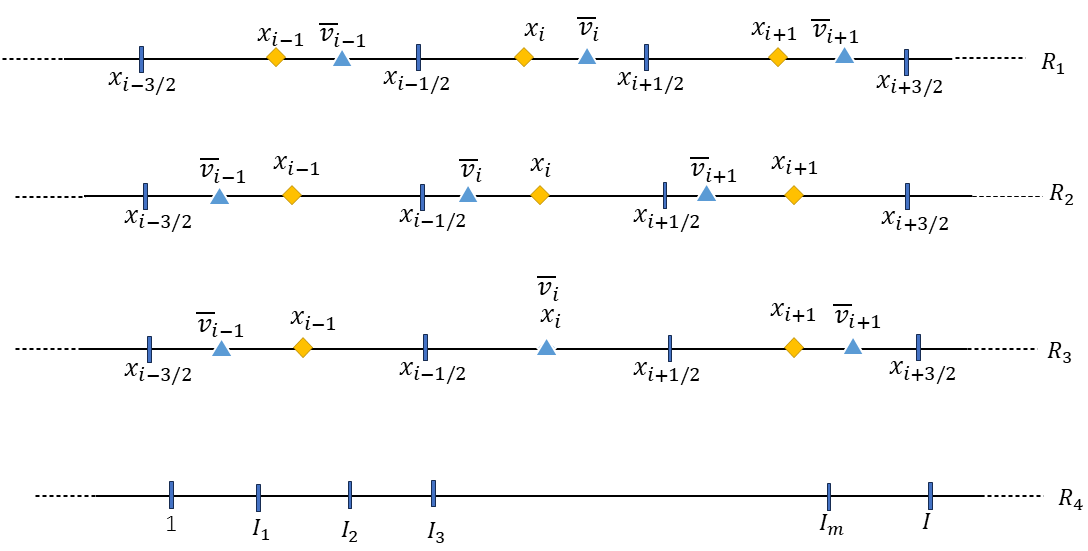}
	\caption{ Four different set of regions }\label{fig_6}
	\label{D52}
\end{figure}

Here, the four cases arise for the order of local error. The last case arises due to the sign
change of $x_i-\bar{v}_i$ and due to outer boundaries. Lemma \ref{lem_3_3} provides that the number of the sign changes of  $x_i-\bar{v}_i$ depends on the properties of the birth rate function $B(x,t)$. Since the sign of  $x_i-\bar{v}_i$ changes in $m$ cells, say, $I_1,I_2,\dots,I_m,$ so the order may deteriorate. Here, the number of cells remains finite, so the it does not lower the order of the numerical scheme.  In simplified form, we describe four cases as follows: (see Figure \ref{fig_6})\\
\begin{align*}
	{R}_1:=&\left\{i\in\mathbb{N}: \bar{v}_{i-1}>x_{i-1},\;\; \bar{v}_i>x_i,\;\; \bar{v}_{i+1}\geq x_{i+1}\right\},\\ 
	{R}_2:=&\left\{i\in\mathbb{N}: \bar{v}_{i-1}\le x_{i-1},\;\; \bar{v}_i<x_i,\;\; \bar{v}_{i+1}<x_{i+1}\right\},\\
	{R}_3:=&\left\{i\in\mathbb{N}: \bar{v}_{i-1}\le x_{i-1},\;\; \bar{v}_i=x_i,\;\; \bar{v}_{i+1}\geq x_{i+1}\right\},\\
	{R}_4:=&\left\{i\in\mathbb{N}: i=1,I_1,I_2,\dots,I_m,I\right\}.
\end{align*}

Then, the order of consistency is 
\begin{align}
	\qquad||\sigma(t)||=&\sum_{i\in{R}_1}|\sigma_i(t)|\sum_{i\in{R}_2}|\sigma_i(t)|+\sum_{i\in{R}_3}|\sigma_i(t)|+\sum_{i\in{R}_4}|\sigma_i(t)|\notag\\
	=&\sum_{i\in{R}_1}\left|C_i\left(\frac{\Delta x^2_{i}}{\Delta x_i+\Delta x_{i+1}}-\frac{\Delta x^2_{i-1}}{\Delta x_i+\Delta x_{i-1}}\right)\right|
	+\sum_{i\in{R}_2}\left|C_i
	\bigg	(\frac{\Delta x^2_{i+1}}{\Delta x_i+\Delta x_{i+1}}-\frac{\Delta x^2_{i}}{\Delta x_i+\Delta x_{i-1}}\bigg)\right|
	\notag\\&
	+|\sigma_1(t)|+\left(|\sigma_{I_1}(t)|+|\sigma_{I_2}(t)|+\dots+|\sigma_{I_m}(t)|\right)+|\sigma_I(t)|+\mathcal{O}(\Delta x^2)\notag\\
	=&\sum_{i\in{R}_1}\left|C_i\left(\frac{\Delta x^2_{i}}{\Delta x_i+\Delta x_{i+1}}-\frac{\Delta x^2_{i-1}}{\Delta x_i+\Delta x_{i-1}}\right)\right|+
	\sum_{i\in{R}_2}\left|C_i
	\bigg	(\frac{\Delta x^2_{i+1}}{\Delta x_i+\Delta x_{i+1}}-\frac{\Delta x^2_{i}}{\Delta x_i+\Delta x_{i-1}}\bigg)\right|
	\notag\\&+
	\mathcal{O}(\Delta x)\label{3_26}.
\end{align}
\subsubsection{Type A: Uniform grids}

\begin{figure}[htpb!]
	\centering
	\includegraphics[width=\textwidth]{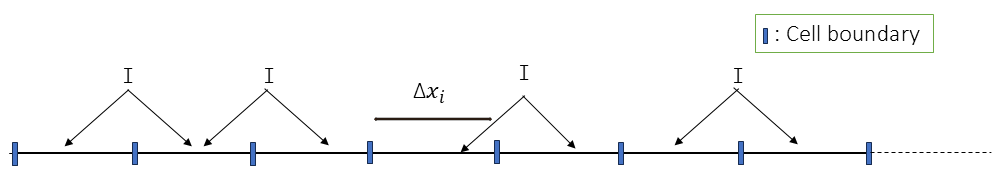}
	\caption{ Uniform  smooth grids }\label{fig_7}
	\label{D53}
\end{figure}

Over uniform grids, shown in Figure \ref{fig_7}, $\Delta x_i=\Delta x$ for $i=1,2,\dots,I.$ From \eqref{3_26} we obtain 
\begin{align}
	||\sigma(t)||=\mathcal{O}(\Delta x).
\end{align} 
\subsubsection{Type B: Nonuniform smooth grids}

Consider a smooth transformation $x=f(\varphi)$ to get nonuniform grid (see in Figure \ref{fig_8}),

\begin{figure}[htpb!]
	\centering
	\includegraphics[width=\textwidth]{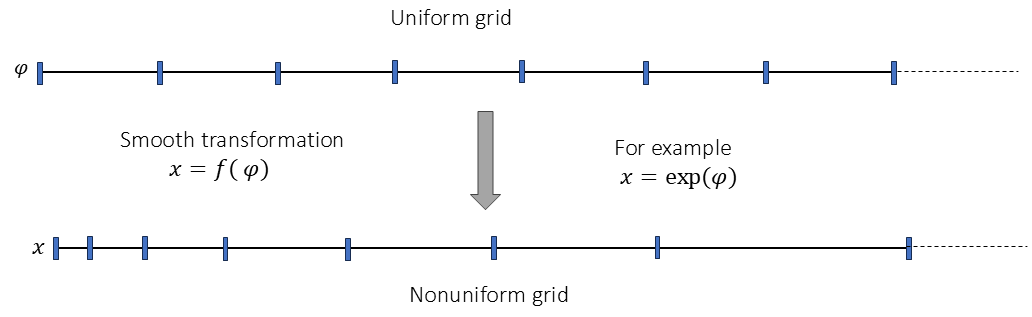}
	\caption{ Nonuniform  smooth grids }\label{fig_8}
	\label{D54}
\end{figure}

where $x_{i\pm1/2}=f(\varphi _i\pm h/2)$ for all $i=1,2,\dots,I$ and $\phi$ is a variable with uniform grids and let $h$ be the uniform grid width in the variable $\phi$. Now expanding the Taylor's series of $f$, we get
\begin{align*}
	\Delta x_i=x_{i+1/2}-x_{i-1/2}=f(\varphi _i+h/2)-f(\varphi _i-h/2)=hf'(\varphi_i)+\frac{h^3}{24}f'''(\varphi_i)+\mathcal{O}(h^4).
\end{align*}
Analogously, we obtain
\begin{align*}
	\Delta x_{i+1}=x_{i+3/2}-x_{i+1/2}=f(\varphi _i+3h/2)-f(\varphi _i+h/2)=hf'(\varphi_i)+h^2f''(\varphi_i)+\mathcal{O}(h^3)
\end{align*}
and
\begin{align*}
	\Delta x_{i-1}=x_{i-1/2}-x_{i-3/2}=f(\varphi _i-h/2)-f(\varphi _i-3h/2)=hf'(\varphi_i)-h^2f''(\varphi_i)+\mathcal{O}(h^3).
\end{align*}
By above approximations we have
\begin{align*}
	\Delta x_i+\Delta x_{i-1}=&2hf'(\varphi_i)-h^2f''(\varphi_i)+\mathcal{O}(h^3),\\
	\Delta x_i+\Delta x_{i+1}=&2hf'(\varphi_i)+h^2f''(\varphi_i)+\mathcal{O}(h^3).
\end{align*}
We can get  from the first term as
\begin{align*}
	\frac{\Delta x^2_{i}}{\Delta x_i+\Delta x_{i+1}}-\frac{\Delta x^2_{i-1}}{\Delta x_i+\Delta x_{i-1}}=\frac{2h^3(f'(\varphi_i))^3-2h^3(f'(\varphi_i))^3+\mathcal{O}(h^4)}{\mathcal{O}(h^2)}=\mathcal{O}(h^2).
\end{align*}
Similarly,
\begin{align*}
	\frac{\Delta x^2_{i+1}}{\Delta x_i+\Delta x_{i+1}}-\frac{\Delta x^2_{i}}{\Delta x_i+\Delta x_{i-1}}=\mathcal{O}(h^2).
\end{align*}
Therefore,
\begin{align*}
	\frac{\Delta x^2_{i}}{\Delta x_i+\Delta x_{i+1}}-\frac{\Delta x^2_{i-1}}{\Delta x_i+\Delta x_{i-1}}=&\mathcal{O}(\Delta x_i^2)\hspace{0.2cm}\text{and}\hspace{0.2cm}
	\frac{\Delta x^2_{i+1}}{\Delta x_i+\Delta x_{i+1}}-\frac{\Delta x^2_{i}}{\Delta x_i+\Delta x_{i-1}}=&\mathcal{O}(\Delta x_i^2).
\end{align*}
From \eqref{3_26} and the above estimates, we obtain
\begin{align*}
	||\sigma(t)||=\mathcal{O}(\Delta x).
\end{align*}
\subsubsection{Type C: Locally-uniform grids}

\begin{figure}[htpb!]
	\centering
	\includegraphics[width=\textwidth]{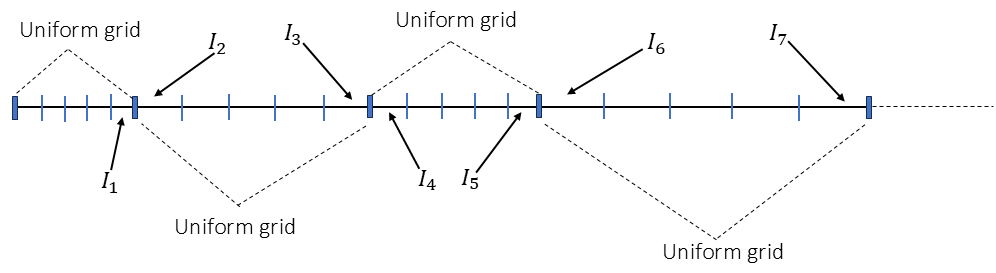}
	\caption{Locally-uniform smooth grids }\label{fig_9}
	\label{D56}
\end{figure}

Over locally-uniform grids, shown in Figure \ref{fig_9},	from \eqref{3_26} we obtain
\begin{align}
	\sigma_i(t)=
	\begin{cases}
		\mathcal{O}(\Delta x_i^2),&\quad\text{for}\quad i=1,I_1,I_2,\dots,I_m,I,\\
		\mathcal{O}(\Delta x_i^3) &\quad\mbox{elsewhere}.
	\end{cases}
\end{align}
Therefore, the order of consistency is given by
\begin{align}
	||\sigma(t)||=
	\mathcal{O}(\Delta x).
\end{align}
\subsubsection{Type D: Random  grids}
Over random grids,  no leading expressions are  going to vanish in \eqref{3_26}. From \textbf{Case I}, we consider first leading term by using the relation $\ds \frac{\Delta x}{\Delta x_{\text{min}}}\leq \alpha$
\begin{align*}
	\frac{\Delta x^2_{i}}{\Delta x_i+\Delta x_{i+1}}C_i
	+\mathcal{O}(\Delta x_i^2)
	\leq \frac{\Delta x^2}{\Delta x_{\text{min}}}+\mathcal{O}(\Delta x_i^2)\leq \alpha \Delta x+\mathcal{O}(\Delta x_i^2).
\end{align*}
Similar argument is applicable for  \textbf{Case II}. 
Therefore, from \eqref{3_26} we can  easily say that the order of consistency is 
\begin{align}
	||\sigma(t)||=\mathcal{O}(\Delta x).
\end{align}
\subsubsection{Type E: Oscillatory grids}
In order to track the oscillatory behavior (unlikely event) in discretized points due to the effect of collisions, let us consider oscillatory grids defined as
\begin{align*}
	\Delta x_{i+1}:=
	\begin{cases}
		\frac{1}{2}\Delta x_i,&\quad\text{if}\quad i \quad\text{is even}\\
		2\Delta x_i &\quad\mbox{if}\quad i\quad\text{is odd}.
	\end{cases}
\end{align*}
By performing this above defined oscillatory grids in \eqref{3_26}, we obtain
\begin{align*}
	||\sigma(t)||=\mathcal{O}(\Delta x).
\end{align*}
Therefore, we can say that VAM is first order consistent on oscillatory grids.
\begin{Remark}\label{rem_3_3}
	It is observed that over uniform, nonuniform and locally-uniform grids, the  VAM executes first order consistency same as the FPT \cite{MR4566091}. Over random grids, VAM exhibits first-order consistency, whereas the FPT becomes inconsistent over random grids  \cite{MR4566091}. This employs that VAM offers better accuracy in terms of consistency compared to  FPT for solving the collisional nonlinear breakage equation over random grids. Furthermore, it is found that VAM achieves first-order consistency by applying to oscillatory grids.
\end{Remark}
\begin{Remark}\label{rem_3_4}
	
	Compiling the Remark \ref{rem_3_1} and \ref{rem_3_2}, we compute the spacial  truncation error over  all four different regions $R_1, R_2, R_3, R_4$. The order of consistency is obtained as
	
	\begin{align}
		\qquad||\sigma(t)||=&\sum_{i\in{R}_1}|\sigma_i(t)|\sum_{i\in{R}_2}|\sigma_i(t)|+\sum_{i\in{R}_3}|\sigma_i(t)|+\sum_{i\in{R}_4}|\sigma_i(t)|\notag\\
		=&\sum_{i\in{R}_1}
		\left|C'_i\left(\frac{\Delta x^3_{i}}{\Delta x_i+\Delta x_{i+1}}-\frac{\Delta x^3_{i-1}}{\Delta x_i+\Delta x_{i-1}}\right)\right|
		+\sum_{i\in{R}_2}
		\left|C'_i\bigg	(\frac{\Delta x^3_{i+1}}{\Delta x_i+\Delta x_{i+1}}-\frac{\Delta x^3_{i}}{\Delta x_i+\Delta x_{i-1}}\bigg)\right|
		\notag\\&
		+|\sigma_1(t)|+\left(|\sigma_{I_1}(t)|+|\sigma_{I_2}(t)|+\dots+|\sigma_{I_m}(t)|\right)+|\sigma_I(t)|+\mathcal{O}(\Delta x^2)\notag\\
		=&\sum_{i\in{R}_1}\left|C'_i\left(\frac{\Delta x^3_{i}}{\Delta x_i+\Delta x_{i+1}}-\frac{\Delta x^3_{i-1}}{\Delta x_i+\Delta x_{i-1}}\right)\right|+
		\sum_{i\in{R}_2}\left|C'_i
		\bigg	(\frac{\Delta x^3_{i+1}}{\Delta x_i+\Delta x_{i+1}}-\frac{\Delta x^3_{i}}{\Delta x_i+\Delta x_{i-1}}\bigg)\right|
		\notag\\&+
		\mathcal{O}(\Delta x^2)\label{3_31}.
	\end{align}
	Thus VAM shows second order consistency that is  $||\sigma(t)||=\mathcal{O}(\Delta x^2)$ over  uniform, nonuniform and locally-uniform grids, whereas $||\sigma(t)||=\mathcal{O}(\Delta x)$ over random and oscillatory grids.
	
\end{Remark}
\subsection{Convergence}
To prove the  consistency and convergence of the new scheme VAM, the following theorem is utilized.
\begin{Theorem}\label{theo_2_2}[Convergence theorem]
	Assume that the mapping $\boldsymbol{\hat{J}}$  satisfies  Lipschitz condition for $0\le t\le T$ and for all $\boldsymbol{N}, \boldsymbol{\hat{N}}\in\mathbb{R}^I$ where $\boldsymbol{N}$ and $\boldsymbol{\hat{N}}$ are projected exact and numerical solutions defined in \eqref{2_4} and \eqref{2_14} respectively. Then a consistent discretization method is also convergent and the order of the convergence is the same order as the consistency.
\end{Theorem}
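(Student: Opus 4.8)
The plan is to run the classical stability-plus-consistency (Lax-type) argument: the Lipschitz property of $\boldsymbol{\hat{J}}$ established in Proposition \ref{pros_3_1} plays the role of stability, and a Gronwall estimate transfers the order of the truncation error verbatim onto the global error. First I would set up the error-evolution identity. By the definition of the truncation error \eqref{3_2}, substituting the projected exact solution into the right-hand side of the scheme produces exactly the residue $\sigma$, so that $\boldsymbol{N}$ satisfies $\frac{\dd \boldsymbol{N}}{\dd t}=\boldsymbol{\hat{J}}(\boldsymbol{N})+\sigma(t)$, whereas by \eqref{2_24} the numerical solution satisfies $\frac{\dd \boldsymbol{\hat{N}}}{\dd t}=\boldsymbol{\hat{J}}(\boldsymbol{\hat{N}})$. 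Writing $\boldsymbol{e}(t):=\boldsymbol{N}(t)-\boldsymbol{\hat{N}}(t)$ for the global discretization error and subtracting the two identities gives
\[
\frac{\dd \boldsymbol{e}}{\dd t}=\boldsymbol{\hat{J}}(\boldsymbol{N})-\boldsymbol{\hat{J}}(\boldsymbol{\hat{N}})+\sigma(t).
\]

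Next I would pass to integral form. Assuming the initial projections coincide, $\boldsymbol{e}(0)=0$, integrating from $0$ to $t$, taking the discrete $L^1$-norm \eqref{3_1}, and invoking the triangle inequality together with the Lipschitz bound (with constant $\eta$) of Proposition \ref{pros_3_1} yields
\[
\|\boldsymbol{e}(t)\|\le \eta\int_0^t\|\boldsymbol{e}(s)\|\,\dd s+\int_0^t\|\sigma(s)\|\,\dd s\le \eta\int_0^t\|\boldsymbol{e}(s)\|\,\dd s+T\max_{0\le s\le T}\|\sigma(s)\|.
\]
Applying Gronwall's inequality then removes the implicit dependence of the right-hand side on $\boldsymbol{e}$ and produces
\[
\|\boldsymbol{e}(t)\|\le T\,e^{\eta T}\max_{0\le s\le T}\|\sigma(s)\|,\qquad 0\le t\le T.
\]
If the scheme is $p$th-order consistent, i.e. $\|\sigma(t)\|=\mathcal{O}(\Delta x^p)$ uniformly in $t$ by \eqref{3_4}, then $T\,e^{\eta T}$ is a grid-independent constant, so $\|\boldsymbol{N}(t)-\boldsymbol{\hat{N}}(t)\|=\mathcal{O}(\Delta x^p)$, which is precisely $p$th-order convergence in the sense of \eqref{3_3}; thus the convergence order equals the consistency order.

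The hard part will be justifying that the single Lipschitz constant $\eta$ from Proposition \ref{pros_3_1} is valid along the entire pair of trajectories on $[0,T]$. Because $\boldsymbol{\hat{J}}$ is quadratic in its argument, the estimates \eqref{3_9}--\eqref{3_10} that furnish $\eta$ depend on the uniform zeroth-moment bound $\beta(T)$ of \eqref{2_19}; one must therefore confirm that both $\boldsymbol{\hat{N}}$ and the projected exact solution $\boldsymbol{N}$ remain inside the ball on which this bound holds for all $t\in[0,T]$. The numerical trajectory is controlled by \eqref{2_18}--\eqref{2_19}, and the continuous trajectory by the analogous a priori bound on the exact zeroth moment. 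Once this uniform-in-time boundedness is secured, the Lipschitz constant is genuinely independent of the grid and of $t$, and the Gronwall step closes the argument; the remaining computations are routine.
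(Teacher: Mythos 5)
Your proposal is correct and follows the same overall strategy as the paper (Lipschitz stability of $\boldsymbol{\hat{J}}$ plus an exponential-in-time bound on the error), but it differs in one substantive and important respect. The paper's error equation \eqref{3_32} reads $\frac{\dd \boldsymbol{E}}{\dd t}=\boldsymbol{\hat{J}}(\boldsymbol{N})-\boldsymbol{\hat{J}}(\boldsymbol{\hat{N}})$ with no source term, which leads only to $\|\boldsymbol{E}(t)\|\le\|\boldsymbol{E}(0)\|e^{\gamma t}$ and hence, with $\boldsymbol{E}(0)=0$, to $\boldsymbol{E}(t)\equiv 0$; taken literally this would make the scheme exact, and the claimed equality of the convergence and consistency orders is then asserted rather than derived. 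You instead retain the truncation error as a forcing term, $\frac{\dd \boldsymbol{e}}{\dd t}=\boldsymbol{\hat{J}}(\boldsymbol{N})-\boldsymbol{\hat{J}}(\boldsymbol{\hat{N}})+\sigma(t)$, and close with Gr\"onwall to obtain the quantitative bound $\|\boldsymbol{e}(t)\|\le T e^{\eta T}\max_{0\le s\le T}\|\sigma(s)\|$, from which the transfer of the consistency order to the convergence order genuinely follows. This is the complete form of the Lax-type argument the paper intends, and it is consistent with the paper's own definition of $\sigma$ via the defect $[B_i-\hat{\mathcal{B}}_i]-[D_i-\hat{\mathcal{D}}_i]$. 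Your closing caveat is also well taken: the Lipschitz constant $\eta=2(\bar{\zeta}+1)C(x_{\max})\beta(T)$ of Proposition \ref{pros_3_1} is really a local constant valid on the ball $\{\|\cdot\|\le\beta(T)\}$, so one must check that both the numerical trajectory (via \eqref{2_19}) and the projected exact trajectory (via the analogous bound on the continuous zeroth moment) stay in that ball on $[0,T]$ — a point the paper does not address explicitly. In short, your route buys a proof in which the stated conclusion actually follows from the displayed estimates.
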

\begin{proof}
	Let $\boldsymbol{E}(t)$ be the global discretization error and is defined by $\boldsymbol{E}(t)=\boldsymbol{N}(t)-\boldsymbol{\hat{N}}(t)$ for all $\boldsymbol{N},\boldsymbol{\hat{N}}\in\mathbb{R}^I$. Now, we can write
	\begin{align}\label{3_32}
		\frac{\dd \boldsymbol{E}(t)}{\dd t}=\frac{\dd \boldsymbol{N}(t)}{\dd t}-\frac{\dd \boldsymbol{\hat{N}}(t)}{\dd t}=\boldsymbol{\hat{J}(\boldsymbol{N}(t))}-\boldsymbol{\hat{J}(\boldsymbol{\hat{N}}(t))}.
	\end{align}
	From Proposition \ref{pros_3_1},  $\boldsymbol{\hat{J}}$  satisfies  Lipschitz condition, so there exist Lipschitz constant $\gamma<\infty$ such that
	\begin{align}\label{3_33}
		||\boldsymbol{\hat{J}}(\boldsymbol{N})-\boldsymbol{\hat{J}}(\boldsymbol{\hat{N}})||\le \gamma||\boldsymbol{N}-\boldsymbol{\hat{N}}||\hspace{0.2cm}\text{ for all}\hspace{0.2cm}\boldsymbol{N},\boldsymbol{\hat{N}}\in\mathbb{R}^I.
	\end{align}
	Using  Lipschitz condition \eqref{3_33} in \eqref{3_32}, we obtain
	\begin{align}
		\frac{\dd|| \boldsymbol{E}(t)||}{\dd t}\leq||\boldsymbol{\hat{J}(\boldsymbol{N}(t))}-\boldsymbol{\hat{J}(\boldsymbol{\hat{N}}(t))}||\leq\gamma||\boldsymbol{N}-\boldsymbol{\hat{N}}||=\gamma|| \boldsymbol{E}(t)||.
	\end{align}
	The above inequality implies that
	\begin{align}
		||\boldsymbol{E}(t)||\leq||\boldsymbol{E}(0)||\exp{(\gamma t)}.
	\end{align}
	When $\Delta x\to 0$,  with the relation $||\boldsymbol{E}(0)||=0$ and by the definition of global dicretization error, we deduce that $||\boldsymbol{E}(t)||=0$ that is  if the  new discretization method is consistent, then it is convergent  and the order of the convergence is same as the order of consistency.
\end{proof}
Thus, from the convergence theorem \ref{theo_2_2}, we can say that the numerical VAM \eqref{2_14} is convergent and the order of convergence of the VAM is same as the order of consistency.

\section{Two-dimensional collisional breakage equation}\label{sec_4}
In particulate process, a particle is characterized by different properties such as volume,  energy, moisture content etc. These properties need to be defined by more than one-dimension. Here, we assume two particle properties $x_1$ and $x_2$. The corresponding  collisional  nonlinear breakage equation in two particle properties (or dimensions)is written as 
\begin{align}\label{4_1}
	\frac{\partial n(x_1,x_2,t)}{\partial t}=&\int_{0}^{\infty}\int_{0}^{\infty}\int_{x_1}^{\infty} \int_{x_2}^{\infty} \beta(x_1,x_2|y_1,y_2;z_1,z_2) \K(y_1,y_2,z_1,z_2) n(y_1,y_2,t)n(z_1,z_2,t) \dd y_2 \dd y_1\dd z_2 \dd z_1\notag\\
	&- n(x_1,x_2,t)\int_{0}^{\infty} \int_{0}^{\infty}\K(x_1,x_2,y_1,y_2) n(y_1,y_2,t) \dd y_2 \dd y_1,
\end{align}
with the initial condition
\begin{equation}\label{4_2}
	n(x_1,x_2,0)=n_{0}(x_1,x_2)\geq0,\quad\text{for}\quad (x_1,x_2)\in\mathbb{R}^+\times\mathbb{R}^+.
\end{equation}
Here, all the functions present in equation \eqref{4_1} are defined similar to the one-dimensional collisional nonlinear  breakage equation.
The moments of the particles with properties $x_1$ and $x_2$ is given by
\begin{align}
	\mathcal{M}_{r_1,r_2}(t)=\int_{0}^{\infty}\int_{0}^{\infty}x_1^{r_1}x_2^{r_2}n(x_1,x_2,t)\dd x_2\dd x_1, \quad\text{for all}\quad r_1,r_2=0,1,2,\dots.
\end{align}
Here, the zeroth order moment $\mathcal{M}_{0,0}(t)$ represents the total number of particles, whereas the moments $\mathcal{M}_{1,0}(t)$ and $\mathcal{M}_{0,1}(t)$ denotes the total amount of property with respect to $x_1$ and $x_2$, respectively and the moment $\mathcal{M}_{1,1}(t)$ describes the hypervolume of the particles.\\
We truncate the model in the finite computational domain $[0,x_{\max}]^2$ with  $x_{\max}<\infty$ and the time is taken as  $[0,T]$. Next we discretize the computational domain  as cartesian grid into $I_1 I_2$ cells as $\Lambda_{i,j}:= [x_{1,i-1/2},x_{1,i+1/2}]\times[x_{2,j-1/2},x_{2,j+1/2}]$ with $1\le i\le I_1$ and $1\le j\le I_2$. Moreover,
$$x_{1,1/2}:=0, \quad x_{2,1/2}:=0,\quad  x_{1,I_1+1/2}:=R_1 \quad\text{and}\quad x_{2,I_2+1/2}:=R_2.$$
\noindent The representative of the particle population in the cell $(i,j)$ is denoted  by $(x_{1,i},x_{2,j})$ where
$$ x_{1,i}:=\frac{x_{1,i-1/2}+x_{1,i+1/2}}{2}\quad\text{and}\quad x_{2,j}:=\frac{x_{2,j-1/2}+x_{2,j+1/2}}{2},$$
and also
$$\Delta x_{1,i}:=x_{1,i+1/2}-x_{1,i-1/2}\quad\text{and}\quad\Delta x_{2,j}:=x_{2,j+1/2}-x_{2,j-1/2}.$$
Let $N_{i,j}(t)$ be the discrete  number density in $\Lambda_{i,j}$ at time $t$ is defined by 
\begin{align}\label{4_4}
	N_{i,j}(t)=\int_{\Lambda_{i,j}} n(x_1,x_2,t)\dd x_2\dd x_1.
\end{align} 
Integrating equation truncated equation \eqref{4_1} with respect to the volume variables $x_{1,i}$ and $x_{2,j}$ over each cell $\Lambda_{i,j}$, we obtain
\begin{equation}\label{4_5}
	\frac{\dd N_{i,j}(t)}{\dd t} =  B_{i,j}(t) - D_{i,j}(t).
\end{equation}
Here, $B_{i,j}(t)$ is the birth rate  and $D_{i,j}(t)$ is the death rate of particles and are given by
\begin{subequations}\label{4_6}
	\begin{align}
		B_{i,j}(t):=&\int_{\Lambda_{i,j}}\int_{0}^{R_1}\int_{0}^{R_2}\int_{x_1}^{R_1} \int_{x_2}^{R_2} \beta(x_1,x_2|y_1,y_2;z_1,z_2) \K(y_1,y_2,z_1,z_2) \notag\\&\hspace{3.8cm}\times n(y_1,y_2,t)n(z_1,z_2,t) \dd y_2 \dd y_1\dd z_2 \dd z_1\dd x_2\dd x_1,
	\end{align}
	and
	\begin{align}
		D_{i,j}(t):=&\int_{\Lambda_{i,j}}\int_{0}^{R_1}\int_{0}^{R_2} \K(x_1,x_2,y_1,y_2)n(x_1,x_2,t)n(y_1,y_2,t) \dd y_2 \dd y_1\dd x_2\dd x_1.
	\end{align}
\end{subequations}
Putting $\ds n(x_1,x_2,t)\approx \sum_{i=1}^{I_{1}}\sum_{i=1}^{I_{2}}N_{i,j}(t) \delta(x-x_{1,i})\delta(x-x_{2,j})$ in the above equations \eqref{4_5}-\eqref{4_6}, we get the discrete formulation of  two-dimensional collisional nonlinear breakage equation as
\begin{align}\label{4_7}
	\frac{\dd {N}_{i,j}(t)}{\dd t} = \hat{ B}_{i,j}(t) - \hat{D}_{i,j}(t),
\end{align}
where $\hat{N}_i(t)$ discrete approximation of the density function at time $t\geq0$.
Here, birth terms and death terms  are defined by
\begin{subequations}\label{4_8}
	\begin{align}
		\hat{B}_{i,j}(t):=&\sum_{p_1=1}^{I_{1}} \sum_{p_2=1}^{I_{2}}\sum_{q_1=i}^{I_{1}}\sum_{q_2=j}^{I_{2}} \beta_{i,j}^{q_1,q_2} \K(x_{1,q_1},x_{2,q_2},x_{1,p_1},x_{2,p_2}) {N}_{q_1,q_2}(t) {N}_{p_1,p_2}(t),\\
		\hat{D}_{i,j}(t):=& \sum_{p_1=1}^{I_{1}}\sum_{p_2=1}^{I_{2}} \K(x_{1,i},x_{2,j},x_{1,p_1},x_{2,p_2}){ N}_{i,j}(t)  {N}_{p_1,p_2}(t),
	\end{align}
\end{subequations}
where \; $\ds \beta_{i,j}^{q_1,q_2}= \int_{x_{1,i-1/2}}^{p_{1,i}^{q_1}}\int_{x_{2,j-1/2}}^{p_{2,j}^{q_2}}  \beta({x}_1,{x}_2|x_{1,q_1},x_{2,q_2};x_{1,p_1},x_{2,p_2})\dd {x}_2\dd {x}_1$\; and for all $k=1,2$ with $ l=i,j$,\;
$\ds p_{k,l}^{q_k}=
\begin{cases} 
	x_{k,l}, & \quad \mbox{if}\quad q_k=l,\\
	x_{k,l+1/2}, & \quad \mbox{otherwise}.
\end{cases}$\\
The total mass flux of the property for $x_1$ and $x_2$, respectively over $\Lambda_{i,j}$ are given by
\begin{align}\label{4_9}
	V_{{x_1,i}}(t):=&\int_{\Lambda_{i,j}}\int_{0}^{R_1}\int_{0}^{R_2}\int_{x_1}^{R_1} \int_{x_2}^{R_2}x_{1} \beta(x_1,x_2|y_1,y_2;z_1,z_2) \K(y_1,y_2,z_1,z_2)\notag\\&\hspace{3.8cm}\times n(y_1,y_2,t)n(z_1,z_2,t) \dd y_2 \dd y_1\dd z_2 \dd z_1\dd x_2\dd x_1,
\end{align}
and
\begin{align}
	V_{x_2,i}(t):=&\int_{\Lambda_{i,j}}\int_{0}^{R_1}\int_{0}^{R_2}\int_{x_1}^{R_1} \int_{x_2}^{R_2}x_{2} \beta(x_1,x_2|y_1,y_2;z_1,z_2) \K(y_1,y_2,z_1,z_2)\notag\\&\hspace{3.8cm}\times n(y_1,y_2,t)n(z_1,z_2,t) \dd y_2 \dd y_1\dd z_2 \dd z_1\dd x_2\dd x_1.
\end{align}
The	discrete  mass flux  of property for  $x_1$ and $x_2$, respectively  over $\Lambda_{i,j}$  are obtained as 
\begin{align}\label{4_11}
	\hat{V}_{x_1,i}(t):=&\sum_{p_1=1}^{I_{1}} \sum_{p_2=1}^{I_{2}}\sum_{q_1=i}^{I_{1}}\sum_{q_2=j}^{I_{2}}  \K(x_{1,q_1},x_{2,q_2},x_{1,p_1},x_{2,p_2}) {N}_{q_1,q_2}(t) {N}_{p_1,p_2}(t)\notag\\&\hspace{3cm}\times\int_{x_{1,i-1/2}}^{p_{1,i}^{q_1}}\int_{x_{2,j-1/2}}^{p_{2,j}^{q_2}}x_1 \beta({x}_1,{x}_2|x_{1,q_1},x_{2,q_2};x_{1,p_1},x_{2,p_2})\dd {x}_2\dd {x}_1,
\end{align}
and
\begin{align}\label{4_12}
	\hat{V}_{x_2,j}(t):=&\sum_{p_1=1}^{I_{1}} \sum_{p_2=1}^{I_{2}}\sum_{q_1=i}^{I_{1}}\sum_{q_2=j}^{I_{2}}  \K(x_{1,q_1},x_{2,q_2},x_{1,p_1},x_{2,p_2}){N}_{q_1,q_2}(t) {N}_{p_1,p_2}(t)\notag\\&\hspace{3cm}\times\int_{x_{1,i-1/2}}^{p_{1,i}^{q_1}}\int_{x_{2,j-1/2}}^{p_{2,j}^{q_2}}x_2 \beta({x}_1,{x}_2|x_{1,q_1},x_{2,q_2};x_{1,p_1},x_{2,p_2})\dd {x}_2\dd {x}_1.
\end{align}	
The average property values of all new born particles for property with respect to  $x_1$ and $x_2$, respectively in $\Lambda_{i,j}$ are given by
\begin{align}
	\bar{v}_{{1,i}}:=\frac{\hat{V}_{x_1,i}}{\hat{B}_{i,j}}\quad\text{and}\quad
	\bar{v}_{{2,j}}:=\frac{\hat{V}_{x_2,j}}{\hat{B}_{i,j}}.
\end{align}	
Similar argument is used as one-dimensional case to formulate the numerical scheme VAM.\\
Here, $\boldsymbol{\hat{N}}=\left\{\hat{N}_{i,j}\right\}\in\mathbb{R}^{I_1}\times\mathbb{R}^{I_2}$ be nonnegative vector for $i=1,2,\dots,I_1$ and $j=1,2,\dots,I_2$.
\subsection{Cell volume average method in two-dimension}
The numerical scheme for two-dimensional collisional nonlinear  breakage equation by VAM is defined by
\begin{align}
	\frac{\dd \hat{N}_{i,j}(t)}{\dd t}=&\sum_{p=0}^{1}\sum_{q=0}^{1}\lambda_{i,j}^{-,-}(\bar{v}_{{1,i-p}},\bar{v}_{{2,j-q}})H[(-1)^p({x}_{1,i-p}-\bar{v}_{1,i-p})]H[(-1)^q({x}_{2,j-q}-\bar{v}_{2,j-q})] \hat{B}_{i-p,j-q}(t)\notag\\&+\sum_{p=0}^{1}\sum_{q=0}^{1}\lambda_{i,j}^{-,+}(\bar{v}_{{1,i-p}},\bar{v}_{{2,j+q}})H[(-1)^p({x}_{1,i-p}-\bar{v}_{1,i-p})]H[(-1)^q({x}_{2,j+q}-\bar{v}_{2,j+q})] \hat{B}_{i-p,j+q}(t)\notag\\&+
	\sum_{p=0}^{1}\sum_{q=0}^{1}\lambda_{i,j}^{+,-}(\bar{v}_{{1,i+p}},\bar{v}_{{2,j-q}})H[(-1)^p({x}_{1,i+p}-\bar{v}_{1,i+p})]H[(-1)^q({x}_{2,j-q}-\bar{v}_{2,j-q})] \hat{B}_{i+p,j-q}(t)\notag\\&+\sum_{p=0}^{1}\sum_{q=0}^{1}\lambda_{i,j}^{+,+}(\bar{v}_{{1,i+p}},\bar{v}_{{2,j+q}})H[(-1)^p({x}_{1,i+p}-\bar{v}_{1,i+p})]H[(-1)^q({x}_{2,j+q}-\bar{v}_{2,j+q})] \hat{B}_{i+p,j+q}(t)\notag\\&-\sum_{p=1}^{I_{x_1}}\sum_{q=1}^{I_{x_2}} \K(x_{1,i},x_{1,p},x_{2,j},x_{2,q})\hat{ N}_{i,j}  \hat{N}_{p,q},
\end{align}
where $\hat{B}_{i,j}(t)$ defined by \eqref{4_8} and 	$\ds \lambda_{i,j}^{\pm,\pm}(\bar{v}_{1,i},\bar{v}_{2,j}):=\frac{\ds (\bar{v}_{1,i}-x_{1,i\pm 1})(\bar{v}_{2,j}-x_{2,j\pm 1})}{\ds (x_{1,i}-x_{1,i \pm 1})(x_{2,j}-x_{2,j\pm 1})}$
and $H$ is the Heaviside function defined as in previous one-dimensional case.

\section{Numerical discussion}	\label{sec_5}
This section is devoted to examine the accuracy of  VAM against existing fixed pivot technique and exact results for particular initial conditions to solve the collisional nonlinear breakage equation. 
This section is divided in two sub parts based on dimensions of the collisional  nonlinear breakage equation. For one dimensional problem, two  and for multi-dimensional  PBEs, two  test cases are analyzed. To access the accuracy and versatility of VAM, the results are compared with the exact values and FPT based approximations. For the numerical validation both thematic and data analysis is carried out.

\subsection{One-dimensional case}
This part deals with the one-dimensional numerical results  with two test examples together with a detailed error estimations between the new scheme VAM and FPT. Additionally, we  show  comparison with exact solutions. 
The significant key finding is that the VAM performs much better than existing fixed pivot techniques \cite{MR4566091} over  random grids. 
The relative errors of the moments are calculated using the following relation
\begin{align}\label{5_1}
	E_{\mathcal{M}_r}(t) = \left|\frac{\mathcal{M}_r(t)-\hat{\mathcal{M}_r}(t)}{\mathcal{M}_r(t)}  \right|.
\end{align}
Here, $\mathcal{M}_r(t)$ and $\hat{\mathcal{M}_r}(t)$ are the exact and  numerical value of the $r^{th}$ moment, respectively. 

During numerical computations, dimensionless values are considered of all the concerned quantities which are obtained by dividing the quantities with  corresponding initial values. The computational domain for particle size is taken as $D := [10^{-9}, 1]$ and it is discretized into $30$ nonuniform sub-intervals using the geometric recurrence relation $x_{i+1/2} = rx_{i-1/2}$, where $r=1.4$. All simulations are performed in a HP Z6 G4 workstation and using MATLAB R2023b software.

\begin{example}\label{ex1}
	Consider  collisional nonlinear breakage equation with monodispresed  initial condition $\ds n(x,0)= \begin{cases}
		1,&\mbox{x=1}\\
		0, & \mbox{otherwise}
	\end{cases}$ with  collisional kernel $ \ds \K(x,y)=xy$ and breakage function as $\ds \beta(x|y;z)=\frac{2}{y}$. This breakage function splits the mother particle into two daughter particles.
\end{example}

Figure \ref{B_f1} is the particle number density function representation at time $t=1$ in log scale for three different type of grids. For nonuniform grid, FPT and VAM  both predict number density with high accuracy and are inline the exact one as shown in Figure \ref{B_f1_1}.  However, when Locally-uniform grids are considered, FPT over-predicted number density at few points and VAM generated accurate results (refer to Figure \ref{B_f1_4}). Moreover, from Figure \ref{B_f1_5}for random grid type FPT seems to fail in approximating the density function. On the other hand, VAM produced satisfactory results. To summarize, we can say that irrespective of grid type, VAM gives highly efficient solution.

\begin{figure}[H]
	\begin{subfigure}{.325\textwidth}
		\centering
		\includegraphics[width=1.00\textwidth]{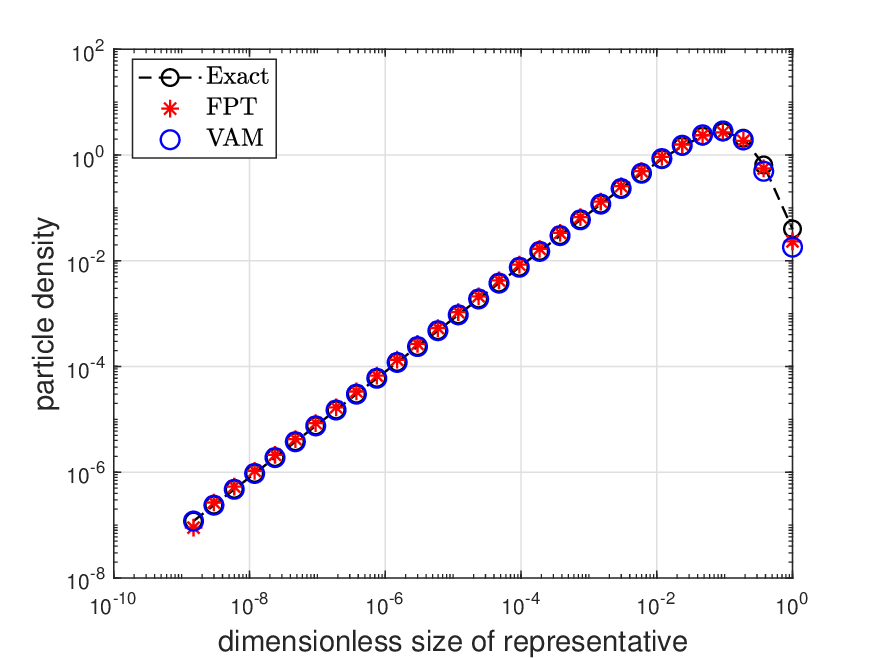}
		\caption{Nonuniform grids}
		\label{B_f1_1}
	\end{subfigure}
	\begin{subfigure}{.325\textwidth}
		\centering
		\includegraphics[width=1.00\textwidth]{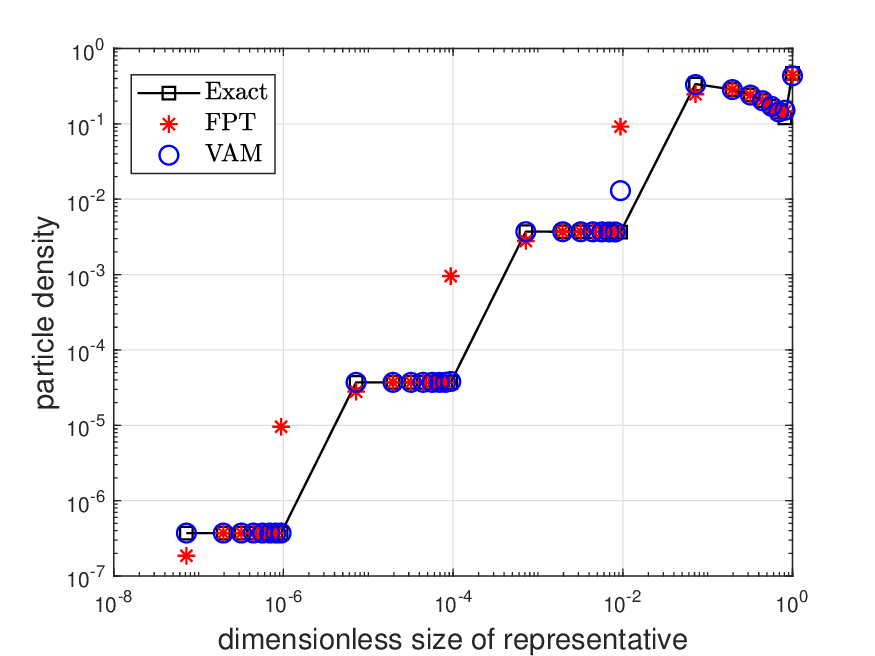}
		\caption{ Locally-uniform grids}
		\label{B_f1_4}
	\end{subfigure}
	\begin{subfigure}{.325\textwidth}
		\centering
		\includegraphics[width=1.00\textwidth]{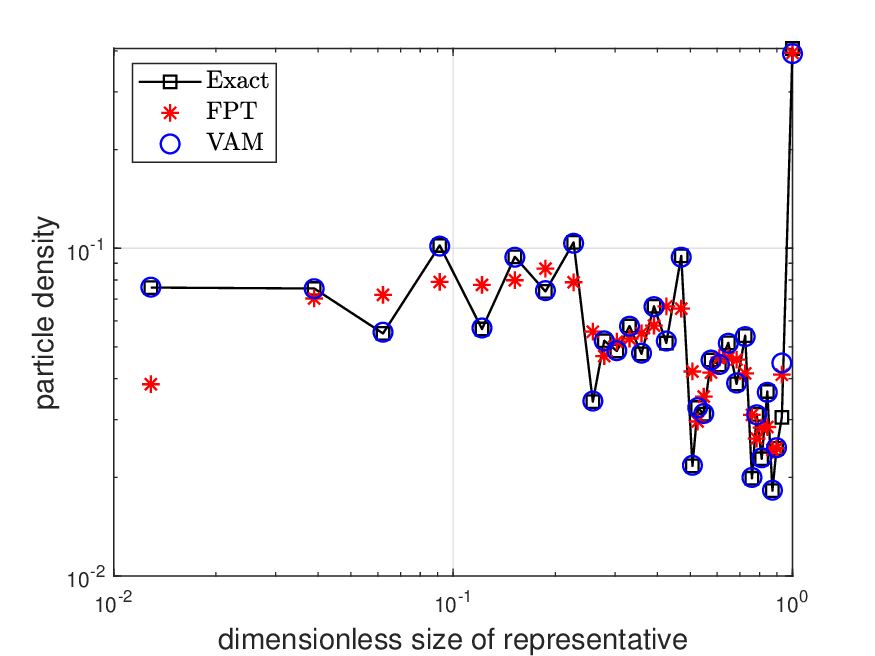}
		\caption{Random grids}
		\label{B_f1_5}
	\end{subfigure}
	\caption{Comparison of number density for different grid types at $t=1$ for test case \ref{ex1}.  }
	\label{B_f1}
\end{figure}

Different order moments are calculated over nonuniform grids. Since, over locally-uniform and random grids, FPT fails to predict number density, therefore they are not considered for moments calculation. The zeroth and first order moments are depicted in Figure \ref{B_f1_2} and both are inline with the exact and FPT based moments. Table \ref{t_1_2}  supports the insignificant error claim. Second and third moment is plotted in Figure \ref{B_f1_3} and \ref{B_f1_31}. Moment error data is presented in Table \ref{t_1_2}. As compared to FPT, the new scheme VAM produces more accurate results.
\begin{figure}[H]
	\centering
	\begin{subfigure}{.325\textwidth}
		\centering
		\includegraphics[width=1.00\textwidth]{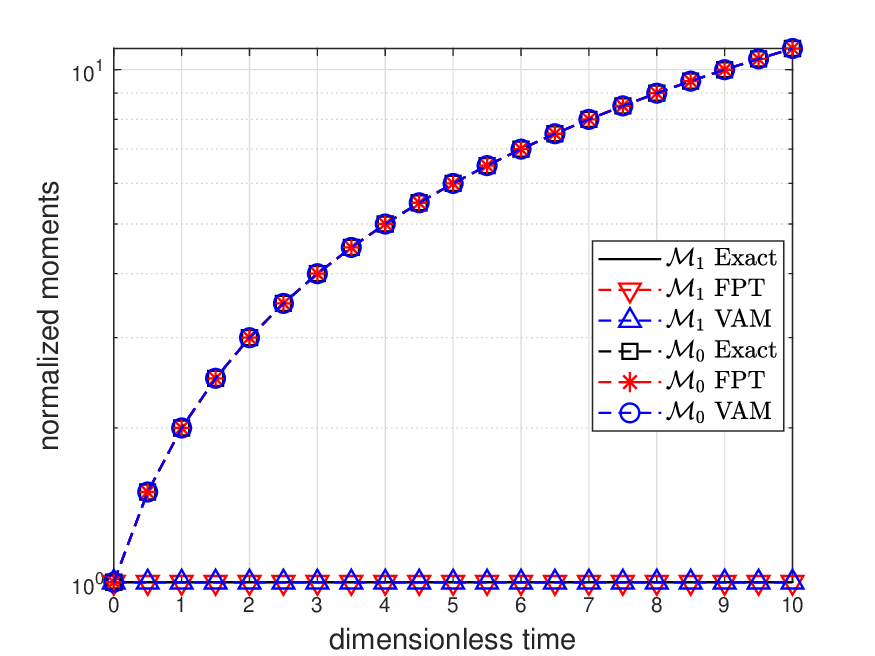}
		\caption{Zeroth and first moment}
		\label{B_f1_2}
	\end{subfigure}
	\begin{subfigure}{.325\textwidth}
		\centering
		\includegraphics[width=1.00\textwidth]{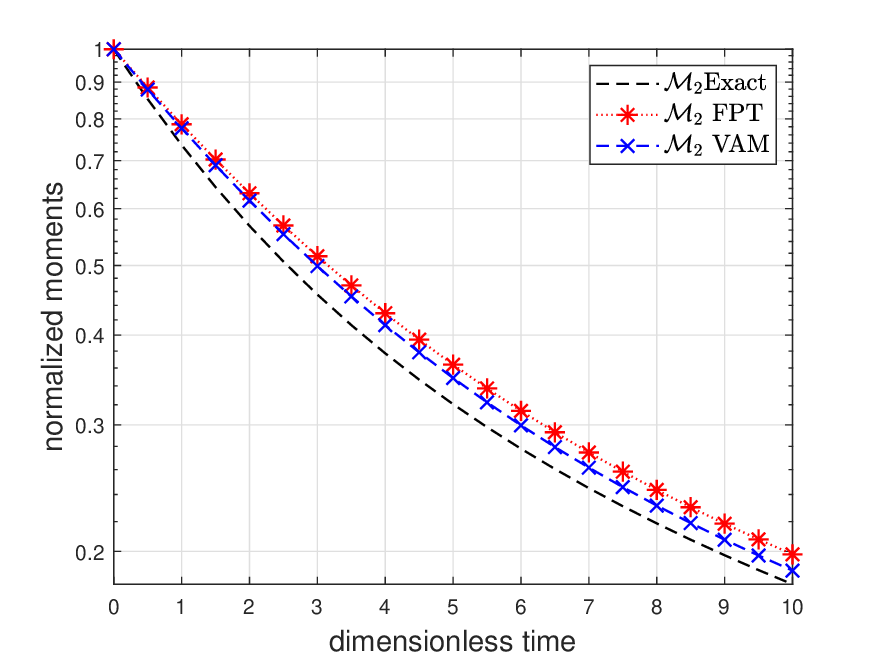}
		\caption{Second moment}
		\label{B_f1_3}
	\end{subfigure}
	\begin{subfigure}{.325\textwidth}
		\centering
		\includegraphics[width=1.00\textwidth]{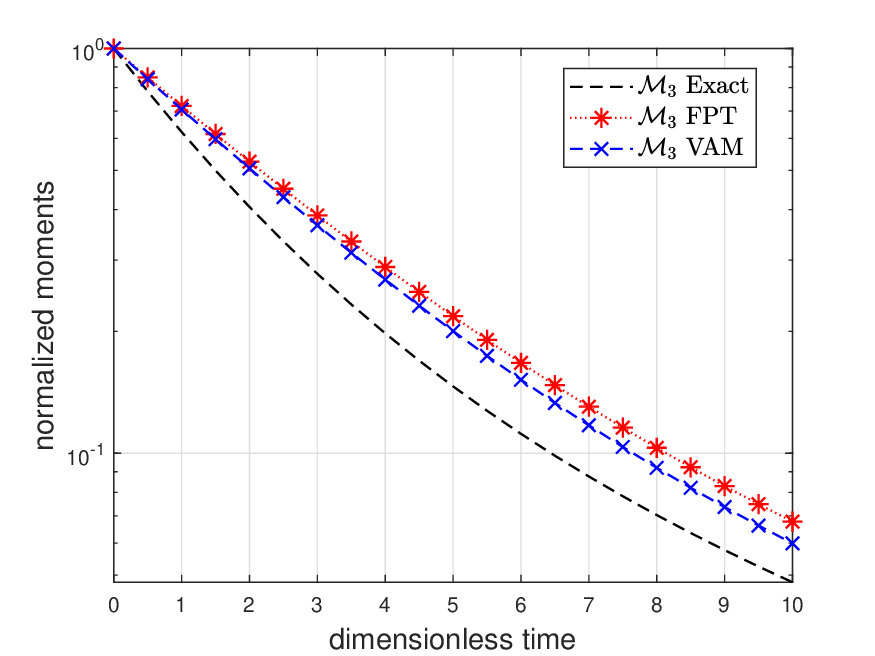}
		\caption{Third moment}
		\label{B_f1_31}
	\end{subfigure}
	\caption{Different order moments for test case \ref{ex1}. }
	\label{B1_f11}
\end{figure}

\begin{table}[htbp!]
	\centering
	\begin{tabular}{|c|  c |c |c|c|c|c| }     
		\hline
		Method & \multicolumn{3}{c| }{VAM} & \multicolumn{3}{c|}{FPT}\\ [0.5ex]
		\hline
		Time & $E_{\mathcal{M}_0}$ & $E_{\mathcal{M}_1}$ & $E_{\mathcal{M}_2}$ & $E_{\mathcal{M}_0}$& $E_{\mathcal{M}_1}$ & $E_{\mathcal{M}_2}$ \\ [0.5ex]
		\hline 
		2 & 1.43$\times 10^{-9}$ &  8.23$\times 10^{-9}$ & 8.46$\times 10^{-2}$ & 2.76$\times 10^{-9}$ & 8.23$\times 10^{-9}$  & 1.12$\times 10^{-1}$\\ [0.5ex]
		\hline 
		4 &  5.56$\times 10^{-9}$ & 8.29$\times 10^{-8}$  & 1.37$\times 10^{-1}$  & 3.17$\times 10^{-9}$ & 8.29$\times 10^{-8}$ & 9.46$\times 10^{-2}$\\ [0.5ex]
		\hline 
		6 &  5.91$\times 10^{-9}$ & 4.26$\times 10^{-8}$  & 1.28$\times 10^{-1}$ & 2.49$\times 10^{-9}$ & 4.26$\times 10^{-8}$ & 7.78$\times 10^{-2}$\\ [0.5ex]
		\hline 
		8 &  3.02$\times 10^{-9}$ & 7.62$\times 10^{-8}$ & 5.88$\times 10^{-2}$& 7.44$\times 10^{-9}$ & 7.62$\times 10^{-8}$  & 1.13$\times 10^{-1}$ \\ [0.5ex]
		\hline 
		10  & 3.53$\times 10^{-9}$ &  9.29$\times 10^{-8}$  & 4.49$\times 10^{-2}$ & 8.96$\times 10^{-9}$ & 9.29$\times 10^{-8}$  & 1.01$\times 10^{-1}$\\ [0.5ex]
		\hline 
		
	\end{tabular}
	\caption{Error analysis of different order moments for test case \ref{ex1}.}
	\label{t_1_2}
\end{table}

\begin{example}\label{ex2}
	Consider  collisional nonlinear breakage equation with monodispresed  initial condition  with  collisional kernel $ \ds \K(x,y)=1$ and breakage function as $\ds \beta(x|y;z)=\frac{4x^2}{y^3}$. 
\end{example}

Particle number density in log scale at time $t=1$ is presented in the Figure \ref{B_f2} for three type of grids. For small particle sizes, FPT under-predicts it whereas VAM gives accurate approximation for nonuniform grids. For locally-uniform and random grids, FPT fails predicts the density function as shown in Figure \ref{B_f2_4}  and \ref{B_f2_5}, respectively. However, VAM produces more accurate results for both grid types.

\begin{figure}[H]
	\begin{subfigure}{.325\textwidth}
		\centering
		\includegraphics[width=1.00\textwidth]{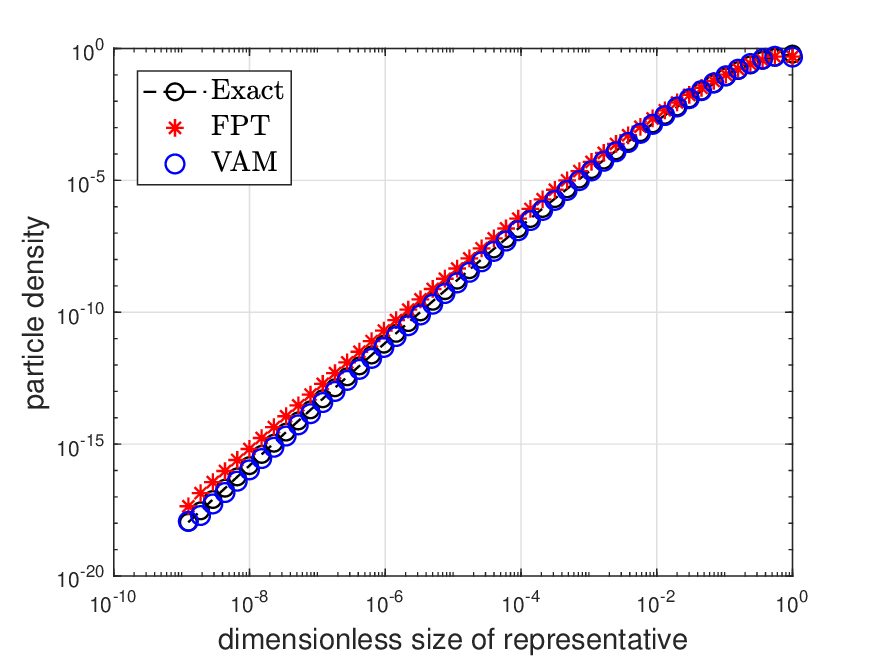}
		\caption{Nonuniform grids}
		\label{B_f2_1}
	\end{subfigure}
	\begin{subfigure}{.325\textwidth}
		\centering
		\includegraphics[width=1.00\textwidth]{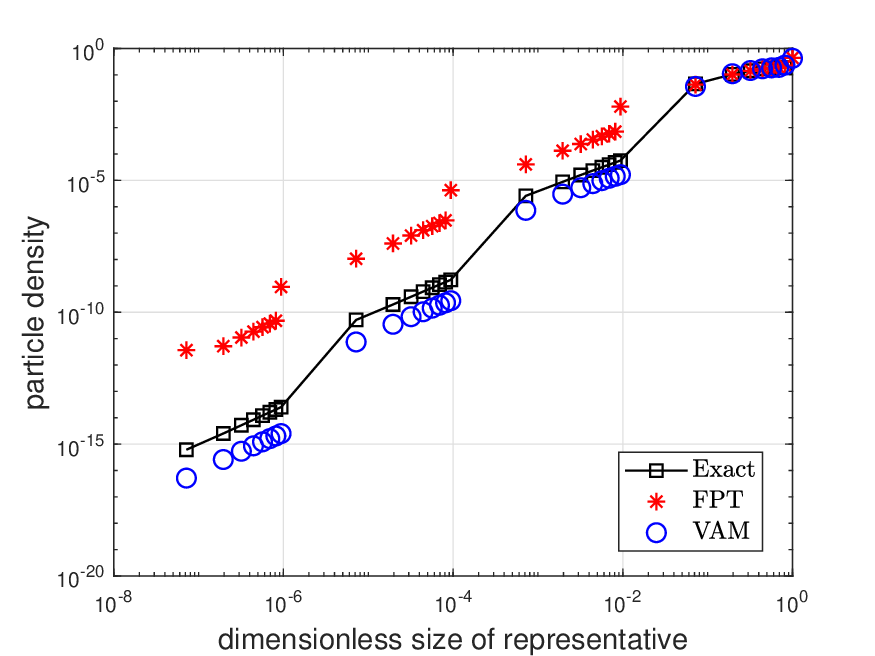}
		\caption{Locally-uniform grids}
		\label{B_f2_4}
	\end{subfigure}
	\begin{subfigure}{.325\textwidth}
		\centering
		\includegraphics[width=1.00\textwidth]{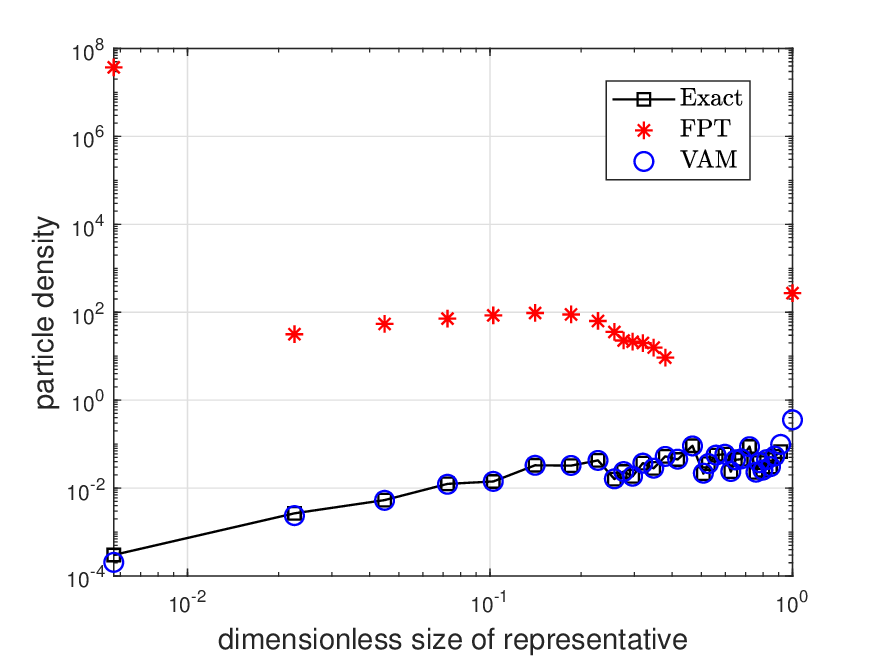}
		\caption{Random grids}
		\label{B_f2_5}
	\end{subfigure}
	\caption{Comparison of number density for different grid types at $t=1$ for test case \ref{ex2}. }
	\label{B_f2} 
\end{figure}
The first two moments are predicted with high accuracy by both the methods (refer to Figure \ref{B_f2_2} and Table \ref{t_2_2}) for nonuniform grids. For second order moment VAM generates less error and gives better prediction as compared to FPT. A similar observation can be made from the Table \ref{t_2_2}.

\begin{figure}[H]
	\centering
	\begin{subfigure}{.325\textwidth}
		\centering
		\includegraphics[width=1.00\textwidth]{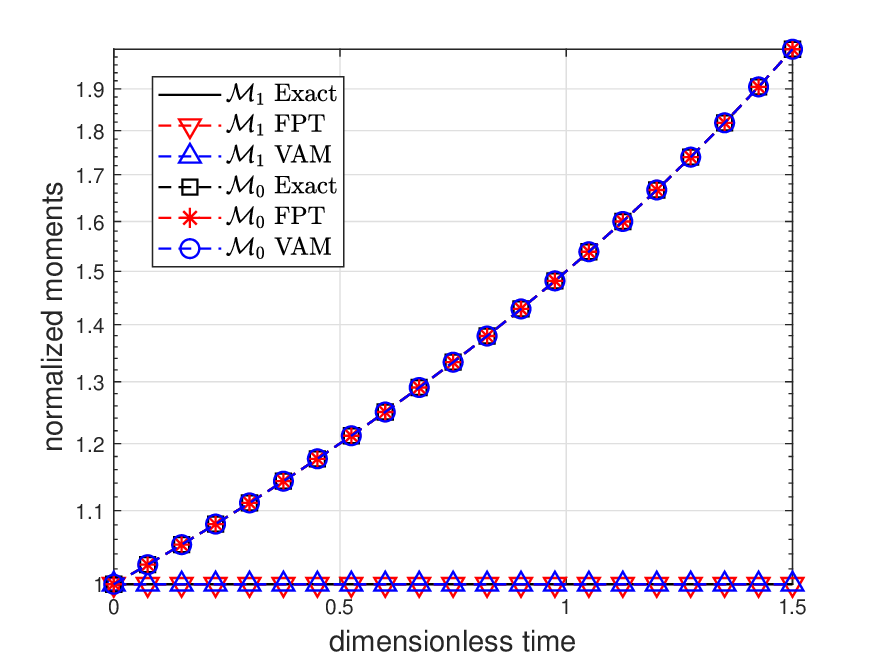}
		\caption{Zeroth and first moment}
		\label{B_f2_2}
	\end{subfigure}
	\begin{subfigure}{.325\textwidth}
		\centering
		\includegraphics[width=1.00\textwidth]{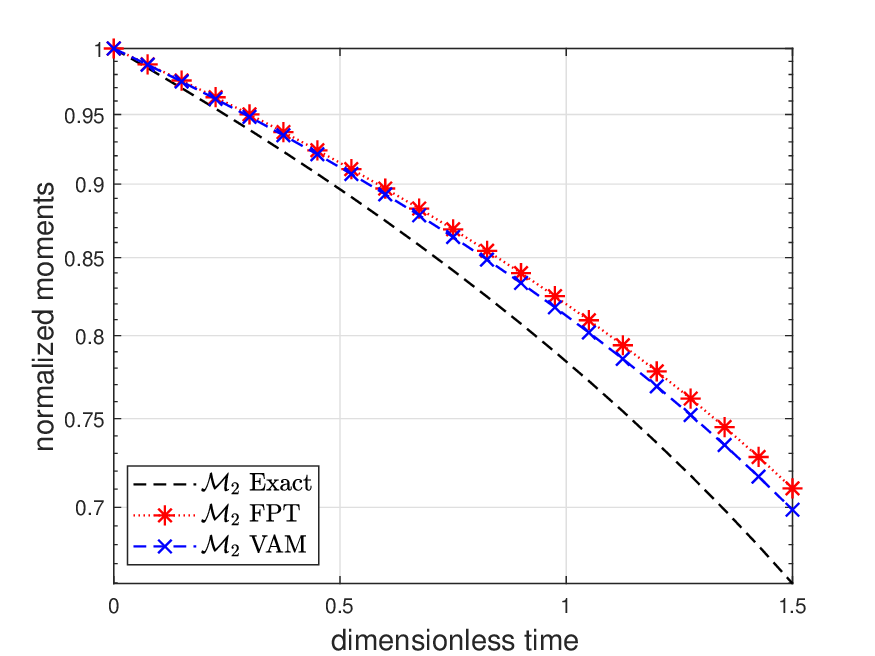}
		\caption{Second moment}
		\label{B_f2_3}
	\end{subfigure}
	\begin{subfigure}{.325\textwidth}
		\centering
		\includegraphics[width=1.00\textwidth]{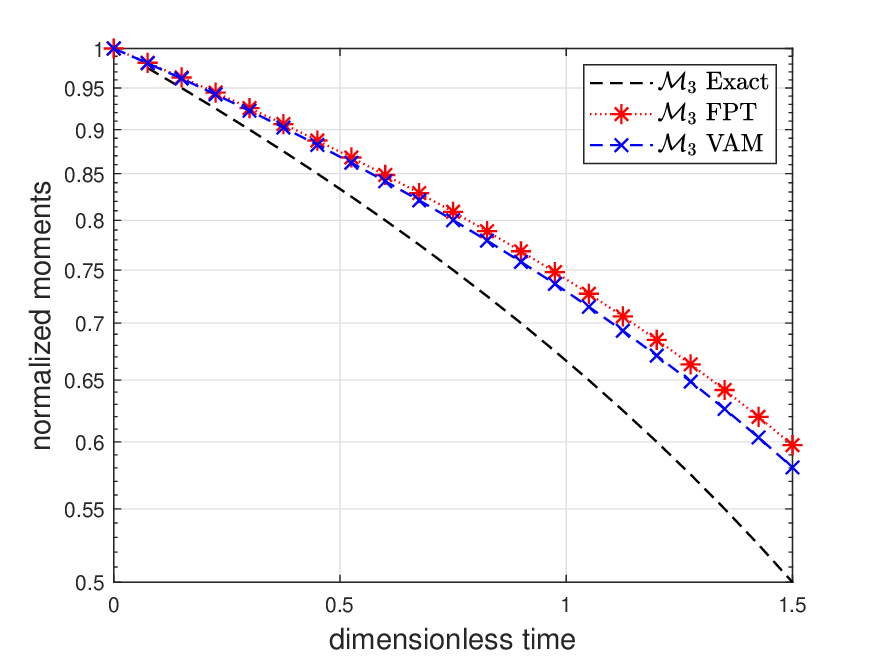}
		\caption{Third moment}
		\label{B_f2_31}
	\end{subfigure}
	\caption{Different order moments for test case \ref{ex2}. }
	\label{B1_f122}
\end{figure}
\begin{table}[htbp!]
	\centering
	\begin{tabular}{|c|  c |c |c|c|c|c| }     
		\hline
		Method & \multicolumn{3}{c| }{VAM} & \multicolumn{3}{c|}{FPT}\\ [0.5ex]
		\hline
		Time & $E_{\mathcal{M}_0}$ & $E_{\mathcal{M}_1}$ & $E_{\mathcal{M}_2}$ & $E_{\mathcal{M}_0}$ & $E_{\mathcal{M}_1}$ & $E_{\mathcal{M}_2}$  \\ [0.5ex]
		\hline 
		0.3 & 2.57$\times 10^{-11}$ & 1.45$\times 10^{-11}$  & 1.21$\times 10^{-2}$ & 2.57$\times 10^{-11}$ &  1.45$\times 10^{-11}$ & 1.01$\times 10^{-2}$\\ [0.5ex]
		\hline 
		0.6 & 4.94$\times 10^{-11}$ &  1.42$\times 10^{-11}$ & 2.07$\times 10^{-2}$& 4.94$\times 10^{-11}$ & 1.42$\times 10^{-11}$  & 2.53$\times 10^{-2}$ \\ [0.5ex]
		\hline 
		0.9 & 4.63$\times 10^{-11}$ &  6.63$\times 10^{-11}$ & 3.22$\times 10^{-2}$& 4.63$\times 10^{-11}$ & 6.63$\times 10^{-11}$  & 4.02$\times 10^{-2}$ \\ [0.5ex]
		\hline 
		1.2  & 3.47$\times 10^{-10}$ &  2.26$\times 10^{-10}$ & 4.48$\times 10^{-2}$& 3.47$\times 10^{-10}$ & 2.26$\times 10^{-10}$  & 5.71$\times 10^{-2}$\\ [0.5ex]
		\hline 
		1.5 & 7.72$\times 10^{-10}$  &   3.19$\times 10^{-10}$  & 5.90$\times 10^{-2}$& 7.72$\times 10^{-10}$ & 3.19$\times 10^{-10}$ & 7.67$\times 10^{-2}$ \\ [0.5ex]
		\hline 
		
	\end{tabular}
	\caption{Error analysis of different order moments for test case \ref{ex2}.}
	\label{t_2_2}
\end{table}	
\subsection{Experimental order of convergence}

The theoretical results demonstrating the order of convergence of  the new scheme VAM are examined against  the numerical estimated order of convergence (EOC) for analytically tractable kernels \cite{MR4566091} using the following formula:
\begin{align}
	\text{EOC}=\ds \frac{\ds \ln\left(\ds\frac{E_I}{E_{2I}}\right)}{\ds \ln(2)},
\end{align}
where $E_I$ is the $L^1$ error norm calculated by 
\begin{align}
	E_I:=\sum_{j=1}^{I}|N_j-\hat{N}_j|=||N-\hat{N}||\quad\text{and}\quad L^1_{\text{error}}:=\ds \frac{||N-\hat{N}||}{||N||}=\ds \frac{E_I}{||N||}.
\end{align}
Here, the subscript $I$ corresponds to the degrees of freedom. 
The EOC is calculated for all the test cases considered  for uniform, nonuniform, locally-uniform and random grids (see Table \ref{t1_EOC} and \ref{t2_EOC}). For calculating the EOC, computation size domain $[10^{-9},1]$ is considered,  which is discretized into 30 grids initially. These grids are doubled in each iteration and five iterations are performed.  EOC is calculated over four different grid types, namely uniform,  nonuniform, locally-uniform and random.  For all simulations end time considered as $t=1$.

\begin{table}[H]
	\centering
	\caption{$L^1$ error and EOC for test case \ref{ex1} for VAM.}
	\label{t1_EOC}	
	\begin{tabular}{ |c |c c|c c|c c|c c|}
		\hline
		& \multicolumn{2}{c|}{Nonuniform grids} & \multicolumn{2}{c|}{Uniform grids} & \multicolumn{2}{c|}{Locally-uniform grids} & \multicolumn{2}{c|}{ Random grids} \\
		\hline
		Grids & $L^1$ error  & EOC & $L^1$ error  & EOC & $L^1$ error  & EOC  & $L^1$ error  & EOC \\
		\hline
		30 & 0.97$\times 10^{-1}$   & 0 & 3.56$\times 10^{-2}$        & 0     & 8.25$\times 10^{-1}$ & 0 & 3.59$\times 10^{-1}$ & 0  \\
		60 & 4.55$\times 10^{-1}$   & 1.16   & 1.71$\times 10^{-2}$ & 1.05 & 2.31$\times 10^{-1}$ & 1.18 & 2.28$\times 10^{-1}$ & 0.57\\
		120 & 2.03$\times 10^{-1}$   & 1.11& 8.36$\times 10^{-3}$   & 1.03 & 1.07$\times 10^{-1}$ & 1.10 & 1.53$\times 10^{-1}$ & 0.65\\
		240 & 9.34$\times 10^{-2}$   & 1.09 & 4.13$\times 10^{-3}$   & 1.01 & 4.91$\times 10^{-2}$ & 1.11 & 9.35$\times 10^{-2}$ & 0.72  \\
		480 & 4.45$\times 10^{-2}$   & 1.06 & 2.05$\times 10^{-3}$   & 1.01  & 2.31$\times 10^{-2} $ & 1.08 & 5.81$\times 10^{-2}$ & 0.87\\
		\hline
	\end{tabular}
\end{table}

\begin{table}[H]
	\centering
	\caption{$L^1$ error and EOC for test case \ref{ex2} for VAM.}
	\label{t2_EOC}	
	\begin{tabular}{ |c |c c|c c|c c|c c|}
		\hline
		& \multicolumn{2}{c|}{Nonuniform grids} & \multicolumn{2}{c|}{Uniform grids} & \multicolumn{2}{c|}{Locally-uniform grids} & \multicolumn{2}{c|}{ Random grids} \\
		\hline
		Grids & $L^1$ error  & EOC & $L^1$ error  & EOC & $L^1$ error  & EOC  & $L^1$ error  & EOC \\
		\hline
		30 & 6.21$\times 10^{-1}$   & 0 & 6.32$\times 10^{-2}$        & 0     & 6.66$\times 10^{-1}$ & 0 & 6.32$\times 10^{-2}$ & 0  \\
		60 & 3.97$\times 10^{-1}$   & 0.65   & 3.19$\times 10^{-2}$ & 1.00 & 2.42$\times 10^{-1}$ & 0.76 & 5.34$\times 10^{-2}$ & 0.67\\	
		120 & 2.24$\times 10^{-1}$   & 0.82 & 1.59$\times 10^{-2}$   & 1.00 & 1.42$\times 10^{-1}$ & 0.84 & 2.64$\times 10^{-2}$ & 0.76\\
		240 & 1.25$\times 10^{-1}$   & 0.84& 8.07$\times 10^{-3}$   & 0.99 & 7.91$\times 10^{-2}$ & 0.94 & 1.41$\times 10^{-2}$ & 0.85  \\
		480 & 6.74$\times 10^{-2}$   & 0.89 & 4.03$\times 10^{-3}$   & 0.99  & 4.12$\times 10^{-2} $ & 0.97 & 1.02$\times 10^{-2}$ & 0.89\\
		\hline
	\end{tabular}
\end{table}
Table \ref{t1_EOC} presents the $L^1$ error  and EOC data for different grid types for Example \ref{ex1}. It can be observed from the data that VAM is first order accurate irrespective of grid choice. For the first three type of grids, the convergence order is achieved rapidly, however for random grids, it is convergence is comparatively slow to other grids. It is interesting to note that for all the grids, $L^1$ error  is decreasing by half when the number of grids are doubled. For Example \ref{ex2}, Table \ref{t2_EOC} contains the data for  $L^1$ error  and EOC. Similar to previous case, first order of convergence is noticed for all four grid types.

Therefore, by numerical experiments we can say that the VAM performs accurately and predict the convergence results with better accuracy in comparison with existing FPT \cite{MR4566091}. 

\begin{Remark}\label{rem_4_1}
	Here, we verify the claim made in Remark \ref{rem_3_4}. Consider  $\beta(x|y;z)=\ds \frac{12x}{y^2}\left(1-\frac{x}{y}\right)$ and calculate  the EOC for VAM over uniform,  nonuniform, locally-uniform and random grids. Table \ref{t3_EOC} supports the claim.
	
	\begin{table}[H]
		\centering
		\caption{$L^1$ error and EOC for VAM with  $\beta(x|y;z)=\ds \frac{12x}{y^2}\left(1-\frac{x}{y}\right)$.}
		\label{t3_EOC}	
		\begin{tabular}{ |c |c c|c c|c c|c c|}
			\hline
			& \multicolumn{2}{c|}{Nonuniform grids} & \multicolumn{2}{c|}{Uniform grids} & \multicolumn{2}{c|}{Locally-uniform grids} & \multicolumn{2}{c|}{ Random grids} \\
			\hline
			Grids & $L^1$ error  & EOC & $L^1$ error  & EOC & $L^1$ error  & EOC  & $L^1$ error  & EOC \\
			\hline
			30 & 2.45$\times 10^{-2}$   & 0 & 1.06$\times 10^{-0}$        & 0     & 9.92$\times 10^{-1}$ & 0 & 2.41$\times 10^{-2}$ & 0  \\
			60 & 7.24$\times 10^{-3}$   & 1.77   & 4.14$\times 10^{-1}$ & 1.35 & 2.11$\times 10^{-1}$ & 2.23 & 1.89$\times 10^{-2}$ & 0.85\\	
			120 & 2.06$\times 10^{-3}$   & 1.85 & 1.33$\times 10^{-1}$   & 1.63 & 6.57$\times 10^{-2}$ & 1.68 & 8.39$\times 10^{-3}$ & 1.18\\
			240 & 5.76$\times 10^{-4}$   & 1.91& 3.79$\times 10^{-2}$   & 1.81 & 1.65$\times 10^{-2}$ & 1.99 & 4.25$\times 10^{-3}$ & 0.98  \\
			480 & 1.75$\times 10^{-4}$   & 1.94 & 1.01$\times 10^{-2}$   & 1.91  & 4.21$\times 10^{-3} $ & 1.98 & 2.87$\times 10^{-3}$ & 1.08\\
			\hline
		\end{tabular}
	\end{table}
\end{Remark}


\subsection{Two-dimensional case}

\begin{example}
	Consider multi-dimensional collisional  nonlinear breakage equation with monodispresed  initial condition with  collisional kernel $ \ds \K(x_1,x_2,y_1,y_2)=x_1x_2y_1y_2$ and breakage function as 
	$(i)$ $\ds \beta(x_1,x_2|y_1,y_2;z_1,z_2)=\frac{4}{y_1y_2}$ and
	$(ii)$ $\ds \beta(x_1,x_2|y_1,y_2;z_1,z_2)=\frac{2}{y_1y_2}$.
\end{example}
\noindent Case $(i)$: For this case, exact zeroth  $\mathcal{M}_{0,0}(t)$ and first moment $\mathcal{M}_{1,1}(t)$ can be calculated. However, the  total amount of property with respect to $x_1$ and $x_2$ or first cross moments $\mathcal{M}_{0,1}(t)$ and $\mathcal{M}_{1,0}(t)$ are unavailable. In Figure \ref{B_f3_1} zeroth moment and hypervolume are plotted and compared against the exact values. An excellent prediction of both the integral properties can be noticed. The total of first order mixed moments is showcased in Figure \ref{B_f3_2}. The prediction of average hypervolume depends on the zeroth and hypervolume as it is defined as their ratio. A high accurate approximation of averaged value is anticipated and shown in Figure \ref{B_f3_3}.

\begin{figure}[H]
	\begin{subfigure}{.325\textwidth}
		\centering
		\includegraphics[width=1.00\textwidth]{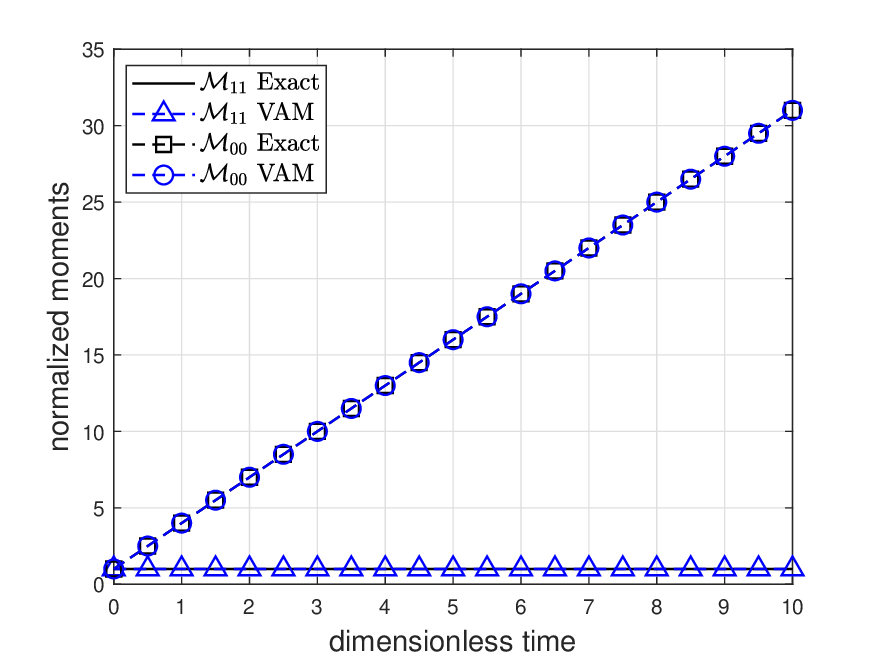}
		\caption{Zeroth and cross moment}
		\label{B_f3_1}
	\end{subfigure}
	\begin{subfigure}{.325\textwidth}
		\centering
		\includegraphics[width=1.00\textwidth]{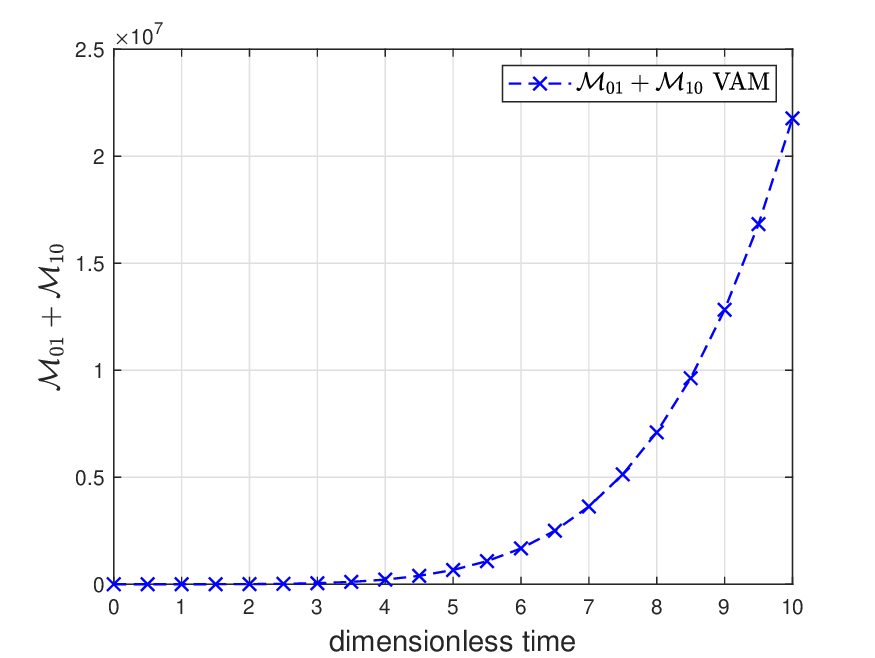}
		\caption{ First Moment}
		\label{B_f3_2}
	\end{subfigure}
	\begin{subfigure}{.325\textwidth}
		\centering
		\includegraphics[width=\textwidth]{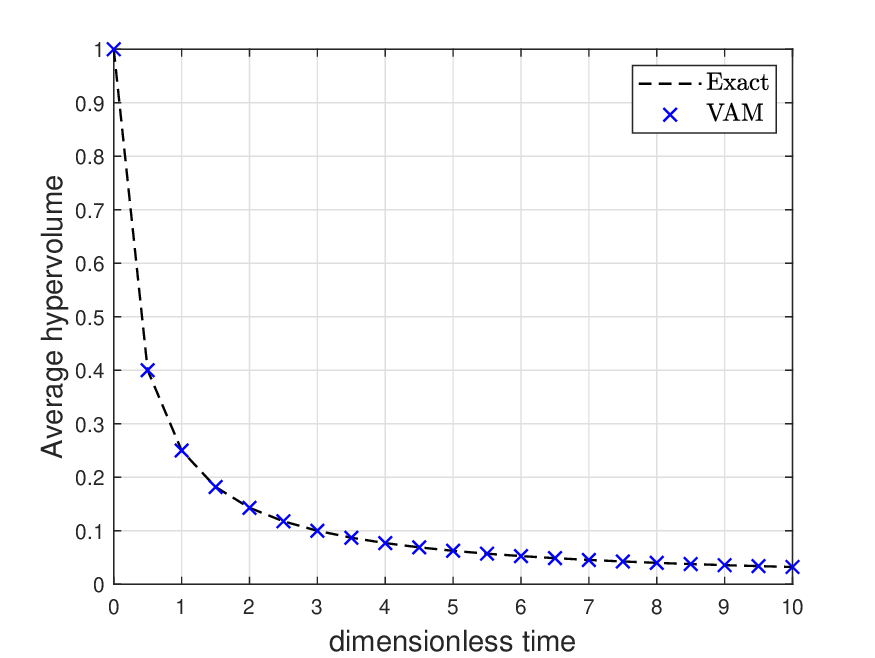}
		\caption{Average hypervolume }
		\label{B_f3_3}
	\end{subfigure}
	\caption{Comparison of different order moments for multi-dimensional model.}
	\label{B_f3}
\end{figure}

\noindent Case $(ii)$: On contrast to previous case, in this case first order mixed moments are conserved and  zeroth and cross moments are calculated exactly. Figure \ref{B_f4_1} presents the zeroth  and first cross moment. The mixed moments are conserved,  when VAM is used as depicted in Figure \ref{B_f4_2}. Finally, average hypervolume is plotted in Figure \ref{B_f4_3}.

\begin{figure}[H]
	\begin{subfigure}{.325\textwidth}
		\centering
		\includegraphics[width=1.00\textwidth]{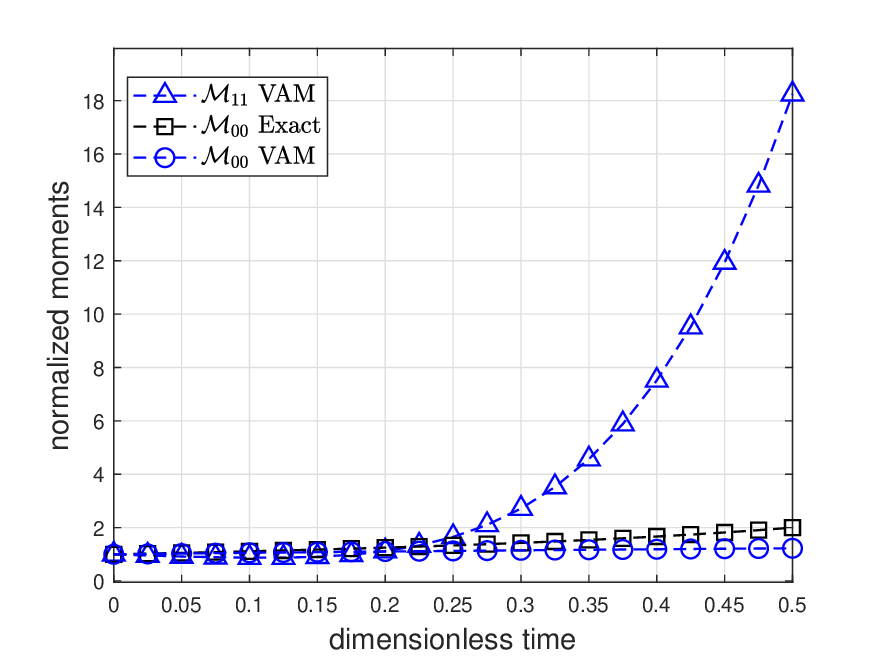}
		\caption{Zeroth and cross moemnt}
		\label{B_f4_1}
	\end{subfigure}
	\begin{subfigure}{.325\textwidth}
		\centering
		\includegraphics[width=1.00\textwidth]{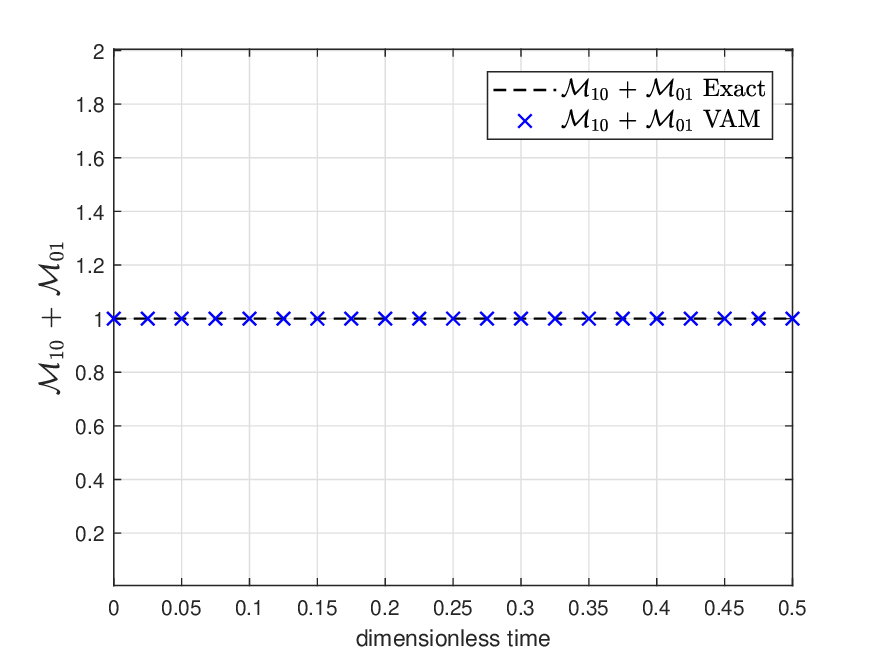}
		\caption{First Moment}
		\label{B_f4_2}
	\end{subfigure}
	\begin{subfigure}{.325\textwidth}
		\centering
		\includegraphics[width=\textwidth]{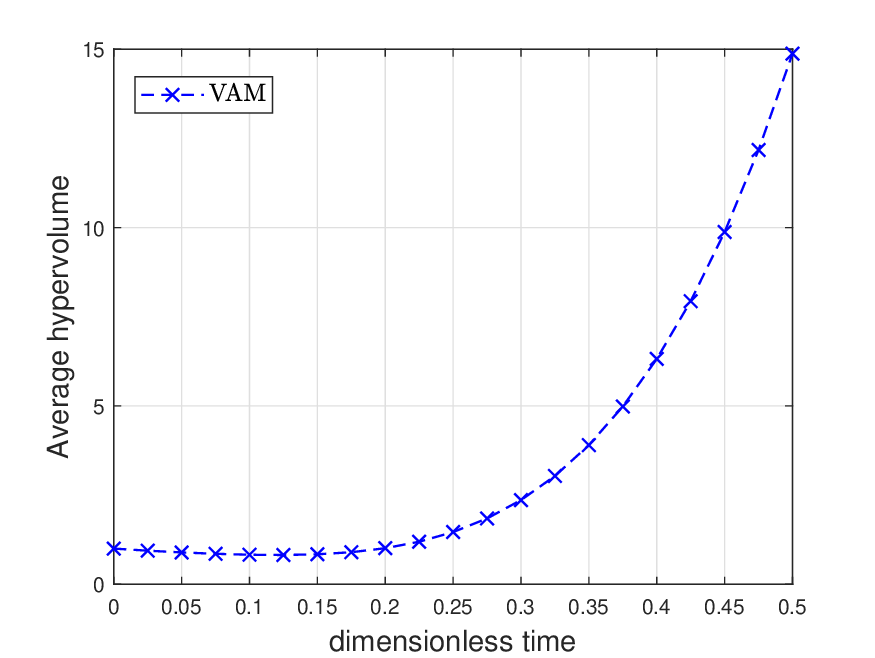}
		\caption{Average hypervolume }
		\label{B_f4_3}
	\end{subfigure}
	\caption{Comparison of different order moments for multi-dimensional model. }
	\label{B_f4}
\end{figure}


\section{Conclusion}\label{sec_6}
This study provided a comprehensive stability and consistency analysis of the  new scheme volume average method for the collisional nonlinear breakage equation. It  documented  the mathematical and numerical results in comparison with the existing scheme fixed pivot techniques. It is noteworthy that the new scheme  achieves first order convergence rate over random grids, whereas the existing scheme fixed pivot techniques  are inconsistent over random grids.  Moreover, it is shown that the new scheme is first order convergence rate over oscillatory grids. Furthermore, our considered equation is extended for two-dimensional case over rectangular grids. 
Most importantly, numerical experiments have been performed successfully for one and two-dimensional  collisional  nonlinear breakage equation with two test examples for each case. Additionally, we explored a detailed error estimation for the new scheme.

\section{Acknowledgement}  PK thanks Ministry of Education (MoE), Govt.\ of India for her funding support during her PhD program. AG thanks University Grants Commission (UGC), Govt. of India for  their funding support during her PhD program. JS thanks to Science and Engineering Research Board (SERB), Govt. of India for their support through Core Research Grant (CRG / 2023 / 001483) during this work.

\section{Conflict of interest statement} All the authors certify that
they do not have any conflict of interest.

\section{Data availability statement } The data that support the findings of this study is available from the corresponding author upon reasonable request.
\bibliographystyle{amsplain} 
\bibliography{bibliography_cat.bib}

\begin{appendices}
	\section{ Derivation of  equation \eqref{2_21} in   Proposition \ref{prop_2_1}: }\label{appen_A}
	We have	to prove the expression 
	\begin{align}
		&\sum_{i=1}^{I}x_i\hat{B}_{i-1}\lambda_i^-(\bar{v}_{i-1})H(\bar{v}_{i-1}-x_{i-1}) +\sum_{i=1}^{I}x_i\hat{B}_{i}
		\lambda_i^-(\bar{v}_{i})H(x_i-\bar{v}_{i})+ \sum_{i=1}^{I}x_i\hat{B}_{i}\lambda_i^+(\bar{v}_{i})H(\bar{v}_{i}-x_{i})
		\notag \\&	+\sum_{i=1}^{I}x_i\hat{B}_{i+1} \lambda_i^+(\bar{v}_{i+1})H(x_{i+1}-\bar{v}_{i+1})=\sum_{i=1}^{I}\bar{v}_i\hat{B}_{i}
	\end{align}
	\begin{proof}
		\begin{align}
			\frac{\dd}{\dd t}\sum_{i=1}^{I}x_i\hat{N}_i(t)=&\sum_{i=1}^{I}x_i\hat{B}_{i-1}\lambda_i^-(\bar{v}_{i-1})H(\bar{v}_{i-1}-x_{i-1}) +\sum_{i=1}^{I}x_i\hat{B}_{i}
			\lambda_i^-(\bar{v}_{i})H(x_i-\bar{v}_{i})\notag \\ 
			&+ \sum_{i=1}^{I}x_i\hat{B}_{i}\lambda_i^+(\bar{v}_{i})H(\bar{v}_{i}-x_{i})+\sum_{i=1}^{I}x_i\hat{B}_{i+1} \lambda_i^+(\bar{v}_{i+1})H(x_{i+1}-\bar{v}_{i+1})\notag\\&-\sum_{i=1}^{I}\sum_{j=1}^{I}x_i\K(x_i,x_j)\hat{ N}_i(t)\hat{ N}_j(t).
		\end{align}
		If $v_i>x_i$, then the above equation is written as
		\begin{align}
			\frac{\dd}{\dd t}\sum_{i=1}^{I}x_i\hat{N}_i(t)=&\sum_{i=1}^{I}x_i\hat{B}_{i-1}\lambda_i^-(\bar{v}_{i-1})
			+ \sum_{i=1}^{I}x_i\hat{B}_{i}\lambda_i^+(\bar{v}_{i})-\sum_{i=1}^{I}\sum_{j=1}^{I}x_i\K(x_i,x_j)\hat{ N}_i(t)\hat{ N}_j(t)\notag\\
			=&\sum_{i=1}^{I}x_{i+1}\hat{B}_{i}\lambda_{i+1}^-(\bar{v}_{i})
			+ \sum_{i=1}^{I}x_i\hat{B}_{i}\lambda_i^+(\bar{v}_{i})-\sum_{i=1}^{I}\sum_{j=1}^{I}x_i\K(x_i,x_j)\hat{ N}_i(t)\hat{ N}_j(t)\notag\\
			=&\sum_{i=1}^{I}\left(x_{i+1}\lambda_i^-(\bar{v}_{i})
			+x_i\lambda_i^+(\bar{v}_{i})\right)\hat{B}_{i}-\sum_{i=1}^{I}\sum_{j=1}^{I}x_i\K(x_i,x_j)\hat{ N}_i(t)\hat{ N}_j(t)\notag\\
			=&\sum_{i=1}^{I}\left(x_{i+1}\frac{\bar{v}_i-x_{i}}{x_{i+1}-x_i}
			+x_i\frac{\bar{v}_i-x_{i+1}}{x_{i}-x_{i+1}}\right)\hat{B}_{i}-\sum_{i=1}^{I}\sum_{j=1}^{I}x_i\K(x_i,x_j)\hat{ N}_i(t)\hat{ N}_j(t)\notag\\
			=&\sum_{i=1}^{I}\bar{v}_i\hat{B}_{i}-\sum_{i=1}^{I}\sum_{j=1}^{I}x_i\K(x_i,x_j)\hat{ N}_i(t)\hat{ N}_j(t).
		\end{align}
		By	similar argument for $v_i<x_i$, we have
		\begin{align}
			\frac{\dd}{\dd t}\sum_{i=1}^{I}x_i\hat{N}_i(t)=\sum_{i=1}^{I}\bar{v}_i\hat{B}_{i}-\sum_{i=1}^{I}\sum_{j=1}^{I}x_i\K(x_i,x_j)\hat{ N}_i(t)\hat{ N}_j(t).
		\end{align}
	\end{proof}
\end{appendices}

\end{document}